\tikzset{snake it/.style={decorate, decoration=snake}}
\theoremstyle{plain}
\newtheorem{Prop}{Proposition}[section]
\newtheorem{Thm}[Prop]{Theorem}
\newtheorem*{Thm*}{Theorem}
\newtheorem{Lem}[Prop]{Lemma}
\newtheorem{Cor}[Prop]{Corollary}
\theoremstyle{definition}
\newtheorem{Def}[Prop]{Definition}
\theoremstyle{remark}
\newtheorem{Rem}[Prop]{Remark}
\def\vint_#1{\mathchoice%
          {\mathop{\kern 0.2em\vrule width 0.6em height 0.69678ex
depth -0.58065ex
                  \kern -0.8em \intop}\nolimits_{\kern -0.4em#1}}%
          {\mathop{\kern 0.1em\vrule width 0.5em height 0.69678ex
depth -0.60387ex
                  \kern -0.6em \intop}\nolimits_{#1}}%
          {\mathop{\kern 0.1em\vrule width 0.5em height 0.69678ex
              depth -0.60387ex
                  \kern -0.6em \intop}\nolimits_{#1}}%
          {\mathop{\kern 0.1em\vrule width 0.5em height 0.69678ex
depth -0.60387ex
                  \kern -0.6em \intop}\nolimits_{#1}}}
\def\vintslides_#1{\mathchoice%
          {\mathop{\kern 0.1em\vrule width 0.5em height 0.697ex depth -0.581ex
                  \kern -0.6em \intop}\nolimits_{\kern -0.4em#1}}%
          {\mathop{\kern 0.1em\vrule width 0.3em height 0.697ex depth -0.604ex
                  \kern -0.4em \intop}\nolimits_{#1}}%
          {\mathop{\kern 0.1em\vrule width 0.3em height 0.697ex depth -0.604ex
                  \kern -0.4em \intop}\nolimits_{#1}}%
          {\mathop{\kern 0.1em\vrule width 0.3em height 0.697ex depth -0.604ex
                  \kern -0.4em \intop}\nolimits_{#1}}}
\newcommand{\aveint}[2]{\mathchoice
          {\mathop{\kern 0.2em\vrule width 0.6em height 0.69678ex
depth -0.58065ex
                  \kern -0.8em \intop}\nolimits_{\kern -0.45em#1}^{#2}}%
          {\mathop{\kern 0.1em\vrule width 0.5em height 0.69678ex
depth -0.60387ex
                  \kern -0.6em \intop}\nolimits_{#1}^{#2}}%
          {\mathop{\kern 0.1em\vrule width 0.5em height 0.69678ex
depth -0.60387ex
                  \kern -0.6em \intop}\nolimits_{#1}^{#2}}%
          {\mathop{\kern 0.1em\vrule width 0.5em height 0.69678ex
depth -0.60387ex
                  \kern -0.6em \intop}\nolimits_{#1}^{#2}}}
\DeclareMathOperator{\dv}{div}
\newcommand{\set}[2]{\big\{#1: #2\big\}}
\newcommand{\bigset}[2]{\bigg\{#1: #2\bigg\}}
\newcommand{\mns}{\setminus}
\newcommand{\sdiff}{\triangle}
\newcommand{\N}{\mathbb{N}}
\newcommand{\R}{\mathbb{R}}
\newcommand{\del}{\partial}
\newcommand{\eps}{\varepsilon}
\newcommand{\inv}[1]{{#1}^{-1}}
\newcommand{\dx}{\, dx}
\newcommand{\loc}{\text{\rm loc}}
\newcommand{\Om}{\Omega}
\newcommand{\inp}[2]{\langle\, #1,#2 \,\rangle}
\newcommand{\gr}{\nabla}
\newcommand{\wto}{\rightharpoonup}
\newcommand{\lap}{\Delta}
\newcommand{\D}{\mathscr{D}}
\newcommand{\M}{\mathcal{M}}
\newcommand{\A}{\mathscr{A}}
\newcommand{\Rsp}[1]{\mathscr{R}_{p,\, #1}}
\newcommand{\gto}{\xrightarrow{\gamma_p}}
\newcommand{\Gto}{\xrightarrow{\Gamma_p}}
\newcommand{\pcap}{\text{Cap}_p}
\title[The second eigenvalue of the $p\,$-Laplacian]
 {A variational characterisation of the second eigenvalue of the $p\,$-Laplacian on quasi open sets} 
\author[Nicola Fusco]{Nicola Fusco}
\address[Nicola Fusco]{Department of Mathematics and Application, University of Naples `Federico II'}
\email{n.fusco@unina.it}
\author[Shirsho Mukherjee]{Shirsho Mukherjee}
\address[Shirsho Mukherjee]{Department of Mathematics and Statistics, University of Jyv\"askyl\"a}
\email{shirsho.s.mukherjee@jyu.fi}
\author[Yi Ru-Ya Zhang]{Yi Ru-Ya Zhang}
\address[Yi Ru-Ya Zhang]{Hausdorff Centre for Mathematics, University of Bonn}
\email{yizhang@math.uni-bonn.de}
\thanks{2010 \textit{Mathematics Subject Classification.}  
Primary 35P30, 35J20, 58E30, 49Q10.  \\
\textit{Key words and Phrases:} nonlinear eigenvalue problem, $p$-laplacian, 
variational methods, shape optimization.}
\begin{document}
\maketitle

\begin{abstract}
In this article, we prove a minimax characterisation of the second eigenvalue of 
the $p$-Laplacian operator on $p$-quasi open sets, using a construction based on minimizing movements. This leads also to an  
existence theorem for spectral functionals depending on the first 
two eigenvalues of the $p$-Laplacian. 
\end{abstract}


\section{Introduction}\label{sec:introduction}

The Dirichlet eigenvalues of the $p$-Laplacian operator are defined as the  numbers $\lambda>0$ for which the following Dirichlet problem  
\begin{equation}\label{eq:plap eigen} 
\begin{cases}
-\dv(|\gr u|^{p-2}\gr u)  = \lambda |u|^{p-2}u \ \ &\text{in}\  \Om \\
 \qquad \qquad \qquad \quad\  u =0 \ \ &\text{in}\ \del\Om, 
\end{cases}
\end{equation}
admits a 
non-zero weak solution $u\in W^{1,\,p}_0(\Om)$. Here 
$\Om\subset \R^n$ is an open set of finite measure and $1<p<\infty$. In fact, the 
eigenvalues are the critical values of the Rayleigh quotient 
$$
\mathcal R_\Om(w)=\frac{\displaystyle\int_\Om |\gr w|^p\dx}{\displaystyle\int_\Om |w|^p\dx}
$$
and the corresponding weak solutions of equation \eqref{eq:plap eigen} are 
the critical points of $\mathcal R_\Om(w)$ 
among all non-zero functions in $w\in W^{1,\,p}_0(\Om)$. While the 
first eigenvalue $\lambda_1(\Om)$ is defined as the minimum value of 
$\mathcal R_\Om(w)$, not much is known on higher order eigenvalues when $p\not=2$. One way to obtain them is by using the so called {\it Krasnoselskii's genus} $\gamma(\M)$ of a 
set $\M \subset W^{1,\,p}_0(\Om)$. In fact, it was shown in \cite{Gar-Per} that, 
denoting by $\Sigma_k,\ k= 1,2,\ldots$, the collection of all symmetric subsets 
$\M$ contained in $W^{1,\,p}_0(\Om)$ with $\gamma(\M) \geq k$, the 
numbers
\begin{equation}\label{eq:genk}
\lambda_k(\Om)=\inf_{\M\in \Sigma_k} \Big[\sup_{u\in\M} \mathcal R_\Om(w)\Big]
\end{equation}
form an increasing sequence of eigenvalues. It is not known whether all the 
eigenvalues of the $p$-Laplacian are of this form if $p\not=2$. However, it was proved by 
Anane-Tsouli \cite{An-Ts} that, given a bounded and connected open set $\Om$, if 
$\lambda_1(\Om)$ and $\lambda_2(\Om)$ are defined as in \eqref{eq:genk}, then 
$\lambda_1(\Om)$ is the smallest eigenvalue and there are no other eigenvalues 
in the interval $(\lambda_1(\Om),\lambda_2(\Om))$, see also \cite{Juu-Lind}.

Another variational characterisation of $\lambda_2(\Om)$ was given by 
Cuesta-de Figueiredo-Gossez in \cite{Cue-Fig-Gos}, who proved that for a bounded open and connected set $\Om$ we have 
\begin{equation}\label{eq:var char}
  \lambda_2(\Om)\, =\inf_{\gamma\in\Gamma(u_1,-u_1)}
  \bigg[\,\max_{w\in\gamma([0,1])} \int_\Omega |\gr w|^p \dx\,\bigg], 
 \end{equation}
where $u_1$ is the first nonnegative eigenfunction with 
$\|u_1\|_{L^p(\Om)}=1$ and $\Gamma(u_1,-u_1)$ is the family of all continuous 
maps from $[0,1]$ to $\M_p(\Om) = \set{u\in W^{1,\,p}_0(\Om)}{\|u\|_{L^p(\Om)}=1}$ with endpoints $u_1$ and~$-u_1$. Later on, it was shown by  
Brasco-Franzina \cite{Bra-Fra--hks} that \eqref{eq:var char} still holds if $\Om$ is any open set of finite measure, not necessarily connected. Finally, as pointed to us by L. Brasco, a different variational characterization of  $\lambda_2(\Om)$ can be obtained by combining a result proved in \cite{DR} with the argument used by Brasco and Franzina in the proof of \cite[Th. 4.2]{BF}:
\begin{equation}\label{varchar3}
\lambda_2(\Omega)=\inf_{f\in{\mathcal C}_{odd}(\mathbb{S}^1,\M_p(\Om))}  \bigg[\,\max_{u\in Im(f)}\int_\Omega |\gr w|^p \dx\,\bigg],
\end{equation}
where ${\mathcal C}_{odd}(\mathbb{S}^1,\M_p(\Om))$ is the set of continuous and odd maps from $\mathbb{S}^1$ to $\M_p(\Om)$.

In this paper, we study the properties of the first two eigenvalues of the 
$p$-Laplacian in a $p$-quasi open set $A$. Beside being of its own interest, this study is motivated by the existence Theorem~\ref{thm:mainthm2} below. Recall that $A\subset\R^n$ is 
$p$-quasi open if there exists a $p$-quasi continuous nonnegative function $u\in W^{1,\,p}(\R^n)$ such that 
$A = \{u>0\}$ (see Section \ref{sec:qfopen} for more details). In order to deal with the $p$-Laplacian on $p$-quasi open sets we need to introduce the $p$-fine topology, which turns out to be an important tool for our study. In particular some basic properties of Sobolev functions on quasi open sets are given in Theorem~\ref{thBB} and Lemmas~\ref{lem:quasi res} and \ref{sob}, while the strong minimum principle for the $p$-Laplacian on $p$-quasi open sets is stated in Theorem~\ref{thm:strong min}.

For bounded 
open sets, it is known (see \cite{Lind2}) that if the first eigenvalue is simple, then it is isolated. Here we prove that the same holds in the 
framework of $p$-quasi open sets, see Proposition~\ref{prop: first eigen prop}. This fact  turns out to be useful in the proof of our main result, which states that  formula \eqref{eq:var char} is still true when $\Om$ is replaced by a $p$-quasi open set $A$ of finite measure.
\begin{Thm}\label{thm:mainthm1}
Let $A\subset\R^n$ be a $p$-quasi open set of finite measure, let $u_1 \in W^{1,\,p}_0(A)$ be a normalized 
 eigenfunction of $\lambda_1(A)$ and let  
 $\Gamma(u_1,-u_1)
 =\set{\gamma\in C\big([0,1],\M_p(A)\big)}{\gamma(0) = u_1,\ \gamma(1) = -u_1}$. 
 Then
 \begin{equation}\label{thm11}
  \lambda_2(A)\, =\min_{\gamma\in\Gamma(u_1,-u_1)}
  \bigg[\,\max_{w\in\gamma([0,1])} \int_A |\gr w|^p \dx\,\bigg]. 
 \end{equation}
   \end{Thm}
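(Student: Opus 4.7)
The plan follows the mountain-pass strategy of Cuesta--de Figueiredo--Gossez, with the adaptations to the quasi-open setting supplied by the preliminary results of the paper. Write $c(A)$ for the right-hand side of \eqref{thm11}. I would prove the two inequalities $c(A) \leq \lambda_2(A)$ and $c(A) \geq \lambda_2(A)$ separately.

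For the upper bound, pick an eigenfunction $u_2$ of $\lambda_2(A)$ with $\|u_2\|_{L^p(A)}=1$. Since $\lambda_1(A)$ is simple and isolated by Proposition~\ref{prop: first eigen prop}, and since the strong minimum principle (Theorem~\ref{thm:strong min}) forces any sign-definite eigenfunction to be a multiple of $u_1$, the eigenfunction $u_2$ must change sign. Writing $u_2 = u_2^+ - u_2^-$ and $A^{\pm} = \{\pm u_2 > 0\}$, testing \eqref{eq:plap eigen} against $u_2^{\pm}$ shows that $\tilde u_2^{\pm} := u_2^{\pm}/\|u_2^{\pm}\|_{L^p}$ is a first eigenfunction on $A^{\pm}$ with $\lambda_1(A^{\pm}) = \lambda_2(A)$. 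I would then build the path from $u_1$ to $-u_1$ in three pieces: (a) from $u_1$ to $\tilde u_2^+$; (b) a middle arc $\gamma(t) = \alpha(t)\tilde u_2^+ - \beta(t)\tilde u_2^-$ with $(\alpha(t),\beta(t))$ tracing the unit $\ell^p$-sphere from $(1,0)$ to $(0,1)$, the disjoint supports of $\tilde u_2^{\pm}$ forcing $\mathcal R_A \equiv \lambda_2(A)$ identically on this arc; (c) the mirror of (a), from $-\tilde u_2^-$ to $-u_1$. The segments (a) and (c) require a continuous deformation joining two first eigenfunctions (one of $A^{\pm}$, one of $A$) that stays inside the sublevel set $\{\mathcal R_A \leq \lambda_2(A)\}$; this is where the minimizing-movements scheme advertised in the abstract is used, producing a curve along which the Rayleigh quotient is nonincreasing.

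For the lower bound, I would run a mountain-pass / deformation argument for $\mathcal R_A$ on the symmetric constraint manifold $\M_p(A)$, showing that $c(A)$ is a critical value and hence an eigenvalue of the $p$-Laplacian on $A$. The Palais--Smale condition on $\M_p(A)$ follows from the Sobolev machinery on quasi-open sets (Theorem~\ref{thBB}, Lemmas~\ref{lem:quasi res} and~\ref{sob}) together with a standard concentration argument. To separate $c(A)$ from $\lambda_1(A)$, I would exploit the isolation of $\lambda_1$: small sublevels $\{\mathcal R_A \leq \lambda_1(A)+\delta\}$ consist of two disjoint connected components (neighbourhoods of $\pm u_1$), so no admissible path can remain at level $\lambda_1$ throughout, giving $c(A) > \lambda_1(A)$. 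The analogue of the Anane--Tsouli gap in the quasi-open setting---again a consequence of Proposition~\ref{prop: first eigen prop} and Theorem~\ref{thm:strong min}---then forces the eigenvalue $c(A)$ to satisfy $c(A) \geq \lambda_2(A)$.

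The main obstacle, in my view, is the deformation step on the quasi-open set $A$. Classical pseudo-gradient flow constructions on $\M_p(A)$ rely on regularity of the sublevel sets of $\mathcal R_A$ that is not at hand when $A$ is only $p$-quasi open, and the minimizing-movements construction indicated in the abstract is precisely the tool introduced to bypass this difficulty: its role is both to deform eigenfunction configurations inside $\{\mathcal R_A \leq \lambda_2(A)\}$ in the upper bound, and to implement the quantitative deformation lemma in the mountain-pass step. Verifying that the discrete scheme yields a continuous curve in $\M_p(A)$, that the Rayleigh quotient does not increase along the iterations, and that no $L^p$-mass escapes to a boundary stratum of the constraint manifold, is the chief technical step that replaces the classical smooth flow.
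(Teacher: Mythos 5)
The lower-bound half of your plan coincides with the paper's: one shows the infimum is a critical value of the restricted energy via the mountain-pass theorem (Theorem~\ref{thm:Mpass}) once the Palais--Smale condition on $\M_p(A)$ is verified (Lemma~\ref{utile}), and with the paper's Definition~\ref{def:sec eigen} of $\lambda_2(A)$ the spectral gap is automatic from the isolation of a simple $\lambda_1(A)$, so no Anane--Tsouli-type input is needed. Note, however, that the minimizing-movements construction plays no role in this half and no quantitative deformation lemma has to be re-proved on quasi-open sets; the scheme is used only to build the connecting path in the upper bound.

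The genuine gap is in the upper bound. You assume $\lambda_1(A)$ is simple and assert that every second eigenfunction $u_2$ must change sign; both fail for general quasi-open (hence possibly disconnected) $A$, which is the setting of the theorem. If $A$ is the union of two disjoint balls with $\lambda_1(B_1)<\lambda_1(B_2)<\lambda_2(B_1)$, then $\lambda_1(A)$ is simple but $u_2$ is the positive first eigenfunction of $B_2$, one of $u_2^{\pm}$ vanishes identically, and your middle arc degenerates. Worse, even when $u_2$ changes sign in $A$ but vanishes on $U:=\{u_1>0\}$ (three disjoint balls, two of them congruent), the functions $u_2^{+}$ and $u_2^{-}$ can themselves be eigenfunctions of $A$, and Lemma~\ref{lem:path} explicitly requires the initial datum \emph{not} to be an eigenfunction: the flow started at an eigenfunction is constant and never reaches $\pm u_1$, so segments (a) and (c) of your path cannot be built. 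The correct dichotomy is whether $u_2$ changes sign \emph{in the quasi-connected component $U$} (Corollary~\ref{cor:first eigen support}): if it does, the minimum principle on $U$ shows $u_2^{\pm}$ are not eigenfunctions and your three-piece construction works; if it does not, one proves $u_2=0$ q.e.\ on $U$ and must instead use the explicit Brasco--Franzina arc \eqref{cbf} built from the disjointly supported $u_1$ and $u_2$. Finally, the case where $\lambda_1(A)$ is not simple (so $\lambda_2(A)=\lambda_1(A)$ by definition) is not covered by your argument at all and requires its own explicit paths, as in Case~2 of the paper's proof.
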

   
Note that, differently from what was known before in the case of open sets, see \eqref{eq:var char} and  \eqref{varchar3}, we prove here that the infimum at the right hand side of \eqref{thm11} is indeed attained.

A few words on the proof of the minimax formula \eqref{thm11} for a $p$-quasi open set $A$ are in order. Assuming that 
$\lambda_1(A)$ is simple, the idea is to show the existence of a   curve
$\gamma :[0,1] \to \M_p(A)$ connecting $u_1$ and $-u_1$ such that 
$$ \max_{w\in\gamma([0,1])} \int_A |\gr w|^p \dx =\lambda_2(A).$$
In the case of a bounded connected open set, the construction of this path $\gamma$ in \cite{Cue-Fig-Gos} involves a delicate use of Ekeland's variational 
principle which does not seem to work for quasi open sets. 

Therefore we have 
chosen here a completely different approach, based on De Giorgi's 
{\it minimizing movements}. 
Indeed, we construct the desired path by joining three different curves. One of them, connecting the negative and positive parts of a second eigenfunction $u_2$  is easily constructed by hands. The construction of the two other curves is where we use the minimizing movements, see Lemma~\ref{lem:path}.
Precisely, we consider the limit of a sequence of maps $v_h :A\times [0,\infty) \to \M_p(A)$, where $v_h$ are the gradient flows of the $p$-energy functional 
$$ E(u) =\int_\Om |\gr u|^p\dx $$
restricted to the manifold $\M_p(A)$, with respect to the $L^p(A)$-distance 
in $W^{1,\,p}_0(A)$. The maps $v_h$ are  time-discretized weak solutions of 
the following doubly nonlinear evolution equation
 \begin{equation}\label{eq:evol1}
 \begin{dcases*}
  \ |\del_t u|^{p-2}\del_t u=\dv(|\gr u|^{p-2}\gr u)
  + \sigma(t)|u|^{p-2}u\qquad &  in $ A\times (0,\infty)$ \\
 u \in W^{1,\,p}_0(A)\cap \M_p(A) & for all $ t\geq 0$
 \end{dcases*}
\end{equation}
with $u(0) = v_0$, where $\lambda_1(A) < E(v_0) \leq \lambda_2(A)$ and 
$v_0$ is either $u_2^+$ or $-u_2^-$. It turns out that for 
 every $h$, the energy functional $E(v_h(t))$ is strictly decreasing along the flow for $t>0$ and we have $E(v_h(t)) \to \lambda_1(A)$ and $v_h(t) \to u_1$ (or $-u_1$) in $W^{1,\,p}_0(A)$ as $t\to +\infty$. Then we show that the flows $v_h$ converge to 
 a map $v:A\times [0,\infty) \to \M_p(A)$ weakly in 
 $W^{1,\,p}((0,\infty), L^p(A))$ and strongly in $W^{1,\,p}_0(A)$ for almost every 
 $t\geq 0$, as $h\to +\infty$. Although these convergences do not imply that 
 $v$ is also a weak solution of the equation \eqref{eq:evol1}, they are enough to 
 conclude that 
 $$ E(v(t)) \, < \, \lambda_2(A)\ \ \forall\ t>0\quad 
 \text{and}\quad v(t) \to u_1\ (\text{or} -u_1)\ \text{as}\ t\to+\infty. $$

An immediate application  of the variational characterisation \eqref{thm11} of the second eigenvalue of the $p$-Laplacian   is the lower semicontinuity of 
$\lambda_2(A)$ with respect to a suitable convergence in the 
family of all $p$-quasi open subsets of a bounded open set $\Om$. In turn, 
this lower semicontinuity leads to the following existence theorem,  
where we denote by $\A_p(\Om)$ the family of all $p$-quasi open sets contained in $\Omega$.
\begin{Thm}\label{thm:mainthm2}
 Let $ f: \R^2 \to \R $ be a lower semicontinuous function, separately increasing in both variables,  and let $\Omega\subset\R^n$ be a bounded open set.  
 For every $\ 0< c \leq |\Om| $, there exists a $p$-quasi open minimizer of the   following problem 
$$
  \min\set{f\left(\lambda_1(A),\lambda_2(A)\right)}{A\in \A_p(\Om),\ |A|=c}.
$$
\end{Thm}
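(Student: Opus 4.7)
The plan is to apply the direct method of the calculus of variations. Set
\[
m := \inf\bigl\{f(\lambda_1(A),\lambda_2(A)) : A\in\A_p(\Om),\,|A|=c\bigr\}.
\]
The bound $m<\infty$ follows from testing against any admissible set (a ball of measure~$c$ inside $\Om$ when possible, otherwise a finite union of such balls), while the domain monotonicity $\lambda_i(A)\ge\lambda_i(\Om)$ for $A\sub\Om$, combined with the separate monotonicity of $f$, yields $m\ge f(\lambda_1(\Om),\lambda_2(\Om))>-\infty$. Choose a minimizing sequence $(A_h)\sub\A_p(\Om)$ with $|A_h|=c$ and $f(\lambda_1(A_h),\lambda_2(A_h))\to m$.

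The decisive step is compactness together with lower semicontinuity in the family of $p$-quasi open subsets of $\Om$. Along a subsequence (not relabeled), there is a $p$-quasi open $A\sub\Om$ with $A_h\gto A$ in the appropriate weak $\gamma_p$-convergence such that
\[
|A|\le\liminf_h|A_h|=c,\qquad \lambda_i(A)\le\liminf_h\lambda_i(A_h)\ \ (i=1,2).
\]
The lower semicontinuity of measure and of $\lambda_1$ are classical under this convergence, while the lower semicontinuity of $\lambda_2$ is precisely the consequence of Theorem~\ref{thm:mainthm1} highlighted in the introduction: the minimax representation \eqref{thm11} of $\lambda_2(A)$ as an infimum over paths joining $\pm u_1$ in $\M_p(A)$ passes to the limit from above. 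Extracting further, we may assume $\lambda_i(A_h)\to\ell_i\in[\lambda_i(\Om),+\infty]$ with $\lambda_i(A)\le\ell_i$, and the case $\ell_i=+\infty$ is excluded by coupling the monotonicity of $f$ with the fixed admissible competitor used to get $m<\infty$. With $(\ell_1,\ell_2)\in\R^2$ finite, the lower semicontinuity of $f$ there together with its separate monotonicity gives
\[
f(\lambda_1(A),\lambda_2(A))\le f(\ell_1,\ell_2)\le\liminf_h f(\lambda_1(A_h),\lambda_2(A_h))=m.
\]

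Finally, if $|A|<c$, fix $x_0\in\Om$ and set $B_r:=B(x_0,r)\cap\Om$; the continuous function $r\mapsto|B_r\setminus A|$ runs from $0$ to $|\Om\setminus A|\ge c-|A|$, so some $r$ gives $|B_r\setminus A|=c-|A|$. The set $A':=A\cup B_r\in\A_p(\Om)$ satisfies $|A'|=c$, and since $A\sub A'$, domain monotonicity together with the separate monotonicity of $f$ yields $f(\lambda_1(A'),\lambda_2(A'))\le f(\lambda_1(A),\lambda_2(A))\le m$, so $A'$ realises the minimum.

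The main obstacle in this scheme is the lower semicontinuity of $\lambda_2$ under $\gamma_p$-convergence of $p$-quasi open sets: the Krasnoselskii-genus formula \eqref{eq:genk} does not manifestly pass to the limit when $p\ne 2$, because one cannot track the genus of limits of symmetric sets, whereas the path characterization \eqref{thm11} produced by Theorem~\ref{thm:mainthm1} is visibly stable in the limit. Once this is granted, the remainder is standard direct-method bookkeeping, the only non-cosmetic subtlety being the exclusion of $\ell_i=+\infty$ and the verification that the chosen weak $\gamma_p$-convergence preserves the class of $p$-quasi open subsets of $\Om$.
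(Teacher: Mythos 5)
The decisive step of your scheme --- extracting from the minimizing sequence a subsequence that $\gamma_p$-converges \emph{to a $p$-quasi open set} $A$ --- is precisely what fails, and it is the reason the paper does not run the direct method inside $\A_p(\Om)$. The compactness available for $\gamma_p$-convergence lives in the larger space $\mathcal M_0^p(\Om)$ of capacitary measures of Dal Maso--Murat: a sequence of quasi open sets $A_h$ (identified with the measures $\infty_{A_h}$) always admits a $\gamma$-convergent subsequence, but the limit is in general a genuine measure $\mu$, not of the form $\infty_A$ for any quasi open $A$. (Think of $\Om$ perforated by a periodic array of holes of critical size: the resolvents converge weakly to the solution of $-\lap_p u+\mu|u|^{p-2}u=f$ with a nontrivial ``strange term'' $\mu$.) You flag ``the verification that the chosen weak $\gamma_p$-convergence preserves the class of $p$-quasi open subsets of $\Om$'' only at the very end, as a subtlety to be checked; in fact this verification cannot be carried out because the class is not closed, so the argument collapses at the compactness step rather than at the semicontinuity of $\lambda_2$.

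The paper's actual route is the Buttazzo--Dal Maso scheme of Section~5. One sets $F(A)=f(\lambda_1(A),\lambda_2(A))$, which is decreasing by Corollary~\ref{fili} and $\gamma_p$-lower semicontinuous by Corollary~\ref{prop:gam cont} and Proposition~\ref{prop:lsc} (the latter is where Theorem~\ref{thm:mainthm1} enters --- your identification of this ingredient is correct), and then applies Theorem~\ref{thm:existence}. That theorem sidesteps the missing compactness by minimizing an auxiliary functional $G$ over the convex set $K=\set{w\in W^{1,\,p}_0(\Om)}{w\geq 0,\ -\lap_p w\le 1}$ of torsion subsolutions, which \emph{is} compact in $L^p(\Om)$; here $G$ is the $L^p$-lower semicontinuous envelope of $J(w)=\inf\set{F(A)}{w_A\le w}$, and the nontrivial Lemmas~\ref{support} and \ref{lower semicontinuity} are exactly what is needed to prove $G(w_A)=F(A)$. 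The optimizer is then recovered as any quasi open $A_0\supset\{w_0>0\}$ with $|A_0|=c$, where $w_0$ minimizes $G$ --- this final enlargement is the one step of your proposal that does match the paper. To repair your argument you would have to either prove a compactness-plus-closure statement for $\A_p(\Om)$ that is false, or pass to the relaxed measure setting and compare $\lambda_i$ of the limit measure with $\lambda_i$ of its regular set, which is in substance what the functional $G$ accomplishes.
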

Throughout the paper we shall always assume that $1<p\leq n$, unless otherwise stated. If $p> n$  then $p$-quasi open sets reduce to open sets for which all the results contained in Sections 2 and 3 are well known. On the other hand, the results proved in Sections 4 and 5, which are new also in the context of open sets, are proved exactly in the same way regardless of the fact that $p$ is smaller or greater than $n$, see Remark~\ref{rem5}.

Finally we would like to thank the anonymous referees for the valuable comments and for suggesting us a few corrections and simplifications in the proofs.

\section{Quasi open and finely open sets}\label{sec:qfopen}

In this section we shall review the notions of $p$-capacity and $p$-quasi open sets and prove some results that will be crucial for the rest of the paper.  However, since quasi open sets do not form a topology, we  need to introduce also the related notion of 
$p$-finely open sets, which do form a topology. For all the main properties and the basic results  needed in the sequel and not proven here, we refer to \cite{Fuglede,Fuglede--quasitop,Adams-Lewis,HKM1990,Kilp-Mal} and the references therein.  \par
Finally, we warn the reader that sometimes we shall drop the notation $p$ whenever it is  clear from the context that we refer  to 
$p$-quasi or $p$-finely open sets.

Given a measurable set $E\subset\R^n$, we define its {\it $p$-capacity} by setting 
$$
   \pcap(E):= \inf\bigset{\int_{\R^n}(|u|^p+ |\gr u|^p)\dx}{u\in W^{1,\,p}(\R^n), \ u \geq 1 \
   \text{a.e. in an open set}\ U \supset E}.
$$
 Note that this definition is equivalent to the Bessel capacity $C_{1,\,p}$ defined via the Bessel kernel $G_1$ (see \cite[Rem. 1.13]{Kilp-Mal} or \cite[Sect. 2.6]{Ziemer} for more 
details). 

If a property holds everywhere except possibly in a set of zero $p$-capacity, we say that it holds {\it $p$-quasi everywhere} (and we write {\it $p$-q.e.} or {\it q.e.} for short).

\begin{Def}[(Quasi open sets)]\label{def:qopen}
{\rm A  set $A\subset\R^n$ is said to be $p$-{\it quasi open} if for every $ \eps >0 $ there exists an open set 
 $U_\eps$ such that $ \pcap(U_\eps\sdiff A) < \eps$; equivalently,  if there exists an open set $A_\eps$ such that $A \cup A_\eps$ is open and 
 $\pcap(A_\eps) < \eps$. }
\end{Def}
A function $f:A\to\R$ defined on a quasi open set $A$ is said to be {\it $p$-quasi continuous} if 
for every $\eps >0$, there exists an open set $A_\eps$ such that  $\pcap(A_\eps) < \eps$ and the restriction of $f$ to $A\setminus A_\eps$ is continuous. More equivalent definitions are contained in Theorem~\ref{thm:quasi fine} below.
\medskip

It is well known that any function  $u\in W^{1,\,p}(\R^n)$ has a $p$-quasi continuous representative   $v$. In particular, $u=v$ a.e. in $\R^n$. Moreover $v$ is unique in the sense that if $w$ is another $p$-quasi continuous representative of $u$, then $v=w$ q.e., see \cite[Th. 1.3]{Kilp-Mal}. Henceforth, when dealing with a function in $W^{1,\,p}(\R^n)$, we shall always assume that $u$ is $p$-quasi continuous. 
\begin{Def}\label{def:$p$-quasi sobolev}
{\rm
For an open set $\Omega\subset \R^n$ we 
denote the collection of all $p$-quasi open subsets of $\Omega$ by $\A_p(\Omega)$.
If $A$ is $p$-quasi open and $\pcap(A) > 0$ we say  that $u\in W^{1,\,p}(\R^n)$ belongs to the space  $W^{1,\,p}_0(A)$ if  any $p$-quasi continuous representative of $u$ vanishes $p$-q.e. in $\R^n\setminus A$. The space $W^{1,\,p}_0(A)$, equipped with the norm naturally induced by $W^{1,\,p}(\R^n)$, is a Banach space. Setting $p^\prime=p/(p-1)$, we denote by $W^{-1,\,p^\prime}\!(A)$  the dual space of $W^{1,\,p}_0(A)$. }
 \end{Def}
Note that  the space $W^{1,\,p}_0(A)$ can be equivalently defined by setting
$$
W^{1,\,p}_0(A)=\bigcap\set{W^{1,\,p}_0(U)}{U\,\,\text{open,}\,\,U\supset A},
$$
see \cite[Th. 2.10]{Kilp-Mal}. However, we shall never use this characterisation in the sequel.
\medskip

Since $p$-quasi open sets do not form a topology, we introduce the {\it $p$-fine topology} which is the coarsest topology on $\R^n$ making all (classical) $p$-superharmonic 
functions continuous. A more robust equivalent definition can be given using 
the Wiener criteria, as follows.
\begin{Def}[(Finely open sets)]\label{def:fine}
{\rm 
 A set $U\subset \mathbb R^n$ is  {\it $p$-finely open}  if for every
 $x\in U$ 
 $$
  \int_{0}^{1}
  \bigg( \frac{\pcap(B_r(x)\setminus U)}
  {r^{n-p}}\bigg)^{\frac 1 {p-1}} 
 \,\frac {dr} r< \infty.
$$
}
\end{Def}
The fine topology has been extensively studied in the context of nonlinear potential theory. 
For more details we refer the reader to 
\cite{Fuglede}, \cite{Fuglede--quasitop}, \cite{Adams-Lewis}, \cite{Kilp-Mal} and to the references therein. 
We recall here the following result, see  \cite[Th. 1.4 and 1.5]{Kilp-Mal}, 
which deals with the compatibility of finely open sets with 
quasi-open sets. 
\begin{Thm}\label{thm:quasi fine}
 Given a set $A\subset\R^n$, the following are equivalent.
 \begin{enumerate}[(i)]
  \item $A$ is $p$-quasi open.
  \item $A =  U \cup E$ where $ U$ is $p$-finely open and $\pcap(E) = 0$.
  \item There exists a $p$-quasi continuous function $u\in W^{1,\,p}(\R^n)$, $u\geq0$, such that $A = \{u>0\}$.
 \end{enumerate}
 Furthermore, given a $p$-quasi open set $A$ and a function $f:A\to\R$, the following are equivalent. 
 \begin{enumerate}[(i)]
  \item $f$ is $p$-quasi continuous in $A$.
  \item The sets $\{f>c\}$ and $\{f<c\}$ are $p$-quasi open for all $c\in\R$.
  \item $f$ is $p$-finely continuous in $A$ up to a set of zero $p$-capacity.
 \end{enumerate}
\end{Thm}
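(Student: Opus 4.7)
The statement packages two classical equivalences: a structural description of $p$-quasi open sets (open approximation in capacity, fine topology, level sets of Sobolev functions) and the parallel description of $p$-quasi continuous functions. My plan is to prove a cycle of implications within each block, leveraging two standard ingredients: the fact that every $u\in W^{1,p}(\R^n)$ has a $p$-quasi continuous representative unique up to zero $p$-capacity, and Fuglede's potential-theoretic result that every $p$-finely open set differs from an open set by a set of arbitrarily small $p$-capacity.

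For Part 1, the implication (iii) $\Rightarrow$ (i) is the easiest. Given a nonnegative $p$-quasi continuous $u$ with $A=\{u>0\}$ and, for each $\eps>0$, an open $V_\eps$ with $\pcap(V_\eps)<\eps$ on whose complement $u$ is continuous, the set $A\mns V_\eps$ is relatively open in $\R^n\mns V_\eps$ and therefore extends to an open $W_\eps\sub\R^n$; then $U_\eps:=W_\eps\cup V_\eps$ is open with $U_\eps\sdiff A\sub V_\eps$, giving $\pcap(U_\eps\sdiff A)<\eps$. For (i) $\Rightarrow$ (ii), I would choose open approximations $U_k$ with $\pcap(U_k\sdiff A)<2^{-kp}$ and almost-extremal $w_k\in W^{1,p}(\R^n)$ with $w_k\geq 1$ on $U_k\sdiff A$ and $\|w_k\|_{W^{1,p}}^p\lesssim 2^{-kp}$; then a Borel--Cantelli-style estimate forces $\pcap(\limsup_k(U_k\sdiff A))=0$, so $U:=A\mns\limsup_k(U_k\sdiff A)$ equals $A$ q.e., and the Wiener integral at any $x\in U$ is controlled by a convergent tail of $\|w_k\|_{W^{1,p}}$, yielding fine openness. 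Finally (ii) $\Rightarrow$ (iii) uses Fuglede's approximation: picking open $G_j\supset\R^n\mns U$ with $\pcap(G_j\mns(\R^n\mns U))\to 0$, one extracts a suitable cutoff limit of the capacitary potentials to produce a nonnegative $u\in W^{1,p}(\R^n)$ with $u=0$ q.e. outside $U$ and $u>0$ q.e. on $U$.

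Part 2 follows the same template. (i) $\Rightarrow$ (ii) is immediate: on $A\mns A_\eps$ the function $f$ is continuous, so $\{f>c\}\mns A_\eps$ is relatively open and equals $W\mns A_\eps$ for some open $W\sub\R^n$, making $\{f>c\}$ quasi open; the case $\{f<c\}$ is identical. For (ii) $\Rightarrow$ (iii) one applies Part 1 to each superlevel and sublevel set with rational $c$ and observes that the countable union of exceptional null-capacity sets is still null, outside which $f$ is finely continuous. The implication (iii) $\Rightarrow$ (i) is where I expect the \emph{main obstacle}: upgrading pointwise fine continuity to genuine uniform Euclidean continuity on a complement $A\mns A_\eps$ with $\pcap(A_\eps)<\eps$ requires Fuglede's deep quasi-topological theorem for the fine topology, together with a diagonal selection of exceptional open sets $A^{(k)}_\eps$ along a dense sequence of thresholds $c_k\in\mathbb{Q}$, summable so that $\sum_k\pcap(A^{(k)}_\eps)<\eps$. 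Without this machinery the Wiener-integral estimates only deliver continuity along sequences avoiding a point-dependent exceptional set, rather than a single uniform one, and bridging this gap is the real technical heart of the theorem.
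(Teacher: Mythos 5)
First, a framing point: the paper does not prove this theorem at all — it is quoted from Kilpel\"ainen--Mal\'y \cite{Kilp-Mal} (Th.~1.4 and 1.5) — so your proposal is being measured against the literature rather than against an in-paper argument.

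The decisive gap is in your implication (i) $\Rightarrow$ (ii) of the first block. The Borel--Cantelli step is fine: it produces $N=\limsup_k(U_k\sdiff A)$ with $\pcap(N)=0$ such that every $x\in U:=A\mns N$ lies in $U_k$ for all $k\geq k_0(x)$. But the claimed control of the Wiener integral does not follow. For $r<r_k(x):=\dist(x,\R^n\mns U_k)$ one has $B_r(x)\sub U_k$, hence $\pcap(B_r(x)\mns A)\leq\pcap(U_k\mns A)<2^{-kp}$; however, when $p<n$ the contribution of the radii in $(r_{k+1}(x),r_k(x)]$ to $\int_0^1\big(\pcap(B_r(x)\mns U)/r^{n-p}\big)^{1/(p-1)}\,dr/r$ is of order $2^{-kp/(p-1)}\,r_{k+1}(x)^{-(n-p)/(p-1)}$, and the radii $r_k(x)$ are completely uncontrolled: the open sets $U_k$ may hug $A$ arbitrarily tightly near $x$, and $A$ need not contain any Euclidean ball about $x$. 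So the ``convergent tail'' assertion is unfounded and the series can diverge. The standard route to (ii) — and the one in \cite{Kilp-Mal} — is instead to establish (iii) first and then invoke the theorem that every $p$-quasi continuous function is $p$-finely continuous q.e.\ (the Kellogg/Choquet property of the fine topology), so that $U=\{u>0\}\cap\{x: u\ \text{finely continuous at}\ x\}$ is finely open and $A\mns U$ is polar. In other words, the deep quasi-topological input that you correctly isolate as the ``technical heart'' of Part 2, (iii) $\Rightarrow$ (i), is already indispensable in Part 1, (i) $\Rightarrow$ (ii); the elementary capacity estimate you substitute for it does not close.

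The remaining steps are essentially right in outline: (iii) $\Rightarrow$ (i) by extending the relatively open set $A\mns V_\eps$; Part 2, (i) $\Rightarrow$ (ii) and (ii) $\Rightarrow$ (iii); and the honest deferral to Fuglede-type quasi-topology in Part 2, (iii) $\Rightarrow$ (i). Two smaller points: in Part 1, (ii) $\Rightarrow$ (iii), the countable selection of potentials covering $U$ itself rests on the quasi-Lindel\"of property, which should be cited rather than absorbed into ``a suitable cutoff limit''; and the function so built satisfies $\{u>0\}=A$ only up to a set of zero $p$-capacity, so the representative must be modified on that polar set to get the exact equality demanded by (iii).
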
  
\begin{Rem}\label{rem:q-unique}
{\rm
From Definition \ref{def:qopen} it is immediate that 
 a $p$-quasi open set $A$ remains quasi open if we change it by a 
 set of zero $p$-capacity. This makes the characterisation 
 $A =  U \cup E$  in Theorem~\ref{thm:quasi fine} unique up to sets of 
 zero $p$-capacity. On the other hand, if $U$ is $p$-finely open and $E$ has zero $p$-capacity, it is easy to check from Definition~\ref{def:fine} that also $U\setminus E$ is $p$-finely open. Note also that if $U$ is $p$-finely open and $\pcap(U)=0$ then $U$ is empty.
 }
 \end{Rem}
We  need to introduce also the notion of quasi connectedness, as follows. 
\begin{Def}[(Quasi connected sets)]\label{def:qconn}
{\rm
 A $p$-quasi open set $A\subset \R^n$ is  $p$-{\it quasi connected} if for any $p$-quasi open sets $A_1$, $A_2$ such that $A=A_1\cup A_2$ and $\pcap(A_1\cap A_2)=0$, then either $\pcap(A_1)=0$ or $\pcap(A_2)=0$. 
 }
\end{Def}

The notion of $p$-quasi connectedness is closely related to the topological notion of $p$-finely connected set. Indeed a much stronger result holds, due to 
A. Bj\"orn-J. Bj\"orn \cite[Th. 1.1]{BB}. 

\begin{Thm}\label{thBB}
Let $A$ be a $p$-quasi open set. Then the following are equivalent.
\begin{enumerate}[(i)]
  \item If $u\in W^{1,\,p}_\loc(\R^n)$ and $\nabla u=0$ a.e. in $A$, then there exists a constant $c$ such that $u=c$ a.e. in $A$.
  \item $A$ is $p$-quasi connected.
  \item $A=U\cup E$, where $U$ is $p$-finely connected and $p$-finely open  and $\pcap(E)=0$.
  \end{enumerate}
\end{Thm}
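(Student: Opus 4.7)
The plan is to split the three implications into the essentially formal equivalence $(\text{ii})\iff(\text{iii})$, which follows directly from Theorem~\ref{thm:quasi fine} and Remark~\ref{rem:q-unique}, and the analytically substantive equivalence $(\text{i})\iff(\text{iii})$, which rests on the $p$-fine calculus of Sobolev functions.

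For $(\text{iii})\Rightarrow(\text{ii})$ I would write $A=U\cup E$ as in (iii); for any decomposition $A=A_1\cup A_2$ into quasi open sets with $\pcap(A_1\cap A_2)=0$, decompose each $A_i=U_i\cup E_i$ analogously. Then $U_1\cap U_2$ is finely open of zero capacity, hence empty by Remark~\ref{rem:q-unique}, and $U\mns(U_1\cup U_2)\sub E\cup E_1\cup E_2$ has zero capacity. Fine connectedness of $U$ forces $\pcap(U\cap U_i)=0$ for some $i$, and being finely open such a set must be empty; the corresponding $U_i$ then lies in $E$, giving $\pcap(A_i)=0$. Conversely, if $U=V_1\sqcup V_2$ with $V_i$ disjoint nonempty finely open, then the quasi open cover $A=V_1\cup(V_2\cup E)$ has intersection of capacity at most $\pcap(E)=0$ while neither summand has zero capacity, contradicting $(\text{ii})$.

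For $(\text{iii})\Rightarrow(\text{i})$ I would argue via the quasi open level sets of $u$: both $\{u>c\}$ and $\{u<c\}$ are quasi open for every $c\in\R$ by applying Theorem~\ref{thm:quasi fine} to the $p$-quasi continuous representative of $u$. If $u$ were not essentially constant on $U$, there would exist $c$ in the essential range of $u|_U$ such that both $U\cap\{u>c\}$ and $U\cap\{u<c\}$ have positive capacity; the hypothesis $\gr u=0$ a.e.\ on $A$ combined with the fine differentiability of $u$ at $p$-quasi every point should force $\pcap(U\cap\{u=c\})=0$ for suitable $c$, so that $\{U\cap\{u>c\},\ U\cap\{u<c\}\}$ decomposes $U$ into two finely open sets of positive capacity with intersection of zero capacity. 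Fine connectedness of $U$ then yields a contradiction, so $u$ is $p$-q.e.\ constant on $U$ and hence a.e.\ constant on $A$.

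The main obstacle is the reverse $(\text{i})\Rightarrow(\text{iii})$. By contraposition, assume $U=V_1\sqcup V_2$ with $V_i$ disjoint nonempty finely open. One must construct $u\in W^{1,\,p}_\loc(\R^n)$ with $\gr u=0$ a.e.\ on $A$ yet taking distinct $p$-q.e.\ values on $V_1$ and $V_2$. The natural candidate $u=\chi_{V_2}$ is $p$-finely continuous when restricted to $U$ (since each $V_i$ is both finely open and finely closed relative to $U$), but Sobolev regularity is not automatic. A workable approach is to realise $u$ as a limit of $p$-harmonic approximants on open sets that $p$-quasi approximate $V_1$ and $V_2$ from inside, patched across the fine interface via potential-theoretic tools. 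This construction is the heart of the argument of A.\ Bj\"orn and J.\ Bj\"orn~\cite{BB} and is the delicate step of the proof.
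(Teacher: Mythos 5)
The first thing to say is that the paper does not actually prove Theorem~\ref{thBB}: it is quoted from Bj\"orn--Bj\"orn \cite{BB}, and the text only remarks that the equivalence $(ii)\Leftrightarrow(iii)$ is a straightforward consequence of the definitions together with Latvala's Theorem~\ref{latvala}, while the equivalence of these with $(i)$ is the main result of \cite{BB}. Your treatment of $(ii)\Leftrightarrow(iii)$ is essentially correct and matches the paper's intent, with one gloss: in the step ``fine connectedness of $U$ forces $\pcap(U\cap U_i)=0$ for some $i$'' you cannot apply fine connectedness directly, because $U$ equals the disjoint union of the finely open sets $U\cap U_1$ and $U\cap U_2$ only up to the zero-capacity remainder $U\setminus(U_1\cup U_2)$, which need not be finely open. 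You must first pass to $U\setminus(E_1\cup E_2)$ and invoke Theorem~\ref{latvala} to know that this set is still finely open and finely connected; only then does the disjoint decomposition force one piece to be empty.

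The genuine gap is in your argument for $(iii)\Rightarrow(i)$. The pivotal claim --- that $\gr u=0$ a.e.\ on $A$ together with fine differentiability at quasi-every point ``should force'' $\pcap(U\cap\{u=c\})=0$ for some $c$ strictly between two essential values of $u$ on $U$ --- is unsubstantiated, and it is essentially the entire content of the theorem. There is no pigeonhole available: uncountably many pairwise disjoint sets can each have positive $p$-capacity, so the disjointness of the level sets $U\cap\{u=c\}$ yields nothing, and fine differentiability with vanishing differential at a point is perfectly compatible with $u$ being non-constant on every fine neighbourhood of that point. What is actually required is the local statement that a function in $W^{1,\,p}_{\loc}(\R^n)$ whose gradient vanishes a.e.\ on a finely open set is finely locally constant quasi-everywhere on that set; establishing this is the hard part of \cite{BB} and does not follow from anything developed in the present paper. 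So the implication $(iii)\Rightarrow(i)$, exactly like the construction needed for $(i)\Rightarrow(iii)$ which you rightly defer to \cite{BB}, should be attributed to that reference rather than derived from the level-set sketch as written.
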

The equivalence between $(ii)$ and $(iii)$ in the above theorem is a straightforward consequence of the definition and of Theorem~\ref{latvala} below, while the equivalence between these two conditions and $(i)$ is the main result in \cite{BB}.

Next result, due to Latvala \cite[Th. 1.1]{Latvala} will be used later in this section.
 
 \begin{Thm}\label{latvala}
 Let $U$ be open and connected in the $p$-fine topology and let $E$ be a set of zero $p$-capacity. Then $U\setminus E$ is also open and connected in the $p$-fine topology.
  \end{Thm}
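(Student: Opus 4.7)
My plan is to prove the two conclusions separately: first that $U\setminus E$ is $p$-finely open, then that it is $p$-finely connected.

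For fine openness, at any $x\in U\setminus E$ I would apply the Wiener criterion of Definition~\ref{def:fine} after the elementary estimate
\[
\pcap\bigl(B_r(x)\setminus(U\setminus E)\bigr) \;\le\; C\bigl[\pcap(B_r(x)\setminus U) + \pcap(B_r(x)\cap E)\bigr] \;\le\; C\,\pcap(B_r(x)\setminus U),
\]
which uses subadditivity of $p$-capacity together with $\pcap(B_r(x)\cap E)\le\pcap(E)=0$. The convergence of the Wiener integral for $U\setminus E$ at $x$ then inherits directly from the assumed convergence for $U$.

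For fine connectedness I would argue by contradiction: assume $U\setminus E = V_1\cup V_2$ with $V_1,V_2$ disjoint, non-empty and $p$-finely open. Since each $V_i$ is finely open and disjoint from $V_{3-i}$, no point of $V_i$ can lie in the fine closure $\overline{V_{3-i}}^{\,f}$, and therefore
\[
\overline{V_i}^{\,f}\cap U\;\subset\; V_i\cup(E\cap U),\qquad i=1,2.
\]
Intersecting the two inclusions shows that $S:=\overline{V_1}^{\,f}\cap\overline{V_2}^{\,f}\cap U$ lies inside $E\cap U$ and hence has zero $p$-capacity. The sets $W_i:=U\setminus\overline{V_{3-i}}^{\,f}$ are $p$-finely open, disjoint, contain $V_i$ (hence are non-empty), and cover $U\setminus S$. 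If one can conclude that $S$ is actually empty, then $U=W_1\sqcup W_2$ would contradict the fine connectedness of $U$.

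The heart of the argument, and the main obstacle, is precisely showing that this residual set $S$ must be empty. Because $S$ is finely closed in $U$ rather than finely open, the easy assertion of Remark~\ref{rem:q-unique} does not apply directly, and $S$ cannot simply be reassigned to $W_1$ or to $W_2$: at each $x\in S$ every fine neighbourhood of $x$ in $U$ meets both $V_1$ and $V_2$, so adding $x$ to either $W_i$ destroys fine openness. My plan is to exploit the identity $\overline{V_1}^{\,f}\cap\overline{V_2}^{\,f}=\partial_f V_1\cap\partial_f V_2\subset U^c\cup E$ together with a capacity-theoretic argument exploiting the thinness of $U^c\cup E$ at every $x\in U$ and a Cartan/Choquet-type countability reduction of the fine topology, in order to force $S$ to be thin at each of its own points and thereby empty. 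This delicate capacitary step is the nontrivial potential-theoretic content that Latvala's theorem encapsulates.
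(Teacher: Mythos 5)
First, a point of reference: the paper does not prove this statement at all; it is quoted verbatim from Latvala \cite[Th.~1.1]{Latvala}, so there is no internal proof to compare with. Your first half (fine openness of $U\setminus E$) is correct and elementary --- it is exactly the observation already recorded in Remark~\ref{rem:q-unique} --- since $B_r(x)\setminus(U\setminus E)=(B_r(x)\setminus U)\cup(B_r(x)\cap E)$ and the second set has zero $p$-capacity, so the Wiener integrand at each $x\in U\setminus E$ is unchanged.

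The connectedness half, however, has a genuine gap, and you have put your finger on it yourself: everything is reduced to showing that $S=\overline{V_1}^{\,f}\cap\overline{V_2}^{\,f}\cap U$ is empty, and that step is never carried out. Worse, the one concrete mechanism you propose --- ``force $S$ to be thin at each of its own points and thereby empty'' --- is not a valid implication. Any nonempty set of zero $p$-capacity (a single point, say, since $p\le n$) is thin at every point of $\R^n$, in particular at each of its own points, without being empty; self-thinness forces emptiness only for \emph{finely open} sets (the last sentence of Remark~\ref{rem:q-unique}), whereas your $S$ is finely closed in $U$. Indeed you have already shown $S\subset E\cap U$, so $S$ is polar and automatically thin at all of its points; that observation therefore carries no information. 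What must actually be excluded is that a polar set can finely separate $W_1$ from $W_2$ inside $U$, i.e.\ that every fine neighbourhood of a point of $S$ meets both pieces. This is precisely the nontrivial potential-theoretic content of Latvala's theorem, and his proof proceeds by a substantially different and harder route (fine components, the quasi-Lindel\"of property, and Fuglede's fine connectivity results), not by a pointwise thinness estimate. As written, your argument establishes the easy openness assertion and a correct reduction of the connectedness assertion, but not the theorem.
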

It is known that the $p$-fine topology has the {\it quasi Lindel\"of property}, 
i.e. every family $\{U_\alpha\}_{\alpha\in\mathcal A}$ of 
$p$-finely open sets contains an at most  countable subfamily 
$\{U_h\}$ such that $\cup_{\alpha\in\mathcal A}U_\alpha=\cup_hU_h$ up to a set of zero $p$-capacity (see \cite[Sect.~12]{Fuglede} for the case $p=2$ and \cite{HKM1990} 
for  $p\neq 2$). Moreover, it is also known that the $p$-fine topology is locally connected, see \cite[Th. 3.15]{HKM1990}. Using these properties it is straightforward to check that  a  $p$-quasi open set $A$ can be always decomposed as 
\begin{equation}\label{eq:num}
A = \bigcup_{j\in\N}U_j\cup E,
\end{equation}
where  the $U_j$ are   $p$-finely open and  $p$-finely connected sets,  pairwise disjoint, and $\pcap(E)=0$.  We shall refer to the sets $U_j$ as to
the {\it $p$-quasi connected components} of $A$. Note that  
they are uniquely determined up to a set of zero $p$-capacity,  
as explained in Remark \ref{rem:q-unique}. 
\medskip

The next lemma deals with the restrictions of a function in $W^{1,\,p}_0(A)$  
on the $p$-quasi connected components of a quasi open set $A$. To this aim, given a set $E\subset\R^n$, for every $x'\in\R^{n-1}$ we set  $E_{x^\prime}=\set{t\in\R}{(x',t)\in E}$ and we shall denote by $\mathcal H^{n-1}$ the $(n-1)$-dimensional Hausdorff measure.

\begin{Lem}\label{lem:quasi res}
Let $A$ be a  $p$-quasi open set of finite measure 
and let $V \subset A$ be a $p$-quasi connected component of $A$. 
For any  $u\in W^{1,\,p}_{0}(A)$, 
we have $u_{|{V}}\in W^{1,\,p}_{0}(V)$. 
\end{Lem}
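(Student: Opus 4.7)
The plan is to show that $v$, defined from the $p$-quasi continuous representative of $u$ by $v=u$ on $V$ and $v=0$ on $\R^n\setminus V$, lies in $W^{1,\,p}_0(V)$. Since $u^\pm\in W^{1,\,p}_0(A)$, I first reduce to the case $u\geq 0$.

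The geometric core of the argument is the observation that $u$ vanishes $p$-q.e.~on the $p$-fine boundary $\partial_{\mathrm{fine}}V$. Indeed, by Theorem~\ref{thBB} and the decomposition~\eqref{eq:num}, $A=V\cup W\cup E$, where $W=\bigcup_{j\neq j_0}U_j$ is the union of the remaining $p$-quasi connected components and $\pcap(E)=0$; both $V$ and $W$ are $p$-finely open and pairwise disjoint. Since $W$ is $p$-finely open, each of its points has a $p$-fine neighborhood disjoint from $V$, so no point of $W$ belongs to the $p$-fine closure $\overline V^{\mathrm{fine}}$. Hence $\partial_{\mathrm{fine}}V=\overline V^{\mathrm{fine}}\setminus V\subseteq(\R^n\setminus A)\cup E$, and the claim follows since $u$ vanishes $p$-q.e.~on $\R^n\setminus A$ by the definition of $W^{1,\,p}_0(A)$.

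To deduce $v\in W^{1,\,p}(\R^n)$ with $\nabla v=\chi_V\nabla u$, I would use the ACL characterization together with the slicing notation $E_{x'}$ introduced before the lemma. For $\mathcal H^{n-1}$-a.e.~$x'\in\R^{n-1}$, $u(x',\cdot)\in W^{1,\,p}(\R)$ admits an absolutely continuous representative vanishing on $\R\setminus A_{x'}$. Via a Fubini-type application of the fine-boundary vanishing together with $\pcap(E)=0$, for a.e.~$x'$ the slices $V_{x'}$ and $W_{x'}$ are separated by points at which $u(x',\cdot)=0$ (either because such points lie in $\R\setminus A_{x'}$, or because they lie in a negligible slice of $E$). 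This yields the absolute continuity of $v(x',\cdot)=u(x',\cdot)\chi_{V_{x'}}$ with $\partial_t v=\chi_{V_{x'}}\partial_tu$; applying this along every coordinate direction gives $v\in W^{1,\,p}(\R^n)$. Finally, the $p$-quasi continuous representative of $v$ vanishes on $\R^n\setminus V$ by construction (identically on $A\setminus V$, $p$-q.e.~on $\R^n\setminus A$), so $v\in W^{1,\,p}_0(V)$ by Definition~\ref{def:$p$-quasi sobolev}.

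The main difficulty is the Fubini-type capacity argument in the slicing step, which transfers the $p$-capacity zero property of $E$ and the fine-boundary vanishing of $u$ in $\R^n$ into a $1$-dimensional statement about the transition points on $\mathcal H^{n-1}$-a.e.~slice. This step relies on standard slicing results for $p$-quasi continuous functions (cf.~\cite{Kilp-Mal,BB}), but the fine-topological structure established in the second paragraph makes the required vanishing at transition points natural.
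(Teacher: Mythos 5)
Your overall strategy coincides with the paper's: define $v=u\chi_V$, check quasi continuity, and prove $v\in W^{1,\,p}(\R^n)$ by slicing along coordinate lines. But the step you yourself flag as ``the main difficulty'' is a genuine gap, and the fine-boundary observation you offer in its place does not close it. What the one-dimensional argument actually requires is that for $\mathcal L^{n-1}$-a.e.\ $x'$ the sections $V_{x'}$ and $W_{x'}$ are \emph{open} subsets of $\R$ and $E_{x'}=\emptyset$; only then is $V_{x'}$ a countable union of intervals whose endpoints lie outside $A_{x'}$ (an endpoint of a component interval of $V_{x'}$ cannot lie in the open set $W_{x'}$ disjoint from $V_{x'}$), so that the continuous representative $u(x',\cdot)$ vanishes at those endpoints and $u(x',\cdot)\chi_{V_{x'}}$ remains absolutely continuous. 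Without openness of the sections, $u(x',\cdot)\chi_{V_{x'}}$ need not be (a.e.\ equal to) an absolutely continuous function at all. Your second paragraph does not supply this: the inclusion $\partial_{\mathrm{fine}}V\subseteq(\R^n\setminus A)\cup E$ is correct, but a point of $\R$ that is a Euclidean limit of $V_{x'}$ along the line $\{x'\}\times\R$ need not belong to the $p$-fine closure of $V$ (the fine topology is strictly finer than the Euclidean one), so the q.e.\ vanishing of $u$ on $\partial_{\mathrm{fine}}V$ gives no information at the transition points of the slices.

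The paper fills exactly this gap with Corollary~\ref{cor:addendum}: almost every one-dimensional section of a $p$-quasi open set is open. This is not a soft Fubini statement; it is proved in the appendix via the $p$-modulus of the family of vertical segments (Lemmas~\ref{lem:a1} and \ref{lem:a2}, resting on the fact that the preimage of a quasi open set under $M_p$-a.e.\ rectifiable curve is open), combined with $\pcap(E)=0\Rightarrow\mathcal H^{n-1}(E)=0$ to discard the sections of $E$. If you import that corollary, the remainder of your argument goes through and is essentially the paper's proof; as written, the proposal is incomplete at its decisive step.
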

\begin{proof}
Thanks to \eqref{eq:num}, we may write $A = V\cup U \cup E$, where $V, U, E$ are mutually disjoint, $V$ is as in the statement, $U$ is $p$-finely open and  $\pcap(E)=0$.
 Let $u\in W^{1,p}(\R)$ be a $p$-quasi continuous function, $u=0$ q.e. in $\R^n\setminus A$ and  set
 $$
v(x):\ =
\ \begin{cases}
u(x)&\text{ if }x\in V,\\
0&\text{elsewhere}.
\end{cases}
$$
Since $u$ is quasi continuous in $\R^n$, by Theorem~\ref{thm:quasi fine} it coincides with a finely continuous function up to a set of zero $p$-capacity. Then, it is easily checked that  also  $v$ is finely continuous up to a set of zero $p$-capacity and thus quasi continuous in $\R^n$.  
We claim that for $\mathcal L^{n-1}$-a.e. $x'\in\R^{n-1}$ the function $v(x',\cdot)$ is in $W^{1,\,p}(\R)$.

To this end recall that   $\pcap(E)=0$, hence $\mathcal H^{n-1}(E)=0$, see \cite[Sect. 4.7.2]{Evans-Gar}. Using this fact and  Corollary~\ref{cor:addendum}, we get that there exists a set $Z_0\subset\R^{n-1}$ with $\mathcal L^{n-1}(Z_0)=0$, such that for $x'\not\in Z_0$   the sections $V_{x^\prime}$ and $U_{x^\prime}$ are open  and  $E_{x^\prime}=\emptyset$.  

Since the {\it precise representative} $u^*$ of $u$ is $p$-quasi continuous, see \cite[Th. 3.10.2]{Ziemer},  with no loss of generality  we may take $u=u^*$. Thus we may  assume that $u(x',\cdot)\in W^{1,\,p}(\R)\cap C(\R)$, see \cite[Sect. 4.9.1]{Evans-Gar}. In addition, since $u(x)=0$ q.e. in $\R^n\setminus A$, enlarging $Z_0$ if needed, we may also assume that  $u(x',\cdot)\equiv0$ on the closed set $\R\setminus A_{x^\prime}$ for all $x'\not\in Z_0$. Since  the sections $V_{x^\prime}, U_{x^\prime}$ form a partition of $A_{x^\prime}$  and $u(x',\cdot)\in W^{1,\,p}_0(A_{x^\prime})$, we conclude that $u(x',\cdot)\in W^{1,\,p}_0(V_{x^\prime})$. This proves that $v(x',\cdot)\in W^{1,\,p}(\R)$, as claimed. 

Note that
$$
\int_{\R^{n-1}}\bigg(\int_\R\Big|\frac{d}{dt}v(x',t)\Big|^p\,dt\bigg)\,dx'\,\leq\,\int_{A}|\nabla u|^p\,dx.
$$
Repeating the above argument in all coordinate directions we conclude that $v\in W^{1,\,p}(\R^n)$. In fact, $v\in W^{1,\,p}_0(V)$ since it is $p$-quasi continuous and vanishes  q.e. outside $V$. This completes the proof in this case.
\end{proof}
The next lemma will be used in the proof of Proposition~\ref{extAH}.
\begin{Lem}\label{sob}
Let $A$ be a  $p$-quasi open and $p$-quasi connected set. Let $u,v\in W^{1,\,p}_0(A)$ be two  functions such that $u,v>0$ q.e. in $A$ and 
$$
\frac{\gr u}{u}=\frac{\gr v}{v} \qquad \text{a.e. in $A$.}
$$
Then, there exists a constant $\kappa>0$ such that $u=\kappa v$ q.e. in $A$.
\end{Lem}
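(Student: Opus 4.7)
The strategy is to invoke the Bj\"orn--Bj\"orn characterisation of $p$-quasi connectedness (Theorem~\ref{thBB}(i)): if $w\in W^{1,p}_{\loc}(\R^n)$ has $\nabla w=0$ a.e.\ on the $p$-quasi connected set $A$, then $w$ is constant a.e.\ on $A$. Our hypothesis, via the formal quotient rule, is exactly that $\nabla(u/v)=0$ a.e.\ on $A\cap\{v>0\}$, which is a.e.\ on $A$; the task is to turn this formal observation into an applicable form, since $u/v$ is not itself in $W^{1,p}_{\loc}(\R^n)$.

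A first step is a truncation reduction. Setting $\varphi_k:=\min(u,kv)\in W^{1,p}_0(A)$, a direct chain-rule computation using the hypothesis yields the identity $\nabla\varphi_k=(\varphi_k/v)\nabla v$ a.e.\ on $A$, so the pair $(\varphi_k,v)$ satisfies the same hypothesis as $(u,v)$, with the extra property $\varphi_k/v\le k$; symmetrically $\psi_m:=\max(u,v/m)$ gives $\psi_m/v\ge 1/m$. Thus the general conclusion follows from its bounded-ratio version by sending $k,m\to\infty$ and using that $\varphi_k\to u$ and $\psi_m\to u$ q.e.\ on $A$.

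Under the bounded-ratio assumption $\alpha v\le u\le\beta v$ q.e.\ on $A$ with $0<\alpha<\beta<\infty$, I would argue by contradiction. Suppose $u/v$ is not q.e.\ constant and set $\alpha_0:=\mathrm{ess\,inf}_A(u/v)<\beta_0:=\mathrm{ess\,sup}_A(u/v)$. For every $c\in(\alpha_0,\beta_0)$ the sets $A_1^c:=\{u>cv\}\cap A$ and $A_2^c:=\{u<cv\}\cap A$ are $p$-quasi open (by Theorem~\ref{thm:quasi fine} applied to the $p$-quasi continuous function $u-cv$), disjoint, and of positive $p$-capacity. The plan is to produce $c^*\in(\alpha_0,\beta_0)$ for which $\pcap(\{u=c^*v\}\cap A)=0$: then, up to a set of zero $p$-capacity, $A$ decomposes into the two disjoint $p$-quasi open pieces $A_1^{c^*}$ and $A_2^{c^*}$, both of positive $p$-capacity, directly contradicting the $p$-quasi connectedness of $A$ via Definition~\ref{def:qconn}.

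The main obstacle is producing such a threshold $c^*$. Since the level sets $\{u/v=c\}\cap A$ are pairwise disjoint and the pushforward of $\mathcal L^n|_A$ under the $p$-quasi continuous function $u/v$ is a finite measure on $\R$, all but countably many $c\in(\alpha_0,\beta_0)$ satisfy $\mathcal L^n(\{u/v=c\}\cap A)=0$. The identity $\nabla(u-cv)=(u/v-c)\nabla v$ then shows that $\nabla(u-cv)=0$ a.e.\ on this level set; the passage from zero Lebesgue measure plus vanishing weak gradient of a Sobolev function on its zero set to zero $p$-capacity is the delicate point, and would be carried out via the fine-potential-theoretic machinery from~\cite{Kilp-Mal,HKM1990}, using the $p$-quasi continuity of $u-c^*v$ to rule out any $p$-polar contribution. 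Once such $c^*$ is produced, the contradiction with $p$-quasi connectedness completes the proof.
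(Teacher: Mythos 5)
Your reduction to the bounded-ratio case via the truncations $\min(u,kv)$ and $\max(u,v/m)$ is correct, and the final splitting step is also sound: if some level set $\{u=c^*v\}\cap A$ had zero $p$-capacity, then $A=A_1^{c^*}\cup A_2^{c^*}$ up to capacity zero with both pieces quasi open and of positive capacity, contradicting Definition~\ref{def:qconn}. The genuine gap is exactly the step you flag as ``the delicate point'': producing such a $c^*$. The countability argument only yields $\mathcal L^n(\{u=c^*v\}\cap A)=0$ for all but countably many $c^*$, and there is no passage from ``zero Lebesgue measure plus $\gr(u-c^*v)=0$ a.e.\ on the level set'' to ``zero $p$-capacity''. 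First, $\gr f=0$ $\mathcal L^n$-a.e.\ on $\{f=0\}$ holds for \emph{every} Sobolev function, so once the level set is Lebesgue-null this condition is vacuous. Second, Lebesgue-null sets of positive $p$-capacity are ubiquitous for $1<p\le n$ (a piece of a hyperplane, for instance), and one should in fact expect every intermediate level set $\{u=cv\}\cap A$ to have \emph{positive} capacity, precisely because it separates $A_1^{c}$ from $A_2^{c}$ inside the quasi connected set $A$; this is what happens for a non-constant continuous function on a connected open set, where your scheme would equally fail. The contradiction therefore cannot be extracted from the level-set structure alone: the hypothesis $\gr u/u=\gr v/v$ must be used globally, not merely restricted to a null level set.

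For comparison, the paper's proof uses the gradient hypothesis globally in the following way: after removing a set of zero capacity it works in the fine topology, forms $w_\eps=\log\max\{u,\eps\}-\log\max\{v,\eps\}\in W^{1,\,p}_\loc(\R^n)$, notes that $\gr w_\eps=0$ a.e.\ on the finely open set $A\cap\{u>\eps\}\cap\{v>\eps\}$, and applies Theorem~\ref{thBB}(i) on a finely connected neighbourhood of each point (using local connectedness of the fine topology) to conclude that $u/v$ is locally, hence globally, constant. If you wish to salvage your argument, the missing ingredient is some globally defined $W^{1,\,p}_\loc$ function with vanishing gradient on $A$ (or on finely open exhausting pieces of it) to feed into Theorem~\ref{thBB}; the logarithmic truncation is the natural candidate, and with it your proof essentially collapses to the paper's.
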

\begin{proof} By subtracting from $A$ a set of zero $p$-capacity and using Theorem~\ref{latvala}, we may assume that $A$ is $p$-finely open and $p$-finely connected, that $u$ and $v$ are $p$-finely continuous in $A$ and that $u(x),v(x)>0$ for all $x\in A$. We claim that for any $x\in A$ there exists a $p$-finely open neighborhood of $x$ where $u/v$ is constant. Then the result will follow immediately.

To prove this claim, fix $x\in A$. Recall that the $p$-fine topology is locally connected. Thus we may find a $p$-finely connected neighborhood $V_x$ of $x$ contained in the $p$-finely open set $A\cap\{u>\eps\}\cap\{v>\eps\}$, where $0<\eps<\min\{u(x),v(x)\}$. Since  $w_\eps=\log\max\{u,\eps\}-\log\max\{v,\eps\}\in W^{1,p}_{loc}(\R^n)$ and $\nabla w_\eps=0$ in $V_x$, Theorem~\ref{thBB} yields that $u/v$ is constant on $V_x$. This proves the claim, thus concluding the proof of the lemma.
\end{proof}

\section{Eigenvalues of the $p$-Laplacian in a quasi open set}\label{sec:eigenp}
 In this section we study the main properties of the first and second eigenvalue of the $p$-Laplacian in a $p$-quasi open set and establish the variational characterisation of Theorem \ref{thm:mainthm1}. This will be used in Section \ref{sec:gamma cont} to establish the lower semicontinuity of $\lambda_2(A)$.  
 
 \subsubsection{The $p$-Laplacian and the Resolvent}\noindent		
\\
Given a quasi open set $A\in \A_p(\Omega)$ and
$f\in  W^{-1,\,p^\prime}(A)$, the Dirichlet problem
\begin{equation}\label{eq:p Laplacian}
\begin{cases}
-\dv(|\gr u|^{p-2}\gr u)  =\ f \ \ \text{in}\ \ A \\
 u \in W^{1,\,p}_0(A)
\end{cases} 
\end{equation}
is defined in the usual weak sense. Precisely, we say that $u\in W^{1,\,p}_0(A)$ is a {\it weak solution} of the equation \eqref{eq:p Laplacian} if  
for every $\phi\in W^{1,\,p}_0(A)$, we have 
 $$ \int_A |\gr u|^{p-2}\gr u\cdot\gr \phi\ dx =\inp{f}{\phi}, $$   
where $\inp{\cdot\,}{\cdot}$ denotes the duality pairing between $W^{-1,\,p^\prime}\!(A)$ and $W^{1,\,p}_0(A)$.

Let us set $\lap_pu:= \dv(|\gr u|^{p-2}\gr u) \in W^{-1,\,p^\prime}\!(A)$. Following \cite{Kilp-Mal}, we say that a $p$-quasi continuous function $u\in W^{1,\,p}(A)$ is a {\it fine supersolution} of the equation $-\lap_pu= 0$, if for every nonnegative function $\phi\in W^{1,\,p}_0(A)$, we have 
 $$ \int_A |\gr u|^{p-2}\gr u\cdot\gr \phi\ dx\geq0.$$ 
The monotonicity of the $p$-Laplacian operator ensures, as in the standard case of an 
open set, the existence of a unique weak solution of 
\eqref{eq:p Laplacian}. This enable us to define the  resolvent map as usual.
\begin{Def}[(Resolvent)]\label{def:res}
{\rm
For a quasi open set $A\in \A_p(\Om)$, the  
{\it resolvent map} for  the $p$-Laplacian operator, is defined  for any $f\in W^{-1,\,p^\prime}\!(A)$ by setting $\Rsp{A} (f) := u $, where  $ u \in W^{1,\,p}_0(A)$ is the unique weak solution 
of \eqref{eq:p Laplacian}. 
}
\end{Def}
 We recall from Kilpel{\"a}inen-Mal{\'y} \cite{Kilp-Mal} the following theorem related to fine supersolutions. 
\begin{Thm}[({\cite[Th. 4.3]{Kilp-Mal}})]\label{thm:pharnack}
 Let  $A\subset \R^n$ be $p$-quasi open and let $u_j\in W^{1,\,p}(A)$ be an increasing sequence of fine supersolutions of 
converging q.e. to a function $u:A\to\R$.
Then $u$ is $p$-quasi continuous. 

\end{Thm}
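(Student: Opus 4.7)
\textbf{Proof proposal for Theorem~\ref{thm:pharnack}.}
The goal is, given $\varepsilon>0$, to produce an open set $V_\varepsilon$ with $\pcap(V_\varepsilon)<\varepsilon$ such that $u$ restricted to $A\setminus V_\varepsilon$ is continuous, which matches the definition of $p$-quasi continuity recalled in Section~\ref{sec:qfopen}.

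First I would reduce the problem to a statement on a set where every term of the sequence is already well-behaved. Each $u_j\in W^{1,p}(A)$ is itself $p$-quasi continuous: as a fine supersolution it agrees q.e.\ with its fine lower semicontinuous regularization, which is $p$-finely continuous in $A$, and quasi continuity then follows from Theorem~\ref{thm:quasi fine}. Hence for each $j$ there exists an open set $U_j$ with $\pcap(U_j)<2^{-j-1}\varepsilon$ such that $u_j$ is continuous on $A\setminus U_j$. Setting $U:=\bigcup_j U_j$, subadditivity of capacity gives $\pcap(U)<\varepsilon/2$ and every $u_j$ is continuous on $A\setminus U$. What remains is to upgrade the pointwise monotone limit $u_j\nearrow u$ to a uniform limit outside an additional set of small capacity.

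Second, and this is the heart of the proof, I would establish convergence in capacity: for each fixed $\delta>0$ the excess sets $E_{j,\delta}:=\set{x\in A}{u(x)-u_j(x)>\delta}$ satisfy $\pcap(E_{j,\delta})\to 0$ as $j\to+\infty$. The input here is the supersolution property: a Caccioppoli-type estimate for differences of supersolutions (using $(u_k-u_j)_+\wedge\delta$ as a test function for $k>j$), together with a capacitary Tchebyshev inequality, should bound $\pcap(E_{j,\delta})$ by a multiple of $\delta^{-p}\int_A|\gr(u_k-u_j)|^p\dx$, and the latter integral vanishes in the limit thanks to the monotonicity of the sequence and weak $W^{1,p}$-compactness. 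Extracting a subsequence $j_k$ with $\pcap(E_{j_k,1/k})<2^{-k-2}\varepsilon$, setting $W:=\bigcup_k E_{j_k,1/k}$, and enclosing $U\cup W$ in an open $V_\varepsilon$ with $\pcap(V_\varepsilon)<\varepsilon$, one would conclude that on $A\setminus V_\varepsilon$ the continuous functions $u_{j_k}$ converge uniformly to $u$, so $u|_{A\setminus V_\varepsilon}$ is continuous.

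The main obstacle is clearly the passage to convergence in capacity in the second step. Pointwise monotone convergence of Sobolev functions alone only yields lower semicontinuity of the limit, which is strictly weaker than continuity on a large piece. Turning the monotone q.e.\ convergence into a capacitary Egorov-type statement requires controlling the $p$-capacity of the superlevel sets of $u-u_j$ \emph{uniformly} in $j$, and this uniform control relies precisely on the obstacle-problem machinery for the $p$-Laplacian developed in Kilpel\"ainen--Mal\'y; everything else in the argument is a fairly standard quasi-Lindel\"of packaging.
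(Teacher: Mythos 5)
First, a point of comparison: the paper does not prove this statement at all --- it is quoted from Kilpel\"ainen--Mal\'y \cite[Th. 4.3]{Kilp-Mal} --- so your proposal must be judged on its own merits, and it has a genuine gap at exactly the step you flag as the heart of the argument. A bound of the form $\pcap(E_{j,\delta})\lesssim \delta^{-p}\int_A|\gr (u_k-u_j)|^p\dx$ combined with the assertion that this integral vanishes would amount to the sequence $u_j$ being Cauchy in the $W^{1,\,p}$ seminorm, and neither monotonicity nor weak compactness gives that. The hypotheses do not even provide a uniform bound on $\|u_j\|_{W^{1,\,p}(A)}$, and the limit $u$ need not belong to $W^{1,\,p}$: in the paper's only application of this theorem (the minimum principle, Theorem~\ref{thm:strong min}) one takes $u_j=\min\{jv,1\}$ for a nonnegative supersolution $v$, whose limit is the characteristic function of $\{v>0\}$ and whose gradients typically blow up in $L^p$. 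Moreover, the Caccioppoli-type estimate you invoke for the difference $u_k-u_j$ is not available: the supersolution property gives only the two one-sided inequalities $\int_A|\gr u_k|^{p-2}\gr u_k\cdot\gr\phi\,dx\geq0$ and $\int_A|\gr u_j|^{p-2}\gr u_j\cdot\gr\phi\,dx\geq0$ for nonnegative $\phi$, and subtracting them yields no sign for $\int_A(|\gr u_k|^{p-2}\gr u_k-|\gr u_j|^{p-2}\gr u_j)\cdot\gr\phi\,dx$; differences of supersolutions of a nonlinear operator satisfy no useful energy estimate.

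The actual proof in \cite{Kilp-Mal} avoids convergence in capacity altogether and runs through fine potential theory: one shows that the increasing limit, being finite q.e., coincides q.e.\ with a fine supersolution (a convergence theorem for supersolutions), that fine supersolutions are $p$-finely continuous up to a set of zero $p$-capacity, and one then invokes the equivalence between fine continuity up to a set of zero capacity and quasi continuity recorded in Theorem~\ref{thm:quasi fine}. Your first step (reducing to a set off which all $u_j$ are simultaneously continuous) and the final quasi-Egorov packaging are standard and fine; but the capacitary convergence $\pcap\big(\set{x\in A}{u(x)-u_j(x)>\delta}\big)\to0$ is essentially equivalent to the conclusion itself, and it cannot be extracted from the energy estimate you propose.
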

An immediate consequence of the above theorem 
is the following minimum principle for fine supersolutions, see \cite[Th. 4.1]{Latvala}. We give its simple proof for the reader's convenience.
\begin{Thm}[(Minimum principle)]\label{thm:strong min}
 Let $u\in W_0^{1,\,p}(A)$ be a fine
supersolution on a $p$-quasi open and quasi connected set $A\subset \mathbb R^n$, $u\geq0$ q.e.. Then  either $u>0$  or $u=0$ q.e. in~$A$.
\end{Thm}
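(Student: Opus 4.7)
The strategy is to reduce the dichotomy to the $p$-quasi connectedness hypothesis on $A$, by producing a $p$-quasi continuous representative of $\chi_{\{u>0\}}$ and splitting $A$ accordingly. For each $j\in\N$ I would consider the truncation
\[
u_j:=\min\{ju,1\},
\]
which lies in $W^{1,p}(\R^n)$, is nonnegative, satisfies $u_j\leq u_{j+1}$ q.e., and converges pointwise q.e.\ to $v:=\chi_{\{u>0\}}$: indeed $u_j(x)=0$ whenever $u(x)=0$, while $u_j(x)=1$ for all $j$ large enough as soon as $u(x)>0$.

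Next I would verify that each $u_j$ is a fine supersolution of $-\lap_pu=0$ on $A$. The $p$-homogeneity of $\lap_p$ gives that $ju$ is a fine supersolution, and then the standard min-preservation property in nonlinear potential theory (see e.g.\ \cite{HKM1990,Kilp-Mal}) applied to $ju$ and the constant $1$ yields that $u_j$ is itself a fine supersolution. Theorem~\ref{thm:pharnack} applied to the increasing sequence $\{u_j\}\subset W^{1,p}(A)$ then gives that its q.e.\ limit $v$ is $p$-quasi continuous on $A$.

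Since $v$ takes only the values $0$ and $1$ q.e., the second part of Theorem~\ref{thm:quasi fine} implies that both
\[
A_1:=A\cap\{v>1/2\}=\{u>0\}\qquad\text{and}\qquad A_0:=A\cap\{v<1/2\}=A\cap\{u=0\}
\]
are $p$-quasi open. They are disjoint and their union agrees with $A$ up to a set of zero $p$-capacity. By Definition~\ref{def:qconn}, the $p$-quasi connectedness of $A$ forces either $\pcap(A_1)=0$, i.e.\ $u=0$ q.e.\ in $A$, or $\pcap(A_0)=0$, i.e.\ $u>0$ q.e.\ in $A$, which is the desired dichotomy.

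The main technical point I expect to have to justify carefully is that $u_j=\min\{ju,1\}$ is a genuine fine supersolution on $A$: this hinges on the compatibility of the fine supersolution class with pointwise minima and on treating the constant $1$ as a fine supersolution on the (only quasi open) set $A$. Both facts are standard in nonlinear potential theory but have to be invoked with care because the admissible test functions here belong to $W^{1,p}_0(A)$ rather than to the Sobolev class of a genuine open neighborhood.
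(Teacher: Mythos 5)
Your argument is essentially identical to the paper's proof: the same truncations $\min\{ju,1\}$, the same appeal to the closure of fine supersolutions under minima (the paper cites \cite[Prop. 3.5]{Kilp-Mal} for exactly the point you flag as the main technical issue), then Theorem~\ref{thm:pharnack} to get quasi continuity of the limit indicator, Theorem~\ref{thm:quasi fine} to get quasi openness of the two level sets, and quasi connectedness to conclude. The proposal is correct.
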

\begin{proof}
First, we recall that the minimum of two fine supersolutions is also a fine supersolution, see \cite[Prop. 3.5]{Kilp-Mal}. Therefore, given 
$u\in W_0^{1,\,p}(A)$ as in the statement, the functions $v_j:=\min\{ju,\,1\}$, $j\in\mathbb N$, form an increasing sequence of fine supersolutions converging q.e. in $A$ to the function
 $v$  given by 
\begin{equation*}
 v(x) =
 \begin{cases}
  0\ \qquad &\text{if}\quad x \in \{u=0\}, \\
  1\ \qquad &\text{if}\quad x \in \{u>0\}.
 \end{cases}
\end{equation*}
By Theorem~\ref{thm:pharnack}, $v$ is $p$-quasi continuous in $A$, hence by Theorem~\ref{thm:quasi fine} the sets $\{v<1\} =\{u=0\}$ and $\{v>0\}=\{u>0\}$ are both quasi open. Then the conclusion  follows immediately from the assumption that $A$ is $p$-quasi connected.
\end{proof}

\subsection{Variational eigenvalues}\label{subsec:var eigen}\noindent
We now recall for the reader's 
convenience a few well known facts from the 
classical variational theory of eigenvalues which 
we will need later. We refer the reader to 
\cite{G} for more details.
\subsubsection{Minimax characterisation}\noindent
\\
A useful way to deal with nonlinear eigenvalues is to obtain them via the Euler-Lagrange equation of constrained minimization problems. Given a Banach space $X$ and two
functionals $ E,G \in C^1(X) $, the minimizers $u$ of the constrained problem  $\min\set{E(w)}{G(w) =1}$ satisfy the equation 
\begin{equation}\label{eq:constrained problem}
DE(u) =\lambda DG(u)
\end{equation}
for some Lagrange multiplier $\lambda \in \R$, where $D$ denotes the
Fr\'echet derivative. There are several ways of 
generating such constrained critical values $\lambda$, if the functional $E$ is invariant with respect to 
some compact group of symmetries acting on the manifold 
 \begin{equation}\label{mani}
 \M:= \set{w\in X}{G(w) =1}.
 \end{equation}
Here  we use a variant of the {\it mountain pass lemma} 
by Ambrosetti-Rabinowitz \cite{Amb-Rab}. Recall that the norm of the Fr\'echet  derivative of the restriction $\widetilde E$ of $E$ to $\mathcal M$   
at a point $u\in\mathcal M$, is defined as
$$
\|D\widetilde E(u)\|_{*}:= \min\set{\big\|\,DE(u)-tDG(u)\big\|_{X^*}}{t\in\R},
$$
where $\|\cdot\|	_{X^*}$ denotes the norm of the dual space $X^*$. It is said  that the functional $E$ satisfies the {\it Palais-Smale condition on} $\mathcal M$, if for any sequence $u_h\in\mathcal M$ such that $E(u_h)$ is bounded and $\|D\widetilde E(u_h)\|_{*}\to0$, there exists a subsequence of $u_h$ converging strongly in $X$. Then, the following result holds, see \cite[Prop. 2.5]{Cue-Fig-Gos}, \cite[Th. 3.2]{G}.
\begin{Thm}\label{thm:Mpass}
Let $X$ be a Banach space and $E, G\in C^1(X)$. Let $\mathcal M$ be as in \eqref{mani} and assume that $DG\not=0$ on $\mathcal M$ and that $E$ satisfies the Palais-Smale 
 condition on $\mathcal M$. 
 \par
 Let $u_0,u_1\in\mathcal M$ and $\rho>0$ be such that $\|u_1-u_0\|_X>\rho$ and
 $$
 \inf\set{E(u)}{u\in\mathcal M,\, \|u-u_0\|_X=\rho}>\max\{E(u_0),E(u_1)\}.
 $$
 If the set $\,\Gamma(u_0,u_1)
  := \set{\gamma\in C\big([0,1],\mathcal M\big)}{\gamma(0) = u_0,\ \gamma(1) =u_1}$ is not empty, then 
  $$ \alpha = \inf_{\gamma\in\Gamma(u_0,u_1)}
  \Big[\,\max_{w\in\gamma([0,1])} E(w) \,\Big] $$ 
  is a critical value for $\widetilde E$, i.e., there exists $u\in\mathcal M$ such that $E(u)=\alpha$ and $\|D\widetilde E(u)\|_{*}=0$.
\end{Thm}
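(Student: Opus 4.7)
The plan is to run the standard deformation/Mountain Pass argument on the Banach manifold $\mathcal M$. First I would show $\alpha>\max\{E(u_0),E(u_1)\}$ by a path-crossing argument: since $\|u_1-u_0\|_X>\rho$, every continuous $\gamma\in\Gamma(u_0,u_1)$ must meet the sphere $S_\rho:=\set{u\in\mathcal M}{\|u-u_0\|_X=\rho}$, hence $\max_{t\in[0,1]}E(\gamma(t))\geq\inf_{S_\rho}E$; taking the infimum over $\gamma$ yields $\alpha\geq\inf_{S_\rho}E>\max\{E(u_0),E(u_1)\}$. This strict separation of the endpoints from the candidate critical level is the key geometric input.

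Next, I would argue by contradiction, supposing that $\alpha$ is not a critical value of $\widetilde E$. Then every $u\in\mathcal M$ with $E(u)=\alpha$ satisfies $\|D\widetilde E(u)\|_*>0$, and combined with the Palais-Smale condition this yields constants $\eta,\delta>0$ such that $\|D\widetilde E(u)\|_*\geq\delta$ for every $u\in\mathcal M$ with $|E(u)-\alpha|\leq 2\eta$; shrinking $\eta$ further one may also enforce $\alpha-2\eta>\max\{E(u_0),E(u_1)\}$. The contradiction will come by producing, starting from any $\gamma_0\in\Gamma(u_0,u_1)$ with $\max E\circ\gamma_0<\alpha+\eta/2$, a new competitor in $\Gamma(u_0,u_1)$ whose $E$-maximum is at most $\alpha-\eta/2$.

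For the deformation, since $DG(u)\neq 0$ on $\mathcal M$, the tangent space $T_u\mathcal M=\ker DG(u)$ has codimension one in $X$, and a Hahn-Banach argument gives
$$ \|D\widetilde E(u)\|_*\ =\ \min_{t\in\R}\|DE(u)-tDG(u)\|_{X^*}\ =\ \sup\set{-\langle DE(u),v\rangle}{v\in T_u\mathcal M,\ \|v\|_X\leq 1}. $$
Using a locally Lipschitz partition of unity, I would build a tangent pseudo-gradient field $W$ on the slab $\{|E-\alpha|\leq 2\eta\}$ satisfying $W(u)\in T_u\mathcal M$, $\|W(u)\|_X\leq 1$ and $\langle DE(u),W(u)\rangle\leq-\delta/2$. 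Multiplying $W$ by a Lipschitz cutoff $\chi$ equal to $1$ on $\{|E-\alpha|\leq\eta\}$ and vanishing outside the slab, consider the ODE $\dot\sigma=\chi(\sigma)W(\sigma)$ on $\mathcal M$: solutions remain on $\mathcal M$ because $W$ is tangent (checked by locally straightening $\mathcal M$ via the implicit function theorem applied to $G$), and along any trajectory one has $\frac{d}{dt}E(\sigma(t))\leq-\tfrac12\delta\chi(\sigma)$. Setting $\gamma_1(s):=\sigma(4\eta/\delta,\gamma_0(s))$ then produces a competitor in $\Gamma(u_0,u_1)$—the endpoints $u_0,u_1$ are fixed since $\chi$ vanishes in a neighborhood of each—and a direct energy computation gives $\max E\circ\gamma_1\leq\alpha-\eta/2$, contradicting the definition of $\alpha$.

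The main obstacle is the construction of the locally Lipschitz, tangent-valued pseudo-gradient field and the verification that its flow preserves $\mathcal M$. In a Hilbert space this is routine via orthogonal projection onto $\ker DG(u)$; in the Banach setting one must combine the Hahn-Banach characterisation above with a partition-of-unity patching to secure local Lipschitz regularity, and with the local straightening of $\mathcal M$ provided by the implicit function theorem—this is where the hypothesis $DG\neq 0$ on $\mathcal M$ is genuinely used—to ensure that the flow preserves the constraint $G=1$. Once this ingredient is in place, the energy estimate along the flow and the final contradiction are standard.
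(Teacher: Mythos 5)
The paper does not prove Theorem~\ref{thm:Mpass} at all: it quotes it from \cite[Prop. 2.5]{Cue-Fig-Gos} and \cite[Th. 3.2]{G}, and the proofs in those references run through Ekeland's variational principle applied to the complete metric space $\Gamma(u_0,u_1)$ with the uniform distance, precisely so as to avoid the deformation machinery you invoke. Your overall architecture is the classical alternative and is sound in outline: the intermediate-value argument giving $\alpha\geq\inf_{S_\rho}E>\max\{E(u_0),E(u_1)\}$, the uniform lower bound $\|D\widetilde E\|_*\geq\delta$ on a slab around level $\alpha$ obtained from the Palais--Smale condition (this needs the small extra remark that the multipliers $t_h$ realising the minimum in $\|D\widetilde E(u_h)\|_*$ stay bounded because $DG\neq0$ on $\mathcal M$, so that the limit point is genuinely critical), the Hahn--Banach identification of $\|D\widetilde E(u)\|_*$ with the norm of $DE(u)$ restricted to $\ker DG(u)$, and the energy bookkeeping along the truncated flow are all correct.

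The genuine gap is the step you yourself flag as the main obstacle, and as described it does not close. First, the partition-of-unity patching does not produce a tangent field: if $v_i\in\ker DG(u_i)$ are the locally chosen almost-optimal directions, the combination $\sum_i\varphi_i(u)v_i$ at a nearby point $u$ need not lie in $\ker DG(u)$, because the tangent hyperplanes vary with the base point. To correct this one must project, say via $v\mapsto v-\langle DG(u),v\rangle\, e(u)$ with $e(u)$ chosen so that $\langle DG(u),e(u)\rangle=1$; but $G$ is only $C^1$, so $DG$ is merely continuous and the corrected field is in general only continuous, not locally Lipschitz --- and then existence, uniqueness and continuous dependence for the ODE (hence continuity of the deformed path $\gamma_1$) are no longer available. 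Second, the claim that the flow of a tangent field stays on $\mathcal M$ ``by local straightening'' suffers from the same regularity loss: the charts supplied by the implicit function theorem are only $C^1$, so they do not transport local Lipschitz continuity of the field. These are exactly the difficulties that the Ekeland-based proofs of the cited references are designed to bypass. To make your route rigorous you would need either additional regularity (e.g. $E,G\in C^{1,1}$ on bounded sets, which holds for the functionals in \eqref{casopart} when $p\geq2$ but fails for $1<p<2$) or a genuinely different construction of the deformation; as written, the central ingredient of your argument is asserted rather than established.
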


\subsubsection{Eigenvalues of the $p$-Laplacian operator}\noindent
\\
Here we provide the definitions of eigenvalues and eigenfunctions of the $p$-Laplacian, along with the 
notion of simplicity of eigenvalues. Although these definitions are  quite similar to the ones  for open sets, some subtle aspects will be investigated later in this  section.
\begin{Def}\label{def:eigen} 
{\rm 
Let $A\subset\R^n$ be a $p$-quasi open set of finite measure and $\lambda\in\R$.  If
there exists a non-zero weak solution  of the eigenvalue problem 
 \begin{equation}\label{eq:weak eigen}  
 \begin{cases}
-\dv(|\gr u|^{p-2}\gr u)  =\ \lambda |u|^{p-2}u \ \ \text{in} \ A \\
 u \in W^{1,\,p}_0(A),
 \end{cases}
\end{equation}
then $\lambda$ is an {\it eigenvalue} of  the $p$-Laplacian on $A$ and $u$ is a corresponding {\it eigenfunction}; if in addition $\|u\|_{L^p(A)}=1$, then $u$ is called a {\it normalized} eigenfunction. The subspace of $W^{1,\,p}_0(A)$ generated by all the eigenfunctions corresponding to $\lambda$ is the  {\it eigenspace} associated to $\lambda$. When this eigenspace is  one-dimensional, i.e.
 $\{u,-u\}$ are the only normalized eigenfunctions, then $\lambda$ is said to be {\it simple}.
 }
\end{Def}
Note that \eqref{eq:weak eigen} is a special case of the Euler-Lagrange equation \eqref{eq:constrained problem} if one takes
 $X = W^{1,\,p}_0(A) $, 
\begin{equation}\label{casopart}
 E(w) =\int_A |\gr w|^p\dx\quad \text{and}\quad G(w)=\int_A |w|^p\dx. 
 \end{equation} 
Moreover, taking $u$ as test function in \eqref{eq:weak eigen}, one gets
\begin{equation}\label{eq:lambda}
 \lambda \ =\displaystyle\ \frac{\displaystyle\int_A |\gr u|^p \dx}{\displaystyle\int_A | u|^p \dx}.
\end{equation}
Hence, just as for open sets, it is easy to show that    
\begin{equation}\label{andr}
\lambda\, \geq\, c(n,p)\,|A|^{-p/n},
\end{equation} 
using the Sobolev inequality. Thus all the eigenvalues are bounded away from zero. 
\begin{Def}[(First eigenvalue)]\label{def:1 eigen}
{\rm 
 The first eigenvalue of the $p$-Laplacian in a $p$-quasi open set $A\subset\R^n$ of finite measure, is defined as 
$$
  \lambda_1(A) := \min\set{\lambda > 0}{\lambda \ \text{is an eigenvalue of the $p$-Laplacian in $A$}}.
$$
}
\end{Def}
Note that the above definition is well posed thanks to Proposition \ref{prop: first eigen prop} (i) below and to the fact that by  \eqref{andr} the eigenvalues are greater than a strictly positive constant. 
Introducing the manifold
\begin{equation}\label{manip}
\M_p(A):= \set{w\in W^{1,\,p}_0(A)}{\|w\|_{ L^p(A)} = 1},
\end{equation}
we have that 
\begin{equation}\label{fili1}
\lambda_1(A)= \inf_{w\in \M_p(A)} \int_A |\gr w|^p \dx.
\end{equation} 
Indeed, the fact that $\lambda_1(A)$ is greater than or equal to the infimum at the right hand side is an immediate consequence of \eqref{eq:lambda}. Instead, the opposite inequality
 is  
obtained by observing that if $A$ has finite measure then  the functional $E$ admits a minimizer $u$ on $\mathcal M_p(A)$ and $u$  is a weak solution of \eqref{eq:weak eigen} for some $\lambda>0$. Thus, $\lambda=\lambda_1(A)$ and  we have 
$$
\lambda_1(A) =\int_A |\gr u|^p \dx=\min_{w\in \M_p(A)}\ \int_A |\gr w|^p \dx. 
$$
In particular $u$ is an eigenfunction for $\lambda_1(A)$.

\begin{Rem}\label{marc}
{\rm 
For a  $p$-quasi open set $A$ of finite measure, set $X=W^{1,\,p}_0(A)$ and $E,G$ as in \eqref{casopart}. Note that for every $t\in\R$, $u\in W^{1,\,p}_0(A)$, the derivative $DE(u)-tDG(u)$ is the element of $W^{-1,\,p^\prime}(A)$ such that for all $\phi\in W^{1,\,p}_0(A)$
$$
\langle DE(u)-tDG(u),\phi\rangle=p\int_A(|\gr u|^{p-2}\gr u\cdot\gr\phi-t |u|^{p-2}u\phi)\,dx.
$$
From this equality it follows immediately that if $u$ is a critical point for $\widetilde E$ on $\mathcal M_p$, then $u$ is an eigenfunction and $E(u)$ is an eigenvalue of the $p$-Laplacian.
}
\end{Rem}
The following simple lemma will allow us to use Theorem~\ref{thm:Mpass}.
\begin{Lem}\label{utile} Let $A$ be a  $p$-quasi open set of finite measure. Let $E, G:W^{1,\,p}_0(A)\to\R$ be as in \eqref{casopart} and $\mathcal M_p$ as in \eqref{manip}. Then $E$ satisfies the Palais-Smale condition on $\mathcal M_p(A)$.
\end{Lem}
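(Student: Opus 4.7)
The plan is to take any sequence $\{u_h\}\subset\mathcal M_p(A)$ with $E(u_h)$ bounded and $\|D\widetilde E(u_h)\|_{*}\to 0$ and extract a subsequence converging strongly in $W^{1,p}_0(A)$, by the classical three-step argument. Step one: boundedness of $E(u_h)$ combined with $\|u_h\|_{L^p(A)}=1$ shows that $\{u_h\}$ is bounded in $W^{1,p}_0(A)$, which by definition sits as a closed subspace of the reflexive space $W^{1,p}(\R^n)$. Hence, passing to a subsequence, $u_h\rightharpoonup u$ weakly in $W^{1,p}_0(A)$ for some $u\in W^{1,p}_0(A)$.

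Step two is to upgrade this to strong $L^p(\R^n)$-convergence. On each ball $B_R$ this follows from Rellich--Kondrachov applied in $W^{1,p}(\R^n)$, and the tail is controlled by the finite measure of $A$: when $p<n$, H\"older together with the Sobolev embedding $W^{1,p}(\R^n)\hookrightarrow L^{p^*}(\R^n)$ gives
$$
\|u_h-u\|_{L^p(A\setminus B_R)}\leq |A\setminus B_R|^{1/n}\,\|u_h-u\|_{L^{p^*}(\R^n)}\leq C\,|A\setminus B_R|^{1/n},
$$
and a parallel estimate with any fixed exponent $q>p$ works for $p=n$. Since $|A\setminus B_R|\to 0$ as $R\to\infty$, one obtains $u_h\to u$ in $L^p(\R^n)$; in particular $\|u\|_{L^p}=1$, so $u\in\mathcal M_p(A)$.

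Step three exploits the Palais--Smale hypothesis. By definition of $\|D\widetilde E(u_h)\|_{*}$, there exist Lagrange multipliers $t_h\in\R$ with $DE(u_h)-t_h\,DG(u_h)\to 0$ in $W^{-1,p'}(A)$. Pairing with $u_h$ and using $E(u_h)$ bounded together with $G(u_h)=1$ shows $\{t_h\}$ is bounded. Pairing instead with the bounded sequence $u_h-u$ makes the duality term $o(1)$, while the lower-order contribution $t_h\int_A|u_h|^{p-2}u_h(u_h-u)\,dx$ tends to zero because $\{|u_h|^{p-2}u_h\}$ is bounded in $L^{p'}(A)$ and $u_h-u\to 0$ in $L^p(A)$. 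Therefore $\int_A|\gr u_h|^{p-2}\gr u_h\cdot\gr(u_h-u)\,dx\to 0$. Combining this with $\int_A|\gr u|^{p-2}\gr u\cdot\gr(u_h-u)\,dx\to 0$ (from weak convergence, since $|\gr u|^{p-2}\gr u\in L^{p'}(A)$) and invoking the strict monotonicity of the field $\xi\mapsto|\xi|^{p-2}\xi$ --- pointwise when $p\ge 2$, and via an additional H\"older argument against $(|\gr u_h|+|\gr u|)^{p(2-p)/2}$ when $1<p<2$ --- yields $\gr u_h\to\gr u$ in $L^p(A)$, i.e.\ strong convergence in $W^{1,p}_0(A)$. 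The only mildly delicate point I anticipate is the uniform tail estimate in step two, which is needed because $A$ has only finite measure without being assumed bounded.
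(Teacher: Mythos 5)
Your proposal is correct and follows essentially the same route as the paper: extract the Lagrange multipliers from the definition of $\|D\widetilde E(u_h)\|_{*}$, show they are bounded by testing with $u_h$, obtain strong $L^p(A)$-convergence from Rellich--Kondrachov plus a tail estimate using the finite measure of $A$, and conclude strong $W^{1,p}_0(A)$-convergence from the monotonicity inequalities of Lemma~\ref{lem:ineq}. The only cosmetic difference is that the paper deduces strong convergence of $\lap_p u_h$ in $W^{-1,p'}(A)$ and shows $\{u_h\}$ is Cauchy by pairing $u_h$ against $u_k$, whereas you pair against the weak limit $u$ directly; both are standard and equivalent.
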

\begin{proof}
Let $u_h\in\mathcal M_p(A)$ be a sequence such that $E(u_h)$ is bounded and $\|D\tilde E(u_h)\|_*\to0$. Then there exists a sequence $t_h\in\R$ such that $\|\lap_pu_h+t_h|u_h|^{p-2}u_h\|_{W^{-1,\,p^\prime}\!(A)}\to0$. Setting $f_h:=\lap_pu_h+t_h|u_h|^{p-2}u_h$, we observe that
\begin{align*}
-\int_A|\gr u_h|^p\,dx+t_h=\langle f_h,u_h\rangle\to0\qquad\text{as $h\to\infty$.}
\end{align*}
Therefore, up to a subsequence we may assume that $t_h\to t\in\R$. Moreover, the compact imbedding of $W^{1,\,p}(B_r)$ in $L^p(B_r)$ for all $r>0$ and the Sobolev inequality imply that, up to a subsequence,  the sequence $u_h$ converges strongly in $L^p(A)$. In order to prove the lemma it is enough to show that $u_h$ converges strongly in $W^{1,\,p}_0(A)$. This follows immediately by observing  that  the sequence $\lap_pu_h$ converges strongly in $W^{-1,\,p^\prime}\!(A)$.
Therefore, we have
\begin{align*}
&\int_A(|\gr u_k|^{p-2}\gr u_k-|\gr u_h|^{p-2}\gr u_h)\cdot(\gr u_k-\gr u_h)\,dx=-\langle\lap_pu_k-\lap_pu_h, u_k-u_h\rangle  \\
&\qquad\qquad\qquad\qquad\qquad\qquad\qquad\qquad
\leq C\|\lap_pu_k-\lap_pu_h\|_{W^{-1,\,p^\prime}\!(A)}\to0 \quad\text{as $h,k\to\infty$},
\end{align*}
and the conclusion follows from Lemma~\ref{lem:ineq}.
\end{proof}
When $X$ is a Hilbert space (corresponding to $W^{1,\,2}_0(A)$ in this setting), then the 
discreteness of the spectrum is well known from the classical theory of linear operators. But for 
the case of $W^{1,\,p}_0(A)$ with $p\neq 2$, the existence of a spectral gap is not known in general, 
except for the first and second eigenvalues on open connected sets 
(see \cite{Lind1}, \cite{Juu-Lind}). 

To make matters worse, 
we work in the framework of quasi open sets with a weaker notion of  connectedness than the standard one. Therefore some delicate issues have to be handled  
in order  to characterize the second eigenvalue. 

 \subsection{Properties of the first eigenvalue}\label{subsec: Properties of fst}\noindent
For open sets of finite measure it is well known that every eigenvalue is the first eigenvalue 
in its nodal domains, i.e., if $\lambda$ is an eigenvalue with eigenfunction $u$, then 
$\lambda = \lambda_1(\{u>0\})$. The proof of this result for the eigenvalues of the $p$-Laplacian in an open set is due to 
Brasco-Franzina \cite[Th.~3.1]{Bra-Fra--hidconv}. The same proof carries on in the framework of quasi open sets, so we omit it. 
\begin{Lem}\label{lem:first eigen}
Let $A$ be a $p$-quasi open set of finite measure, $\lambda$ an eigenvalue of the  $p$-Laplacian in $A$ and  $u\in W_0^{1,\,p}(A)$ a corresponding eigenfunction. Then $\lambda=\lambda_1(\{u>0\})$ and $u$ is a first eigenfunction of $\{u>0\}$. Moreover, if $\lambda_1(A)$ is simple and 
$u\in W_0^{1,\,p}(A)$ is an eigenfunction of $\lambda_1(A)$, then $u$ does not change sign. 
\end{Lem}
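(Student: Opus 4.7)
The plan is to prove the two assertions in turn.

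For the first, observe that $\{u>0\}$ is $p$-quasi open by Theorem \ref{thm:quasi fine}, and that $u^+=\max\{u,0\}$ is $p$-quasi continuous and vanishes q.e.\ outside $\{u>0\}$; hence $u^+\in W^{1,\,p}_0(\{u>0\})\subset W^{1,\,p}_0(A)$. Testing equation \eqref{eq:weak eigen} against $u^+$ yields
$$
\int_{\{u>0\}}|\gr u^+|^p\,dx = \lambda\int_{\{u>0\}}(u^+)^p\,dx,
$$
so the variational characterisation \eqref{fili1} gives the easy inequality $\lambda_1(\{u>0\})\le\lambda$. Testing \eqref{eq:weak eigen} with arbitrary $\phi\in W^{1,\,p}_0(\{u>0\})$ extended by zero to $A$ furthermore shows that $u^+$ is a nonnegative weak eigenfunction on $\{u>0\}$ with eigenvalue $\lambda$.

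For the reverse inequality I would decompose $\{u>0\}=\bigcup_{j\in\N}V_j\cup E$ into $p$-quasi connected components as in \eqref{eq:num}. By Lemma \ref{lem:quasi res}, $u^+|_{V_j}\in W^{1,\,p}_0(V_j)$; it is a nontrivial nonnegative eigenfunction with eigenvalue $\lambda$, and since it is in particular a fine supersolution of $-\lap_pu=0$, Theorem \ref{thm:strong min} gives $u^+>0$ q.e.\ on $V_j$. Similarly one can choose a normalized first eigenfunction $v_j$ of $\lambda_1(V_j)$ which is q.e.\ positive on $V_j$. Applying the classical $p$-Picone inequality with the test function $v_j^p/(u^++\eps)^{p-1}$ in the equation for $u^+$ on $V_j$ and letting $\eps\to 0^+$ gives
$$
\lambda_1(V_j)\int_{V_j}v_j^p\,dx = \int_{V_j}|\gr v_j|^p\,dx \,\geq\, \lambda\int_{V_j}v_j^p\,dx,
$$
so $\lambda_1(V_j)\ge\lambda$. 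A component-by-component decomposition of the Rayleigh quotient (using Lemma \ref{lem:quasi res}) gives $\lambda_1(\{u>0\})=\min_j\lambda_1(V_j)$, and combining both inequalities yields $\lambda=\lambda_1(\{u>0\})$, attained by $u^+$ itself.

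For the second assertion, assume $\lambda_1(A)$ is simple and suppose by contradiction that $u$ changes sign, so $u^\pm\not\equiv 0$. Testing \eqref{eq:weak eigen} separately with $u^+$ and $u^-$ gives
$$
\int_A|\gr u^\pm|^p\,dx = \lambda_1(A)\int_A(u^\pm)^p\,dx,
$$
so $u^+/\|u^+\|_{L^p(A)}$ and $u^-/\|u^-\|_{L^p(A)}$ both minimize the Rayleigh quotient on $\M_p(A)$, and therefore are normalized first eigenfunctions on $A$. Being supported on disjoint sets they are linearly independent, contradicting simplicity.

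The main obstacle is the rigorous use of Picone's inequality on a $p$-quasi connected, $p$-quasi open set $V_j$: one must verify that $v_j^p/(u^++\eps)^{p-1}\in W^{1,\,p}_0(V_j)$ and obtain enough integrability to pass to the limit $\eps\to 0^+$. This is precisely where the strict q.e.\ positivity of $u^+$ and $v_j$ on $V_j$ (from Theorem \ref{thm:strong min}) and the fine-continuity framework of Section \ref{sec:qfopen} enter crucially, letting the open-set argument of Brasco--Franzina \cite[Th.~3.1]{Bra-Fra--hidconv} transfer to the quasi open setting.
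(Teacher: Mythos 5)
Your proof is correct, and it is worth stressing that the paper itself gives no proof of this lemma: it only remarks that the open-set argument of Brasco--Franzina \cite[Th.~3.1]{Bra-Fra--hidconv} carries over to quasi open sets. That argument runs through the ``hidden convexity'' of the Dirichlet energy along curves $t\mapsto((1-t)w_0^p+tw_1^p)^{1/p}$, whereas you use the differential (Picone) form of the same inequality; the two are essentially equivalent, and the paper uses exactly your computation, including the $(\,\cdot\,+\eps)^{p-1}$ regularisation and the limit $\eps\to0^+$, in its proof of Proposition~\ref{extAH}. Two remarks on your version. First, the decomposition of $\{u>0\}$ into quasi connected components $V_j$, Lemma~\ref{lem:quasi res} and the minimum principle are all superfluous for the inequality $\lambda_1(\{u>0\})\ge\lambda$: by definition $u^+>0$ q.e.\ (hence a.e.) on all of $\{u>0\}$, so you can take a single nonnegative normalized first eigenfunction $v$ of $\{u>0\}$ and run Picone once on $\{u>0\}$ with the test function $v^p/(u^++\eps)^{p-1}$ in the equation satisfied by $u^+$; monotone convergence then gives $\lambda_1(\{u>0\})=\int|\gr v|^p\,dx\ge\lambda\int v^p\,dx=\lambda$. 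Your component-wise argument proves the stronger fact that $\lambda_1(V_j)=\lambda$ for every component, which the lemma does not require. Second, the one genuine technical point is the admissibility of $v^p/(u^++\eps)^{p-1}$ as a test function: without knowing $v\in L^\infty$ the term $v^{p-1}\gr v/(u^++\eps)^{p-1}$ need not lie in $L^p$, so one should either invoke the boundedness of eigenfunctions or truncate $v$ at level $k$ and let $k\to\infty$. You correctly flag this, and the paper elides the identical point in Proposition~\ref{extAH}, so it is not a gap relative to the paper's own standard. Your proof of the second assertion (testing with $u^+$ and $u^-$ separately to produce two distinct nonnegative normalized minimizers of the Rayleigh quotient, contradicting simplicity) is the standard argument and is correct.
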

From the above lemma and the minimum principle Theorem~\ref{thm:strong min},  we have the following result.
\begin{Cor}\label{cor:first eigen support}
Let $A$ be a $p$-quasi open set of finite measure such that $\lambda_1(A)$ is simple and let $u$ be a nonnegative first eigenfunction. Then 
$\{u>0\}$  is a $p$-quasi connected component of $A$ and $\lambda_1(A)=\lambda_1(\{u>0\})$.
\end{Cor}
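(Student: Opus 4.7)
The plan is to first invoke Lemma~\ref{lem:first eigen} directly on the eigenfunction $u$ of $\lambda_1(A)$, which immediately yields $\lambda_1(A)=\lambda_1(\{u>0\})$. Thus the corollary reduces to showing that the $p$-quasi open set $\{u>0\}$ coincides, up to a set of zero $p$-capacity, with a single $p$-quasi connected component of $A$.

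To this end, I would decompose $A=\bigcup_{j\in\mathbb N}U_j\cup E$ into its pairwise disjoint $p$-quasi connected components as in \eqref{eq:num}, with $\pcap(E)=0$. For each $j$, Lemma~\ref{lem:quasi res} ensures that the extension-by-zero $v_j$ of $u_{|U_j}$ lies in $W^{1,p}_0(U_j)\subseteq W^{1,p}_0(A)$. Since $u\ge 0$ satisfies $-\lap_p u=\lambda_1(A)u^{p-1}\ge 0$ in the weak sense on $A$, each $v_j$ is a nonnegative fine supersolution on $U_j$, so the minimum principle Theorem~\ref{thm:strong min}, applied on the $p$-quasi connected set $U_j$, forces the dichotomy that either $v_j>0$ q.e. in $U_j$ or $v_j\equiv 0$ q.e. in $U_j$. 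Setting $J:=\{j:v_j\not\equiv 0\}$, this gives $\{u>0\}=\bigcup_{j\in J}U_j$ up to zero $p$-capacity, and $J\neq\emptyset$ since $u\not\equiv 0$.

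The remaining task, which I view as the crux of the argument, is to show $|J|=1$ via the simplicity hypothesis. The key observation is that every such $v_j$ is in fact a nontrivial eigenfunction of the $p$-Laplacian on the \emph{whole} set $A$ with eigenvalue $\lambda_1(A)$. To verify this, given any $\phi\in W^{1,p}_0(A)$ I would apply Lemma~\ref{lem:quasi res} once more to localize $\phi$ to $U_j$ (its extension-by-zero lies in $W^{1,p}_0(U_j)\subseteq W^{1,p}_0(A)$); since $\nabla v_j=\nabla u\,\chi_{U_j}$ a.e., the weak formulation for $v_j$ tested against $\phi$ collapses to the weak formulation for $u$ tested against this localized $\phi$, which is satisfied. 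If $|J|\ge 2$, then two such $v_j$'s would be nontrivial eigenfunctions with disjoint essential supports, hence linearly independent, contradicting the simplicity of $\lambda_1(A)$. Therefore $|J|=1$, and $\{u>0\}$ is a single $p$-quasi connected component of $A$, as required. The main obstacle is the globalization step showing that each $v_j$ is an eigenfunction on all of $A$, but this falls out immediately once the localization tool of Lemma~\ref{lem:quasi res} is applied in both directions.
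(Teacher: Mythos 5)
Your proposal is correct and follows essentially the same route as the paper: the dichotomy from the minimum principle (Theorem~\ref{thm:strong min}) on each $p$-quasi connected component, Lemma~\ref{lem:quasi res} to restrict $u$ (and test functions) to a component and verify that each nontrivial restriction is itself an eigenfunction of $\lambda_1(A)$ on all of $A$, and finally the observation that two such restrictions with disjoint supports are linearly independent eigenfunctions, contradicting simplicity. The only cosmetic difference is that the paper exhibits the pair $\tilde u_1\pm\tilde u_2$ whereas you use the restrictions themselves; both equally contradict the one-dimensionality of the eigenspace.
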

\begin{proof} 
First, observe that by the minimum principle if $u$ is not identically zero in a $p$-quasi connected component $A'$ then it is strictly positive in $A'$.

We argue by contradiction assuming that there exist two different quasi connected components $A_1$ and $A_2$ of $A$ where $u$ is not identically zero and denote by $\tilde u_i$ the restriction of $u$ to $A_i$, for $i=1,2$. By Lemma~\ref{lem:quasi res}, we have $\tilde u_i\in W^{1,\,p}_0(A_i)$. Moreover, 
$$-\lap_p\tilde u_i=\lambda_1(A) |\tilde u_i|^{p-2}\tilde u_i, \qquad\text{on $A_i$}.$$
Hence, the  functions $\tilde u_1\pm\tilde u_2$ are two linearly independent eigenfunctions of $\lambda_1(A)$, which contradicts the assumption that $\lambda_1(A)$ is simple. 
\end{proof}
Using 
Lemma~\ref{lem:first eigen} and Corollary~\ref{cor:first eigen support},  we have the following 
proposition, which was proved  for open, connected sets in 
\cite{Lind1}, \cite{Lind2}. 
\begin{Prop}\label{prop: first eigen prop}
Let $A\subset \R^n$ be a $p$-quasi open set of finite measure. We have the 
following. 
\begin{enumerate}[(i)]
\item If  $\lambda_k$ is a sequence of eigenvalues such that $\lambda_k\to \lambda$, then 
$\lambda$ is also an eigenvalue. 
\item If the first eigenvalue $\lambda_1(A)$ is simple then it is isolated. 
\end{enumerate}
\end{Prop}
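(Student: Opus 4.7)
I would pick normalized eigenfunctions $u_k \in \mathcal M_p(A)$ associated with $\lambda_k$. Since $\int_A |\nabla u_k|^p\dx = \lambda_k$ is bounded, $(u_k)$ is bounded in $W^{1,p}_0(A)$; combined with $|A| < \infty$ and the compact embedding into $L^p(A)$, I extract a subsequence with $u_k \rightharpoonup u$ weakly in $W^{1,p}_0(A)$ and $u_k \to u$ strongly in $L^p(A)$, so that $\|u\|_{L^p(A)} = 1$. Testing the eigenvalue equation for $u_k$ with $u_k - u \in W^{1,p}_0(A)$, the right-hand side $\lambda_k \int_A |u_k|^{p-2} u_k (u_k - u)\dx$ tends to zero by strong $L^p$ convergence, and $\int_A |\nabla u|^{p-2} \nabla u \cdot \nabla(u_k - u)\dx \to 0$ by weak $W^{1,p}_0$ convergence; subtracting and applying Lemma~\ref{lem:ineq} upgrades to strong convergence $u_k \to u$ in $W^{1,p}_0(A)$. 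I can then pass to the limit in the weak formulation to conclude that $u$ is a nontrivial normalized eigenfunction for $\lambda$, with $\lambda > 0$ by \eqref{andr}.

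\textbf{Part (ii), setup and Case A.} I argue by contradiction, assuming that there exist eigenvalues $\lambda_k \ne \lambda_1(A)$ with $\lambda_k \to \lambda_1(A)$, and letting $u_k$ be associated normalized eigenfunctions. By (i) and simplicity, along a subsequence $u_k$ converges strongly in $W^{1,p}_0(A)$ to $\pm u_1$, where $u_1 \geq 0$ is the normalized first eigenfunction; Corollary~\ref{cor:first eigen support} identifies the quasi-connected component $A_0 := \{u_1 > 0\}$ of $A$ with $\lambda_1(A_0) = \lambda_1(A)$. I then split into two cases according to the sign of $u_k$. \emph{Case A} (sign-definite $u_k$): after flipping signs I take $u_k \geq 0$ with $u_k \to u_1$; strong $L^p$ convergence and $u_1 > 0$ q.e.\ on $A_0$ give $\|u_k\|_{L^p(A_0)} > 0$ for large $k$. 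By Lemma~\ref{lem:quasi res}, $u_k|_{A_0} \in W^{1,p}_0(A_0)$ is a nonnegative eigenfunction of $\lambda_k$ in $A_0$; the minimum principle (Theorem~\ref{thm:strong min}) forces $u_k|_{A_0} > 0$ q.e.\ in $A_0$, and Lemma~\ref{lem:first eigen} applied in $A_0$ yields $\lambda_k = \lambda_1(A_0) = \lambda_1(A)$, contradicting $\lambda_k \ne \lambda_1(A)$.

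\textbf{Case B.} \emph{Case B} ($u_k$ sign-changing along the subsequence): I set $A_k^\pm := \{\pm u_k > 0\}$ and $\tilde u_k^\pm := u_k^\pm / \|u_k^\pm\|_{L^p}$. Lemma~\ref{lem:first eigen}, applied to $u_k$ and to $-u_k$, yields that $\tilde u_k^\pm$ extended by zero is a normalized first eigenfunction of $A_k^\pm$, so $\|\tilde u_k^\pm\|_{L^p(A)} = 1$ and $\int_A|\nabla \tilde u_k^\pm|^p\dx = \lambda_k \to \lambda_1(A)$. Along a further subsequence, $\tilde u_k^\pm \rightharpoonup v^\pm$ weakly in $W^{1,p}_0(A)$ and strongly in $L^p(A)$, with $v^\pm \geq 0$ and $\|v^\pm\|_{L^p} = 1$. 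Lower semicontinuity of the gradient norm together with the Rayleigh characterization \eqref{fili1} forces $\int_A |\nabla v^\pm|^p\dx = \lambda_1(A)$, so $v^\pm$ is a normalized first eigenfunction; by simplicity, $v^\pm = u_1$. Passing to a further subsequence with a.e.\ convergence, for a.e.\ $x \in A_0$ both $\tilde u_k^+(x)$ and $\tilde u_k^-(x)$ converge to $u_1(x) > 0$, so both are strictly positive for large $k$, contradicting $\tilde u_k^+ \tilde u_k^- \equiv 0$.

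\textbf{Main obstacle.} The key technical step is the upgrade from weak to strong $W^{1,p}_0$ convergence in part~(i) via Lemma~\ref{lem:ineq}, without which one cannot pass to the limit in the nonlinear term $|\nabla u_k|^{p-2}\nabla u_k$ to identify $u$ as an eigenfunction. In Case~B the subtle point will be to exploit the pointwise identity $\tilde u_k^+ \tilde u_k^- \equiv 0$ against the simultaneous a.e.\ convergence of both $\tilde u_k^\pm$ to the strictly positive function $u_1$ on the positive-measure set $A_0$, which forces the contradiction $|A_0| = 0$.
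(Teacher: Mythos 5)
Your proposal is correct. Part (i) is the standard Lindqvist argument, which is exactly what the paper invokes (it simply cites \cite{Lind2}); your explicit version, using Lemma~\ref{lem:ineq} to upgrade weak to strong $W^{1,p}_0$ convergence before passing to the limit in the nonlinear term, is fine. In part (ii) your overall strategy coincides with the paper's (argue by contradiction, identify the limit of the $u_k$ with a first eigenfunction, exploit strict positivity of $u_1$ on the component $A_0=\{u_1>0\}$, and reduce the non-quasi-connected case to $A_0$ via Lemma~\ref{lem:quasi res} and Corollary~\ref{cor:first eigen support}), but your treatment of the sign-changing case is genuinely different. The paper applies the Faber--Krahn-type bound \eqref{andr} to each nodal domain to obtain the quantitative estimate $\min\{|\{u_k>0\}|,|\{u_k<0\}|\}\ge c(n,p)\lambda_k^{-n/p}$, and then shows that $\limsup_k\{u_k<0\}$ has positive measure yet must be contained in $\{u_1\le 0\}$ up to a null set, contradicting $u_1>0$ q.e. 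You instead normalize the two nodal parts separately, observe via Lemma~\ref{lem:first eigen} that each is a first eigenfunction of its own nodal domain with energy $\lambda_k\to\lambda_1(A)$, and conclude by lower semicontinuity, \eqref{fili1} and simplicity that both $\tilde u_k^+$ and $\tilde u_k^-$ converge to $u_1$, which is incompatible with $\tilde u_k^+\tilde u_k^-\equiv0$ at a single point of $A_0$ where both sequences converge. Your route avoids the quantitative measure bound and the $\limsup$-of-sets argument at the price of running the ``minimizing sequences of the Rayleigh quotient converge to $u_1$'' argument twice; the paper's route only tracks the limit of $u_k$ itself and is slightly more economical, but both are sound. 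Your Case A (sign-definite eigenfunctions, handled by the minimum principle and Lemma~\ref{lem:first eigen} on $A_0$) corresponds to the paper's preliminary observation that, on a quasi-connected set, eigenfunctions associated with $\lambda>\lambda_1$ must change sign.
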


\begin{proof}
Statement (i) can be proved with the same   argument used in the proof  of \cite[Th.~3]{Lind2}. 

In order to prove (ii), we first consider the case when $A$ is $p$-quasi connected. Under this assumption, we claim that the  eigenfunctions corresponding to an eigenvalue $\lambda>\lambda_1(A)$ must change sign. 
To see this, assume that $u$ is a nonnegative eigenfunction.
By Theorem~\ref{thm:strong min}, $\{u>0\}$ coincides with $A$ up to a set of 
capacity zero. Then by Lemma \ref{lem:first eigen}  we conclude that 
$u$ is a first eigenfunction and the claim follows. 

To show that  the first eigenvalue 
is isolated we argue by contradiction, as in \cite[Th.~9]{Lind2}. Assume that there exists a sequence of eigenvalues $\lambda_k>\lambda_1(A)$ converging to $\lambda_1(A)$ and let 
 $u_k$ be the normalized eigenfunction corresponding to $\lambda_k$. We may assume that, up to a not relabelled subsequence, $ u_k$ converges weakly in $W^{1,\,p}_0(A)$, strongly in $L^p(A\cap B_r)$ for all $r>0$ and a.e. to a function $u\in W^{1,\,p}_0(A)$. Moreover, since $A$ has finite measure, a simple argument based on Sobolev inequality shows  (also for $p=n$) that $\|u\|_{L^p(A)}=1$. Hence, by lower semicontinuity, we get that 
$$ \int_A |\gr u|^p\dx \leq \lim_{k\to \infty}\lambda_k =\lambda_1(A). $$
Thus $u$ is a normalized  first eigenfunction. By Theorem~\ref{thm:strong min},  we may also assume without loss of generality  that $u>0$ q.e. on $A$. Now, 
arguing as in the proof of \eqref{andr}, one has
$$
\min\big\{ \,|\{u_k>0\}|\,,\, |\{u_k<0\}|\,\big\}\geq c(n,p)\lambda_k^{-\frac{n}{p}}$$ 
for all $k\in\N$.
Therefore, setting $A^+:=\limsup_{k\to\infty}\{u_k>0\}$, $A^-:=\limsup_{k\to\infty}\{u_k<0\}$, from the previous inequality, recalling that $A$ has finite measure, we have immediately that
$$
\min\{|A^+|,|A^-|\}\geq c(n,p)\lambda_1(A)^{-\frac{n}{p}} .$$ 
On the other hand, since the sequence $u_k$ is converging a.e. to $u$, we have also that $A^+\subset\{u\geq0\}$ and $A^-\subset\{u\leq0\}$ up to a set of zero Lebesgue measure. But the latter inclusion is impossible since $|A^-|>0$ and $u>0$ q.e. in $A$. This contradiction proves the result in this case.

Let us now assume that $A$ is not $p$-quasi connected and again let us argue by contradiction assuming that there exists a sequence of eigenvalues $\lambda_k>\lambda_1(A)$  converging to $\lambda_1(A)$. As before, we denote by $u_k$ a normalized eigenfunction of $\lambda_k$. Again, we may assume that, up to a not relabelled subsequence, $u_k$ converges strongly in $L^p_\loc(A)$ and a.e. in $A$ to a nonnegative normalized first eigenfunction $u$.
By Corollary~\ref{cor:first eigen support} the set $\{u>0\}$ is a $p$-quasi connected component of $A$. Thus by Lemma~\ref{lem:quasi res} we have that $u_k\in W^{1,\,p}_0(\{u>0\})$ for all $k$. Therefore each eigenfunction $u_k$ has to change sign in $\{u>0\}$, otherwise by Lemma~\ref{lem:first eigen} $u_k$ is a first eigenfunction in $\{u>0\}$ and $\lambda_k=\lambda_1(\{u>0\})=\lambda_1(A)$, which is impossible. Then the conclusion of the proof goes exactly as the preceeding case.
\end{proof}

The following proposition extends to $p$-quasi open and $p$-quasi connected sets a property that is well known in the case of open sets, see for instance \cite{AH} or \cite{Bel-Kaw}. 

\begin{Prop}\label{extAH}
Let  $A$ be a $p$-quasi open and $p$-quasi connected set of finite measure. Then $\lambda_1(A)$ is simple.
\end{Prop}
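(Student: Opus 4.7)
The plan is to adapt to the $p$-quasi open setting the hidden-convexity approach of Belloni--Kawohl \cite{Bel-Kaw} (see also Brasco--Franzina \cite{Bra-Fra--hidconv}): I reduce to two strictly positive normalized first eigenfunctions, show their logarithmic gradients coincide by exhibiting a curve in $\M_p(A)$ along which the $p$-Dirichlet energy equals $\lambda_1(A)$, and then invoke Lemma~\ref{sob}. For the reduction, let $u$ be any normalized first eigenfunction. Since $|\gr|u||=|\gr u|$ a.e., the function $|u|$ still minimizes the Rayleigh quotient on $\M_p(A)$ and is therefore a nonnegative first eigenfunction satisfying $-\lap_p|u|=\lambda_1(A)|u|^{p-1}\geq0$; in particular $|u|$ is a nonnegative fine supersolution on $A$, not identically zero. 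Since $A$ is $p$-quasi connected, Theorem~\ref{thm:strong min} yields $|u|>0$ q.e.\ in $A$, so $\{u=0\}\cap A$ has zero $p$-capacity. The two disjoint $p$-quasi open sets $\{u>0\}$ and $\{u<0\}$ (which are $p$-quasi open by Theorem~\ref{thm:quasi fine}) thus cover $A$ up to a $p$-negligible set, and the $p$-quasi connectedness of $A$ forces one of them to be $p$-negligible. Hence $u$ has constant sign and, after possibly replacing $u$ by $-u$, $u>0$ q.e.\ in $A$.

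Now let $u,v$ be two normalized nonnegative first eigenfunctions, both positive q.e.\ on $A$. For $t\in(0,1)$ set
$$
w_t:=\bigl(tu^p+(1-t)v^p\bigr)^{1/p}.
$$
By the standard theory of $p$-means (see \cite{Bra-Fra--hidconv}) one has $w_t\in W^{1,p}_0(A)$ with $\|w_t\|_{L^p(A)}=1$, so $w_t\in\M_p(A)$; moreover, writing $\alpha=tu^p/w_t^p$ and $1-\alpha=(1-t)v^p/w_t^p$, a direct computation gives $\gr\log w_t=\alpha\,\gr\log u+(1-\alpha)\,\gr\log v$, and Jensen's inequality applied to the strictly convex function $|\cdot|^p$ yields the hidden-convexity estimate
$$
|\gr w_t|^p=w_t^p|\gr\log w_t|^p\leq t|\gr u|^p+(1-t)|\gr v|^p\qquad\text{a.e.\ in $A$.}
$$
Integrating and using $w_t\in\M_p(A)$ gives $\lambda_1(A)\leq\int_A|\gr w_t|^p\,dx\leq\lambda_1(A)$, so the pointwise inequality above must be an equality a.e.\ in $A$. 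The strict convexity of $|\cdot|^p$ (since $p>1$) then forces $\gr\log u=\gr\log v$ a.e.\ in $A$, i.e.\ $\gr u/u=\gr v/v$ a.e.\ in $A$.

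Finally, Lemma~\ref{sob} applied to $u,v\in W^{1,p}_0(A)$ (both positive q.e.\ with identical logarithmic gradients) produces a constant $\kappa>0$ with $u=\kappa v$ q.e.\ in $A$; the common normalization then forces $\kappa=1$. Thus the eigenspace of $\lambda_1(A)$ is one-dimensional and $\lambda_1(A)$ is simple. The only delicate point is the justification of $w_t\in W^{1,p}_0(A)$ and of the chain-rule computation on a quasi open set; both are standard once Step~1 has ensured that $u,v>0$ q.e.\ in $A$, so that $w_t$ is, up to a $p$-negligible set, a $C^1$ composition of strictly positive $p$-quasi continuous Sobolev functions.
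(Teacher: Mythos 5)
Your proof is correct, but it follows a genuinely different route from the paper's. The reduction to two strictly positive normalized first eigenfunctions is the same in both arguments (pass to $|u|$, apply the minimum principle of Theorem~\ref{thm:strong min}, use quasi connectedness to fix the sign). The divergence is in how one then derives $\gr u/u=\gr v/v$ a.e.: the paper uses the regularized Picone identity of Allegretto--Huang \cite{AH}, integrating the identity against the equation satisfied by $v$, with the denominator $v+\eps$ and a Fatou passage as $\eps\to0$; you instead use the hidden convexity of the $p$-Dirichlet energy along the curve $w_t=(tu^p+(1-t)v^p)^{1/p}$, in the spirit of Belloni--Kawohl \cite{Bel-Kaw} and \cite[Lemma 2.1]{Bra-Fra--hidconv}. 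Your route only exploits that $u$ and $v$ are both \emph{minimizers} of the Rayleigh quotient on $\M_p(A)$ (no integration by parts against the equation, no $\eps$-regularization), at the price of justifying $w_t\in W^{1,\,p}_0(A)\cap\M_p(A)$ and the a.e.\ chain rule; these are genuinely standard here, and indeed the paper itself uses exactly this construction on quasi open sets in the proof of Corollary~\ref{fili}, citing \cite[Lemma 2.1]{Bra-Fra--hidconv}. The paper's route avoids any discussion of the curve $w_t$ but needs the equation for $v$ and the limit $\eps\to0$. Both arguments then funnel into Lemma~\ref{sob}, which is the only point where the quasi open/quasi connected structure is essential, so the two proofs are of comparable depth; yours is arguably slightly more self-contained given that the hidden-convexity inequality is already invoked elsewhere in the paper.
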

\begin{proof} First, observe that if $u$ is a first eigenfunction, then also $|u|$ is a first eigenfunction. Thus, by Theorem~\ref{thm:strong min} $u\not=0$ q.e. in $A$. Therefore, since $A$ is $p$-quasi connected,  $u$ does not change sign in $A$. Thus, in order to prove $\lambda_1(A)$ is simple, it is enough to show that if $u,v$ are two nonnegative normalized first eigenfunctions, then $u=v$. 

To this end, fix $\eps>0$ and recall the following extension of the classical Picone's identity, see \cite{AH}. For every two nonnegative functions $u,v\in W^{1,p}(\R^n)$ it holds true that
\begin{align}\label{extAH0}
0&
\leq|\gr u|^p+(p-1)\frac{u^p}{(v+\eps)^p}|\gr v|^p-p\frac{u^{p-1}}{(v+\eps)^{p-1}}|\gr v|^{p-2}\gr v\cdot\gr u \\
&=|\gr u|^p-|\gr v|^{p-2}\gr\bigg(\frac{u^p}{(v+\eps)^{p-1}}\bigg)\cdot\gr v. \nonumber
\end{align}
Integrating the right hand side of the previous inequality in $A$ and using the fact that $v$ is a first eigenfunction we  get
$$
\int_{A}|\gr u|^p\,dx-\int_A|\gr v|^{p-2}\gr v\cdot\gr\bigg(\frac{u^p}{(v+\eps)^{p-1}}\bigg)\,dx
=\int_{A}|\gr u|^p\,dx-\lambda_1(A)\int_A\frac{u^pv^{p-1}}{(v+\eps)^{p-1}}\,dx.
$$
Therefore, recalling \eqref{extAH0} and using Fatou's lemma we have, letting $\eps\to0$,
$$
\int_A\bigg(|\gr u|^p+(p-1)\frac{u^p}{v^p}|\gr v|^p-p\frac{u^{p-1}}{v^{p-1}}|\gr v|^{p-2}\gr v\cdot\gr u\bigg)\,dx=0.
$$
Recalling that by the minimum principle $v>0$ q.e. in $A$, a simple argument shows that the equality above implies  that for a.e. $x\in A$
$$
\nabla u(x)=\frac{u(x)}{v(x)}\nabla v(x).
$$
The conclusion then follows from Lemma~\ref{sob}, recalling that $\|u\|_{L^p}=\|v\|_{L^p}$.
\end{proof}

\subsection{Variational characterisation of the second eigenvalue}
The fact that if  $\lambda_1(A)$ is simple then it is isolated  shows that there is a spectral gap between 
$\lambda_1(A)$ and the next eigenvalue. This naturally leads to the following 
definition of second eigenvalue, which is well posed 
due to Proposition \ref{prop: first eigen prop}. 
\begin{Def}[(Second Eigenvalue)]\label{def:sec eigen}
{\rm 
Let $A\subset \R^n$ be a $p$-quasi open set of finite measure. The second eigenvalue of the $p$-Laplacian on $A$ is defined as follows.
 \begin{equation}\label{eq:sec eigen}
  \lambda_2(A):= 
  \begin{cases}
   \ \min\set{\lambda > \lambda_1(A)}{\lambda \ \text{is an eigenvalue}}
   \qquad\quad &\text{if}\ \lambda_1(A)\ \text{is simple}\\
  \  \lambda_1(A) \qquad &\text{otherwise}
  \end{cases}
 \end{equation}
 }
\end{Def}
Equipped with \eqref{eq:sec eigen}, now we restate Theorem \ref{thm:mainthm1}, which is a {\it mountain pass characterisation} of the second eigenvalue. This characterisation is the main result of this paper. 
 \begin{Thm}\label{prop:mount pass}
Let $A$ be a $p$-quasi open set of finite measure, $u_1 \in W^{1,\,p}_0(A)$ be a normalized 
 eigenfunction of $\lambda_1(A)$,  and 
 $$\Gamma(u_1,-u_1)
 =\set{\gamma\in C\big([0,1],\M_p(A)\big)}{\gamma(0) = u_1,\ \gamma(1) = -u_1},$$
 where $\mathcal M_p(A)$ is the manifold defined in \eqref{manip}. Then
 \begin{equation}\label{eq:mount pass}
  \lambda_2(A) =\min_{\gamma\in\Gamma(u_1,-u_1)}
  \bigg[\,\max_{w\in\gamma([0,1])} \int_A |\gr w|^p \dx\,\bigg]. 
 \end{equation}
\end{Thm}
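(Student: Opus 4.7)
The plan is to prove both inequalities in \eqref{eq:mount pass}, noting first that $\Gamma(u_1,-u_1)$ is nonempty (since $\mathcal M_p(A)$ is an infinite-dimensional $L^p$-sphere, which is path-connected).

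\textbf{Lower bound} $\displaystyle\inf_\gamma\max_w E(w)\geq\lambda_2(A)$. If $\lambda_1(A)$ is not simple then $\lambda_2(A)=\lambda_1(A)$ and the bound is trivial since $E\geq\lambda_1(A)$ on $\mathcal M_p(A)$. If $\lambda_1(A)$ is simple I would apply Theorem~\ref{thm:Mpass} with $u_0=u_1$, $u_1^{\text{mp}}=-u_1$, and verify its hypotheses. The Palais--Smale condition was established in Lemma~\ref{utile}. The mountain-pass geometry follows from Proposition~\ref{prop: first eigen prop}(ii): since $\lambda_1(A)$ is isolated, a short argument using Palais--Smale together with the strict positivity of $\rho<\tfrac12\|u_1-(-u_1)\|_X$ yields
$$
\inf\bigl\{E(w):w\in\mathcal M_p(A),\ \|w-u_1\|_X=\rho\bigr\}>\lambda_1(A),
$$
and likewise at $-u_1$ by the symmetry $E(w)=E(-w)$. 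The resulting min-max value $\alpha$ is a critical value of $\widetilde E$ with $\alpha>\lambda_1(A)$, hence by Remark~\ref{marc} an eigenvalue, and by Definition~\ref{def:sec eigen}, $\alpha\geq\lambda_2(A)$.

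\textbf{Upper bound and attainment.} I would split into cases. If $\lambda_1(A)$ is not simple, Proposition~\ref{extAH} forces $A$ to be quasi-disconnected; using the decomposition \eqref{eq:num} one writes $u_1$ as a sum of normalized first eigenfunctions supported on pairwise disjoint quasi-connected components attaining $\lambda_1(A)$, and rotating their signs continuously while keeping the $p$-norm equal to $1$ (possible by disjointness of supports) produces a path from $u_1$ to $-u_1$ on which $E\equiv\lambda_1(A)=\lambda_2(A)$. If $\lambda_1(A)$ is simple, let $u_2$ be a normalized second eigenfunction, set $\alpha=\|u_2^+\|_{L^p(A)}$, $\beta=\|u_2^-\|_{L^p(A)}$, and consider the middle curve
$$
\sigma(t)=\frac{t\,u_2^+/\alpha-(1-t)\,u_2^-/\beta}{\bigl(t^p+(1-t)^p\bigr)^{1/p}},\qquad t\in[0,1].
$$
By Lemma~\ref{lem:first eigen}, $u_2^+/\alpha$ is a normalized first eigenfunction of $\{u_2>0\}$ with eigenvalue $\lambda_2(A)$, and similarly for $u_2^-/\beta$; by disjointness of supports a direct computation gives $\sigma(t)\in\mathcal M_p(A)$ and $E(\sigma(t))\equiv\lambda_2(A)$.

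The remaining ingredient is the construction of two flanking curves connecting $u_1$ to $u_2^+/\alpha$ and $-u_2^-/\beta$ to $-u_1$ with $E\leq\lambda_2(A)$ along them. This is the main obstacle, and the plan is to obtain them as the promised Lemma~\ref{lem:path}: starting from $v_0=u_2^+/\alpha$, define a time-discrete minimizing-movement scheme for the restriction $\widetilde E$ to $\mathcal M_p(A)$ with $L^p$-distance, yielding approximants $v_h:A\times[0,\infty)\to\mathcal M_p(A)$ which are weak solutions of a time-discrete version of the doubly nonlinear flow \eqref{eq:evol1}. Along each $v_h$, the energy $E(v_h(t))$ is strictly decreasing from $\lambda_2(A)$, and the Euler--Lagrange step preserves nonnegativity. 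Passing to a limit $v$ weakly in $W^{1,p}((0,\infty),L^p(A))$ and strongly in $W^{1,p}_0(A)$ for a.e.\ $t$, one checks that $E(v(t))<\lambda_2(A)$ for every $t>0$ and that $v(t)\to u_1$ in $W^{1,p}_0(A)$ as $t\to\infty$. The symmetric construction from $-u_2^-/\beta$ produces the second flanking curve by $u\mapsto -u$.

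I expect the delicate steps to be: (a) proving strict decay $E(v(t))<\lambda_2(A)$ survives the passage to the limit (the limit $v$ itself need not satisfy \eqref{eq:evol1}), requiring one to work at the level of the approximants $v_h$ and exploit the strong convergence in $W^{1,p}_0(A)$ for a.e.\ $t$; and (b) identifying the $t\to\infty$ limit as $u_1$. For (b) any subsequential limit $w_\infty$ is, by strict monotonicity and Palais--Smale, a nonnegative eigenfunction with eigenvalue strictly less than $\lambda_2(A)$, hence by Definition~\ref{def:sec eigen} a first eigenfunction; simplicity of $\lambda_1(A)$ together with sign-preservation along the flow then identifies $w_\infty=u_1$. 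Concatenating the three curves yields a path in $\Gamma(u_1,-u_1)$ with $\max E=\lambda_2(A)$, proving the upper bound and the attainment simultaneously.
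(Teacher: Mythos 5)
Your strategy coincides with the paper's: the lower bound via Theorem~\ref{thm:Mpass} (with Lemma~\ref{utile} for Palais--Smale and the isolation of a simple $\lambda_1(A)$ from Proposition~\ref{prop: first eigen prop} supplying the mountain-pass geometry), and the upper bound via a three-piece path whose flanking curves come from the minimizing-movement Lemma~\ref{lem:path}. The lower-bound half and the sub-case you treat are fine. However, your case analysis for the upper bound has a genuine gap: you tacitly assume that $u_2$ changes sign and, crucially, that $u_2^+$ and $u_2^-$ are not eigenfunctions --- which is precisely the hypothesis of Lemma~\ref{lem:path} and can fail. Take $A$ to be the disjoint union of two balls $U,V$ with $\lambda_1(U)<\lambda_1(V)<\lambda_2(U)$: then $\lambda_1(A)=\lambda_1(U)$ is simple, $\lambda_2(A)=\lambda_1(V)$, and one may take $u_2\geq0$ supported in $V$, so $\beta=\|u_2^-\|_{L^p(A)}=0$ and your middle curve $\sigma$ is undefined; moreover $u_2^+/\alpha$ is itself an eigenfunction, so the minimizing movement started there is stationary and produces no flanking curve. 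The correct dichotomy is whether $u_2$ changes sign in $U=\{u_1>0\}$: if it does, the minimum principle (Theorem~\ref{thm:strong min}) shows $u_2^{\pm}$ are not eigenfunctions and your construction goes through; if it does not, one shows $u_2\equiv0$ q.e.\ in $U$ and replaces the whole three-piece path by the single disjoint-support curve \eqref{cbf}, on which $E\leq\lambda_2(A)$ because the Rayleigh quotient of a sum with disjoint supports is a convex combination of the two Rayleigh quotients.

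A second, smaller gap sits in the non-simple case: your ``rotation'' of the components of $u_1$ only works if $u_1$ charges at least two quasi-connected components. If $u_1$ is supported in a single component $U$, there is nothing to rotate, and no continuous path in $\M_p(A)$ from $u_1$ to $-u_1$ stays inside the multiples of $u_1$. One must import a second eigenfunction: by non-simplicity there is a first eigenfunction $v\neq\pm u_1$, and Proposition~\ref{extAH} (uniqueness on quasi-connected sets) prevents $v$ from living only on $U$, so some component $U'$ disjoint from $U$ carries a first eigenfunction by Lemma~\ref{lem:quasi res}, and the curve \eqref{cbf} again concludes. Finally, note that Lemma~\ref{lem:path} as stated only gives convergence to $u_1$ \emph{or} $-u_1$, so you must either substantiate your sign-preservation claim for the discrete scheme (plausible, since replacing a competitor $w$ by $|w|$ does not increase the incremental functional when the previous step is nonnegative) or simply adjust the signs of the middle curve a posteriori; with these repairs the argument matches the paper's.
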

The  result above was proved in the case of open sets  in \cite{Bra-Fra--hks}. 
However, the techniques used therein cannot be adopted in the 
setting of quasi open sets. It is noteworthy that when $\lambda_1(A)$ is simple the  characterisation 
in \eqref{eq:mount pass} coincides with the one given using the Krasnoselskii genus. This latter definition was used by  Anane-Tsouli \cite{An-Ts} to prove the existence of spectral gap between the first and the second eigenvalue. We refer the reader to 
\cite{Lind1}, \cite{Struwe}, \cite{Juu-Lind}  for further details. 

\subsubsection{A path of decreasing $p$-energy}\noindent
\\ 
To prove Theorem \ref{prop:mount pass}, we
construct an appropriate path, so that the $p$-Dirichlet energy decreases throughout the 
path and remains lower than $\lambda_2(A)$. The idea is similar to the one used in \cite{Cue-Fig-Gos}, but the construction of the path in our case is completely different.  Indeed, our construction relies on the theory of minimizing movements, a  technique introduced by De Giorgi and developed in the book by 
Ambrosio-Gigli-Savar\'e \cite{Amb-Gig-Sav}. To this end, we need to introduce the basic notation and definitions. For further details we refer to the first two chapters of \cite{Amb-Gig-Sav}.

Let $(\mathcal S,d)$ be a complete metric space and  $\Phi:\mathcal S\to(-\infty,+\infty]$. Denote by 
\begin{equation}\label{effe}
\mathscr D(\Phi):=\set{v\in \mathcal S}{\Phi(v)<\infty}
\end{equation}
 the {\it effective domain} of $\Phi$, i.e., the set of points where $\Phi$ is finite. Given a point $v\in \mathscr D(\Phi)$, the  {\it local slope of $\Phi$ at $v$} is defined by setting
\begin{equation}\label{locslo}
|\partial\Phi|(v):= \limsup_{w\to v}\frac{(\Phi(v)-\Phi(w))^+}{d(v,w)}\ .
\end{equation}

In order to apply the results of \cite{Amb-Gig-Sav} we now specify our choice for $(\mathcal S,d)$ and $\Phi$. More precisely, in this section we are going to take as a metric space 
the space $L^p(A)$ with the distance induced by the norm, where $A$ is a quasi open set of finite measure. The functional $\Phi$ will be defined as follows
\begin{equation}\label{AGS1}
\Phi(v):= 
\begin{cases}
\displaystyle E(v) & \text{if $v\in \mathcal M_p(A)$} \cr
+\infty & \text{if $v\in L^p(A)\setminus\mathcal M_p(A)$}, \cr
\end{cases}
\end{equation}
where $E$ is the functional defined  in \eqref{casopart} and $\mathcal M_p(A)$ is given in \eqref{manip}.
Next lemma is a key ingredient in the proof of the existence of a path having all the properties stated in Lemma~\ref{lem:path}.
\begin{Lem}\label{tech}
Let $A \subset \R^n$ be a $p$-quasi open set of finite measure and let $\Phi$ be the functional defined in \eqref{AGS1}. For all $v\in\mathcal M_p(A)$, we have  
\begin{equation}\label{tech1}
|\partial\Phi|(v)\geq \frac{p}{2}\|\lap_pv+E(v)|v|^{p-2}v\|_{W^{-1,\,p^\prime}\!(A)}.
\end{equation}
Moreover, if $v_0\in\mathcal M_p(A)$ is not an eigenfunction for the $p$-Laplacian,  there exist  $c_0,\delta>0$ such  that 
\begin{equation}\label{tech2}
\|\lap_pv+E(v)|v|^{p-2}v\|_{W^{-1,\,p^\prime}\!(A)}\geq c_0\quad
\text{for all}\ v\in\mathcal M_p(A) \ \ \text{satisfying}\ \ \|v-v_0\|_{L^p(A)}<\delta.
\end{equation}
\end{Lem}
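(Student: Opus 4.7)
The plan is to handle both inequalities by perturbing $v$ along explicit curves in the constraint manifold $\M_p(A)$ and exploiting the fact that $F:=\lap_p v+E(v)|v|^{p-2}v\in W^{-1,\,p^\prime}\!(A)$ satisfies $\langle F,v\rangle=0$, which follows from testing $-\lap_p v$ against $v$.

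For \eqref{tech1}, given $\phi\in W^{1,\,p}_0(A)$ I will consider the normalised curve $w_t:=(v+t\phi)/\|v+t\phi\|_{L^p(A)}\in\M_p(A)$, which is well defined for $|t|$ small. A direct first-order expansion, differentiating the numerator and denominator of $E(w_t)$, gives
$$
E(v)-E(w_t)\,=\,pt\,\langle F,\phi\rangle+o(t),\qquad \|w_t-v\|_{L^p(A)}\,=\,t\,\|\psi\|_{L^p(A)}+o(t)\qquad(t\to 0^+),
$$
where $\psi:=\phi-v\int_A|v|^{p-2}v\phi\dx$ is the natural tangential projection of $\phi$ at $v$. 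Choosing $\phi$ with $\langle F,\phi\rangle>0$ (which automatically forces $\psi\ne 0$, since $\psi=0$ would make $\phi$ proportional to $v$ and then $\langle F,\phi\rangle=0$) and feeding these expansions into the definition \eqref{locslo} of the local slope yields $|\partial\Phi|(v)\geq p\langle F,\phi\rangle/\|\psi\|_{L^p(A)}$. A one-line H\"older estimate using $\||v|^{p-1}\|_{L^{p^\prime}(A)}=\|v\|_{L^p(A)}^{p-1}=1$ gives $\|\psi\|_{L^p(A)}\leq 2\|\phi\|_{L^p(A)}\leq 2\|\phi\|_{W^{1,\,p}_0(A)}$; taking the supremum over admissible $\phi$ produces \eqref{tech1}.

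For \eqref{tech2} I will argue by contradiction, assuming a sequence $v_n\in\M_p(A)$ with $v_n\to v_0$ in $L^p(A)$ and $F_n:=\lap_p v_n+E(v_n)|v_n|^{p-2}v_n\to 0$ in $W^{-1,\,p^\prime}\!(A)$. The main obstacle is that $L^p$ closeness does not a priori bound $E(v_n)$, so the crucial first step is to rule out the scenario $E(v_n)\to\infty$. Dividing $F_n$ by $E(v_n)\geq\lambda_1(A)>0$ and using the elementary bound $\|\lap_p v_n\|_{W^{-1,\,p^\prime}\!(A)}\leq E(v_n)^{(p-1)/p}$, one finds that in this scenario $|v_n|^{p-2}v_n\to 0$ in $W^{-1,\,p^\prime}\!(A)$. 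On the other hand, $v_n\to v_0$ in $L^p(A)$ forces $|v_n|^{p-2}v_n\to|v_0|^{p-2}v_0$ in $L^{p^\prime}\!(A)\hookrightarrow W^{-1,\,p^\prime}\!(A)$, whence $v_0\equiv 0$, contradicting $\|v_0\|_{L^p(A)}=1$.

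Hence $E(v_n)$ is bounded, and standard compactness gives, along a subsequence, $v_n\rightharpoonup v_0$ weakly in $W^{1,\,p}_0(A)$ and $E(v_n)\to\Lambda\geq\lambda_1(A)$. From $F_n\to 0$ and the continuity of $s\mapsto|s|^{p-2}s$ from $L^p(A)$ into $L^{p^\prime}\!(A)$, we deduce $\lap_p v_n\to -\Lambda|v_0|^{p-2}v_0$ in $W^{-1,\,p^\prime}\!(A)$. Pairing $\lap_p v_n-\lap_p v_0$ with $v_n-v_0$ and invoking the standard monotonicity inequality for the $p$-Laplacian (Lemma~\ref{lem:ineq}), exactly as in the proof of Lemma~\ref{utile}, upgrades the convergence to $v_n\to v_0$ strongly in $W^{1,\,p}_0(A)$. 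Passing to the limit in the identity $F_n=\lap_p v_n+E(v_n)|v_n|^{p-2}v_n$ then gives $\lap_p v_0+\Lambda|v_0|^{p-2}v_0=0$, so $v_0$ is an eigenfunction of the $p$-Laplacian on $A$, contradicting the hypothesis.
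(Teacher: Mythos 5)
The proposal is correct and follows essentially the same route as the paper: the slope bound \eqref{tech1} is obtained from the same normalised curve $w_t=(v+t\varphi)/\|v+t\varphi\|_{L^p(A)}$ together with the bound $\|\psi\|_{L^p(A)}\le 2\|\varphi\|_{L^p(A)}$, and \eqref{tech2} is proved by the same contradiction/compactness argument (ruling out $E(v_n)\to\infty$, upgrading to strong $W^{1,\,p}_0$-convergence via Lemma~\ref{lem:ineq}, and passing to the limit in the equation). The only differences are cosmetic: you establish boundedness of $E(v_n)$ by dividing by $E(v_n)$ and using $L^{p^\prime}\!(A)\hookrightarrow W^{-1,\,p^\prime}\!(A)$ rather than testing directly against $v_0$, and you pair the monotonicity inequality against the limit $v_0$ rather than running a Cauchy argument; both variants are valid.
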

\begin{proof}
Fix $v\in\mathcal M_p(A)$ and  $\varphi\in W^{1,\,p}_0(A)$, not parallel to $v$. Setting $w_t=(v+t\varphi)/\|v+t\varphi\|_{L^p(A)}$, a simple calculation shows that
$$
\lim_{t\to0^+}\frac{\Phi(w_t)-\Phi(v)}{\|w_t-v\|_{L^p(A)}}=p\frac{\langle-\lap_pv-E(v)|v|^{p-2}v\ ,\ \varphi\rangle}{\|\langle|v|^{p-2}v,\varphi\rangle v-\varphi\|_{L^p(A)}},
$$
where, as usual, we denote 
$$
\langle-\lap_pv,\varphi\rangle=\int_A|\gr v|^{p-2}\gr v\cdot\gr\varphi\,dx
\quad\text{and}\quad\langle|v|^{p-2}v,\varphi\rangle=\int_A|v|^{p-2}v\varphi\,dx.
$$
Thus, observing that $\|\langle|v|^{p-2}v,\varphi\rangle v-\varphi\|_{L^p(A)}\leq2\|\varphi\|_{L^p(A)}$, we have
\begin{align*}
|\partial\Phi|(v)
&\geq p\sup\bigset{\frac{|\langle\lap_pv+E(v)|v|^{p-2}v,\varphi \rangle|}{\|\langle|v|^{p-2}v,\varphi\rangle v-\varphi\|_{L^p(A)}}}{\varphi\in W^{1,\,p}_0(A), \, \varphi\not=tv \ \text{for $t\in\R$}} \\
&\geq p\sup\bigset{\frac{|\langle\lap_pv+E(v)|v|^{p-2}v,\varphi \rangle|}{2\|\varphi\|_{W^{1,\,p}_0(A)}}}{\varphi\in W^{1,\,p}_0(A)} \\
& =\ \frac{p}{2}\|\lap_pv+E(v)|v|^{p-2}v\|_{W^{-1,\,p^\prime}(A)}.
\end{align*}
This proves \eqref{tech1}. 

In order to prove \eqref{tech2} we argue by contradiction assuming that  \eqref{tech2} does not hold. If so,  there exists a sequence  $v_h\in\mathcal M_p(A)$ converging to $v_0$ in $L^p(A)$ and such that  
$$\lap_p v_h+E(v_h)|v_h|^{p-2}v_h\to0\ \ \text{in}\ W^{-1,\,p^\prime}(A).$$ 
Observe that the sequence $v_h$ is bounded in $W^{1,\,p}_0(A)$. In fact, if for a not relabelled subsequence $\|\gr v_h\|_{L^p(A)}\to+\infty$, then we have
$$
\int_A|v_h|^{p-2}v_hv_0\,dx=\frac{1}{E(v_h)}\int_A|\gr v_h|^{p-2}\gr v_h\cdot\gr v_0\,dx+\frac{1}{E(v_h)}\langle\lap_p v_h+E(v_h)|v_h|^{p-2}v_h,v_0\rangle.
$$
Thus, by H\"older inequality, we would have
$$
 \int_A|v_h|^{p-2}v_hv_0\,dx\leq \frac{\|\gr v_0\|_{L^p(A)}}{\|\gr v_h\|_{L^p(A)}}+\eps_h\|v_0\|_{W^{1,\,p}_0(A)},
 $$
where $\eps_h\to0$ as $h\to+\infty$, from which we would conclude that $v_0=0$, which is impossible since $v_0\in\mathcal M_p(A)$. Thus the sequence $E(v_h)$ is bounded and, up to a subsequence, we may assume that it converges to some number $E_0\geq E(v_0)>0$.  Moreover, since $\lap_p v_h+E(v_h)|v_h|^{p-2}v_h\to0$ in $W^{-1,\,p^\prime}(A)$ and $v_h\to v_0$ in $L^p(A)$,  also $\lap_p v_h$ converges strongly in $W^{-1,\,p^\prime}(A)$. Thus, arguing exactly as in the final part of the proof of Lemma~\ref{utile}, we conclude that $v_h\to v_0$ in $W^{1,\,p}_0(A)$.  Note that from the strong convergence of $v_h$ to $v_0$ in $W^{1,\,p}_0(A)$ and of $\lap_pv_h$ in $W^{-1,\,p^\prime}(A)$, we have that indeed $\lap_pv_h\to\lap_pv_0$ in $W^{-1,\,p^\prime}(A)$. Thus we get that 
$$\lap_pv_0+E_0|v_0|^{p-2}v_0=0,$$ which is impossible, since $v_0$ is not an eigenfunction. This contradiction concludes the proof of \eqref{tech2} and hence 
the proof of 
the lemma.
 \end{proof}
 We are now ready to give the proof of the next crucial lemma, which provides the construction of a low energy path connecting the first eigenfunction of the $p$-Laplacian to a function which is not an eigenfunction. With this lemma in hand, the proof of Theorem~\ref{prop:mount pass} will follow quickly.
\begin{Lem}\label{lem:path}
Let $A\subset \mathbb R^n$ be a $p$-quasi open set of finite measure. 
Suppose that $\lambda_1(A) $ is simple and let $u_1$ be the first nonnegative normalized eigenfunction. If $v_0 \in \M_p(A)$ is not an eigenfunction 
and         
 $ \lambda_1(A) < E(v_0) \leq \lambda_2(A) $, then there exists 
 a curve $ v\in C^{0,(p-1)/p}\big([0,\infty),\M_p(A)\big)\cap W^{1,\,p}\big([0,\infty),L^p(A)\big) $ with $v(0)= v_0$, such that the 
 following hold:
 \begin{align}
\label{1} 
  (i)&\quad E(v(t))< \lambda_2(A) \quad \forall\ t>0, \quad\text{and}\quad \int_0^{\infty}\|v'(t)\|^p_{L^p(A)}\,dt\,\leq E(v_0);\\
 \label{2} (ii)&\quad \lim_{t\to \infty} E(v(t)) =\lambda_1(A);\\
 \label{3}(iii)&\quad  \lim_{t\to \infty} v(t)=u_1 \quad \text{or} \quad \lim_{t\to \infty} v(t)=-   u_1 \ \ \text{in} \ \ W^{1,\,p}_0(A).
 \end{align}
\end{Lem}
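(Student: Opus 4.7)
The plan is to obtain $v$ as a curve of maximal slope for the extended $p$-energy $\Phi$ of \eqref{AGS1} on the metric space $(L^p(A),\|\cdot\|_{L^p(A)})$, through De Giorgi's minimizing movement scheme (\cite{Amb-Gig-Sav}). For each $h\in\N$, set $\tau=1/h$, $v_h^{0}=v_0$, and recursively pick $v_h^{k+1}\in L^p(A)$ as any minimizer of
\begin{equation*}
w\ \longmapsto\ \Phi(w)+\frac{1}{p\,\tau^{p-1}}\|w-v_h^k\|_{L^p(A)}^p.
\end{equation*}
Existence follows from the direct method: $\Phi$ is coercive and weakly lower semicontinuous on $W^{1,\,p}_0(A)$, and since $A$ has finite measure Sobolev's embedding ensures that the constraint $\|w\|_{L^p(A)}=1$ is preserved under weak $W^{1,\,p}_0(A)$-convergence. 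Interpolating in the standard way produces the piecewise affine curve $v_h:[0,\infty)\to L^p(A)$.

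The minimality yields the discrete estimate $\Phi(v_h^{k+1})+\frac{1}{p\tau^{p-1}}\|v_h^{k+1}-v_h^k\|_{L^p(A)}^p\leq\Phi(v_h^k)$, whose telescoping gives a uniform bound on $\int_0^\infty\|v'_h(t)\|_{L^p(A)}^p\,dt$ and monotone decay of $k\mapsto\Phi(v_h^k)$. By the abstract compactness results in Chapters 2--3 of \cite{Amb-Gig-Sav}, a subsequence satisfies $v_h\rightharpoonup v$ in $W^{1,\,p}\big([0,\infty);L^p(A)\big)$, $v_h(t)\to v(t)$ strongly in $L^p(A)$ for every $t\geq 0$, and strongly in $W^{1,\,p}_0(A)$ for a.e.~$t$; the limit $v$ is a curve of maximal slope of $\Phi$ satisfying the energy identity
\begin{equation*}
\Phi(v(t))+\int_0^t\|v'(s)\|_{L^p(A)}^p\,ds\,=\,\Phi(v_0),\qquad t\geq 0,
\end{equation*}
together with the pointwise relation $\|v'(t)\|_{L^p(A)}^{p-1}=|\partial\Phi|(v(t))$ for a.e.~$t$. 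This yields at once the $(p-1)/p$ H\"older regularity of $v$, the integral bound in (i), and the monotonicity $E(v(t))\leq E(v_0)\leq\lambda_2(A)$.

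For the strict inequality in (i), suppose $E(v(t^{*}))=\lambda_2(A)$ for some $t^{*}>0$: monotonicity forces $E(v(t))\equiv\lambda_2(A)$ on $[0,t^{*}]$, and the energy identity forces $v\equiv v_0$ on $[0,t^{*}]$. But $v_0$ is not an eigenfunction, so Lemma~\ref{tech} combined with \eqref{tech1} and \eqref{tech2} gives $|\partial\Phi|(v_0)\geq c_0 p/2>0$, contradicting the relation $\|v'\|^{p-1}=|\partial\Phi|(v)\equiv 0$ along the stationary curve. For (ii), set $\lambda_\infty:=\lim_{t\to\infty}E(v(t))\in[\lambda_1(A),\lambda_2(A))$; the energy identity produces $s_n\to\infty$ with $\|v'(s_n)\|_{L^p(A)}\to 0$, hence $|\partial\Phi|(v(s_n))\to 0$. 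Extracting $v(s_n)\to v_\infty$ strongly in $W^{1,\,p}_0(A)$, inequality \eqref{tech1} forces $\lap_p v_\infty+\lambda_\infty|v_\infty|^{p-2}v_\infty=0$, so $v_\infty$ is a normalised eigenfunction. By Proposition~\ref{prop: first eigen prop}(ii), $\lambda_1(A)$ is isolated; hence $\lambda_\infty=\lambda_1(A)$, and simplicity gives $v_\infty\in\{u_1,-u_1\}$.

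Convergence of the entire trajectory in (iii) then follows from connectedness of the $\omega$-limit set: the tail $\{v(t):t\geq t_0\}$ is relatively compact in $L^p(A)$ as a bounded set in $W^{1,\,p}_0(A)$, so $\omega(v_0)$ is connected, and, being contained in $\{u_1,-u_1\}$, must be a singleton. Strong $W^{1,\,p}_0(A)$ convergence is then immediate from $\|\gr v(t)\|_{L^p(A)}^p=E(v(t))\to E(u_1)=\|\gr u_1\|_{L^p(A)}^p$, weak convergence, and uniform convexity of $W^{1,\,p}_0(A)$. The main obstacle I anticipate is the rigorous passage to the limit in the minimising movement scheme, specifically the lower semicontinuity of $|\partial\Phi|$ along the discretisation and the derivation of the a.e.~identity relating $\|v'\|$ and $|\partial\Phi|(v)$; this is where the abstract variational interpolation of \cite{Amb-Gig-Sav} and the concrete lower bound \eqref{tech1} from Lemma~\ref{tech} play the decisive role. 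A secondary delicate point is that the constrained nature of the scheme requires checking that minimisers stay on $\mathcal M_p(A)$, which is standard but relies on the Euler--Lagrange computation of Remark~\ref{marc}.
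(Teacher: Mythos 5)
Your overall strategy (De Giorgi's minimizing movements for the constrained functional $\Phi$ of \eqref{AGS1}, discrete energy decay, H\"older compactness, identification of the $\omega$-limit with $\pm u_1$ via the spectral gap) is the same as the paper's. But there is a genuine gap exactly at the point you flag as an ``anticipated obstacle'': you invoke the abstract results of \cite{Amb-Gig-Sav} to conclude that the limit $v$ is a curve of maximal slope satisfying the \emph{energy identity} and the pointwise relation $\|v'(t)\|_{L^p(A)}^{p-1}=|\partial\Phi|(v(t))$. Those conclusions are not free: in the AGS framework they require additional hypotheses (lower semicontinuity of the local slope, the slope being a strong upper gradient, or some form of geodesic $\lambda$-convexity), none of which is available for the $p$-Dirichlet energy restricted to the non-convex manifold $\M_p(A)$. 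The paper explicitly acknowledges that the convergences obtained from the scheme do \emph{not} suffice to identify $v$ as a solution of the gradient-flow equation, and it never proves the energy identity. Since your proofs of the strict inequality in (i) (``the energy identity forces $v\equiv v_0$'') and of (ii) (``$\|v'(s_n)\|\to0$, hence $|\partial\Phi|(v(s_n))\to0$'') both rest on these unproved identities, the argument as written does not close.

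The paper circumvents this in two ways that you would need to reproduce. First, instead of the energy identity it derives only the energy \emph{inequality} \eqref{eq:en id}, obtained by combining De Giorgi's variational interpolation with the purely definitional estimates (3.2.16)--(3.2.17) of \cite{Amb-Gig-Sav} and the concrete lower bound \eqref{tech1} of Lemma~\ref{tech}; together with the quantitative bound \eqref{tech2} near the non-eigenfunction $v_0$, the residual term in \eqref{eq:en id} is bounded below on an initial time interval, which gives $E(v(t))<E(v_0)\le\lambda_2(A)$ for $t>0$ without ever knowing that $v$ is a maximal-slope curve. Second, for the long-time behaviour it passes to the limit directly in the discrete Euler--Lagrange equation \eqref{eq:weakscheme}: the nonlinear time-derivative terms $|\hat v_i'|^{p-2}\hat v_i'$ converge only weakly to some $q$ (not identifiable with $|v'|^{p-2}v'$), but \eqref{tech7} allows one to select times $t_h\to\infty$ with $\|q(t_h)\|_{L^{p'}(A)}\to0$; a monotonicity argument (Lemma~\ref{lem:ineq}) then upgrades $v(t_h)$ to strong $W^{1,\,p}_0(A)$ convergence to an eigenfunction, and the spectral gap forces the eigenvalue to be $\lambda_1(A)$. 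Your concluding connectedness argument for the full trajectory is essentially the paper's, but the strong $W^{1,\,p}_0$ convergence of $v(s_n)$ that you ``extract'' also requires this monotonicity step rather than mere compactness. In short: right scheme, but the two analytic pivots --- the energy inequality replacing the identity, and the limit passage in the Euler--Lagrange equation replacing the maximal-slope characterization --- are missing.
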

\begin{proof}
{\it Step 1 (The discrete scheme).}
Fix  $\tau >0$ and set $ v^\tau_0 := v_0$. Then, for all $k\geq1$ we define recursively the function  $v^\tau_k$ by selecting a  minimizer of the following problem:
\begin{equation}\label{eq:minscheme}
\min_{w\in \M_p(A)}\bigg[\ \frac{1}{\tau^{p-1}}\int_A |w-v^\tau_{k-1}|^p\dx
+  \int_A |\gr w|^p\dx\ \bigg].
\end{equation}
The existence of a minimizer follows from coercivity and weak lower semicontinuity. Moreover, there exists a Lagrange multiplier $\sigma^\tau_k \in \R$ such that for every $\phi\in W^{1,\,p}_0(A)$ we have 
\begin{equation}\label{eq:weakscheme}
\frac{1}{\tau^{p-1}}\int_A |v^\tau_k-v^\tau_{k-1}|^{p-2}(v^\tau_k-v^\tau_{k-1})\phi\ dx
+  \int_A |\gr v^\tau_k|^{p-2}\gr v^\tau_k\cdot\gr\phi \ dx 
=\sigma^\tau_k \int_A |v^\tau_k|^{p-2}v^\tau_k\,\phi\ dx.
\end{equation}
Since $v^\tau_k \in \M_p(A)$, choosing $\phi = v^\tau_k$, we have 
\begin{equation}\label{eq:sigma k}
 \sigma^\tau_k=\int_A |\gr v^\tau_k|^p\dx
 + \frac{1}{\tau^{p-1}}\int_A |v^\tau_k-v^\tau_{k-1}|^{p-2}(v^\tau_k-v^\tau_{k-1})v^\tau_k\dx
\end{equation}
 Then, comparing  the values of the functional in \eqref{eq:minscheme} at 
$v^\tau_k$ and $v^\tau_{k-1}$  we get for all $k\geq1$
\begin{equation}\label{eq:Ediff}
\frac{1}{\tau^{p-1}}\|v^\tau_k -v^\tau_{k-1}\|^p_{L^p(A)}
\leq E(v^\tau_{k-1})- E(v^\tau_k).
\end{equation}
Now we choose an uniform partition of 
$[0,\infty) $ with $\{0,\tau, 2\tau,\ldots\}$ and define the piecewise 
constant flow $ v^\tau :[0,\infty) \to\ \M_p(A)$ by setting $ v^\tau(t)(x)\ = v^\tau_{[t/\tau]}(x)$ for all $t>0,\,x\in\R^n$, where 
$[\cdot]$ denotes the integer part function. Similarly, we denote by $\sigma^\tau$ the piecewise constant function from $[0,\infty)$ to $\R$ defined by setting $ \sigma^\tau(t)= \sigma^\tau_{[t/\tau]}$. Using \eqref{eq:Ediff}, we have that 
for all $ t> s\geq0$ with $[t/\tau]>[s/\tau]$
\begin{align*}
  \|v^\tau(t) -v^\tau(s) \|_{L^p(A)} \ &\leq \sum_{k= [s/\tau]+1}^{[t/\tau]} 
\|v^\tau_k -v^\tau_{k-1}\|_{L^p(A)}\\
&\leq \ \Big([t/\tau]-[s/\tau]\Big)^\frac{p-1}{p} 
\bigg(\sum_{k= [s/\tau]+1}^{[t/\tau]} \|v^\tau_k -v^
\tau_{k-1}\|^p_{L^p(A)}\bigg)^\frac{1}{p} \nonumber \\
&\leq \  \Big([t/\tau]-[s/\tau]\Big)^\frac{p-1}{p}
\tau^\frac{p-1}{p}\Big[ E(v^\tau_{[s/\tau]})- E(v^\tau_{[t/\tau]})\Big]^\frac{1}{p}. \nonumber
\end{align*}
Thus, we find that 
$$
 \|v^\tau(t) -v^\tau(s) \|_{L^p(A)}\leq  (E(v_0))^\frac{1}{p}( t-s +\tau)^\frac{p-1}{p}.  
$$
 From this inequality, recalling that the functions $\{ v^\tau\}_{0<\tau<1}$ are  bounded in $W^{1,\,p}_0(A)$ uniformly with respect to $t$, we deduce, thanks to a refined version of  Arzel\`a-Ascoli theorem (see \cite[Prop. 3.3.1]{Amb-Gig-Sav}), that there exists a sequence $\tau_i\to0$ such that for all $t>0$ the curves $v^{\tau_i}(t)$ converge in $L^p(A)$, uniformly with respect to $t\in[0,T]$, to a curve  $v \in C^{\,0\,,(p-1)/p}([0,\infty),L^p(A))$. Moreover, since for every $t>0$ the sequence $v^{\tau_i}(t)$ is bounded in $W^{1,p}_0(A)$, a simple compactness argument shows that it converges weakly in $W^{1,p}_0(A)$ to $v(t)$.
Thus $v(t)\in \M_p(A)$ for all $t\geq 0$. \\

{\it Step 2 (Convergence of the discrete scheme).}
We set
 $$
 \hat v_i(t):= \frac{(t-(k-1)\tau_i)v^{\tau_i}_k+(k\tau_i-t)v^{\tau_i}_{k-1}}{\tau_i}\qquad\text{for $t\in[(k-1)\tau_i,k\tau_i]$}
 $$ 
and observe that, up to another not relabelled subsequence,  the functions $\hat v_i'$ converge  weakly in $L^p([0,\infty),L^p(A))$ to $v'$. Indeed, this follows immediately from \eqref{eq:Ediff} since for every $T>0$
\begin{equation}\label{tech3}
\int_0^T\|\hat v_i'(t)\|_{L^p(A)}^p\,dt\,\leq \sum_{k=1}^{[T/\tau_i]+1}(E(v^{\tau_i}_{k-1})- E(v^{\tau_i}_k))\leq E(v_0).
\end{equation}
Note that from this inequality we get in particular the second estimate in \eqref{1}.
Next, observe that, again up to a not relabelled subsequence, we may assume that the functions $\sigma^{\tau_i}$ converge weakly in $L^{p^\prime}(0,T)$ for all $T>0$. In fact, from \eqref{eq:sigma k} we have, using H\"older inequality and recalling \eqref{tech3}, 
\begin{align*}
\int_0^T|\sigma^{\tau_i}(t)|^{p^\prime}\,dt\,&
\leq C\,TE(v_0)^{p^\prime} +C \int_0^T\bigg(\int_A|\hat v_i'(t)|^{p-1}|v^{\tau_i}(t)|\,dx\bigg)^{\frac{p}{p-1}}dt \\
& \leq C\,TE(v_0)^{p^\prime}+ C\int_0^T\|\hat v_i'(t)\|_{L^p(A)}^p\,dt\leq C\big(TE(v_0)^{p^\prime}+E(v_0)),
\end{align*}
for a suitable constant $C$ depending only on $p$.
Finally, we claim that $v^{\tau_i}(t)\to v(t)$ strongly in $W^{1,\,p}_0(A)$ for a.e. $t>0$. To prove this last claim we are going to use \eqref{eq:weakscheme} and the convexity of the functional $E(v)$. Precisely, we have that for a.e. $t>0$,
\begin{align*}
\int_A|\gr v(t)|^p\,dx
\ &\geq \int_A|\gr v^{\tau_i}(t)|^p\,dx+p\int_A|\gr v^{\tau_i}(t)|^{p-2}\gr v^{\tau_i}(t)\cdot(\gr v(t)-\gr v^{\tau_i}(t))\,dx \\
& =\ \int_A|\gr v^{\tau_i}(t)|^p\,dx+p\sigma^{\tau_i}(t)\int_A|v^{\tau_i}(t)|^{p-2}v^{\tau_i}(t)(v(t)-v^{\tau_i}(t))\,dx \\
&\qquad\qquad -p\int_A|\hat v_i'(t)|^{p-2}\hat v_i'(t)(v(t)-v^{\tau_i}(t))\,dx.
\end{align*}
Integrating the above inequality with respect to time, with some easy calculations we get that for every $T>0$
\begin{align*}
\int_0^T\int_A|\gr v(t)|^p\,dxdt 
 \ &\geq \int_0^T\int_A|\gr v^{\tau_i}(t)|^p\,dxdt- p\int_0^T|\sigma^{\tau_i}(t)|\bigg(\int_A|v(t)-v^{\tau_i}(t)|^p\,dx \bigg)^{\frac1p}\,dt\\
& \qquad\qquad -p\int_0^T\int_A|\hat v_i'(t)|^{p-2}\hat v_i'(t)(v(t)-v^{\tau_i}(t))\,dxdt.
\end{align*}
Therefore, recalling that the $\sigma^{\tau_i}$ are bounded in $L^{p^\prime}(0,T)$, that $v^{\tau_i}$ converge to $v$ in $L^p$ locally uniformly with respect to $t$  and that the $\hat v_i'$ are bounded in $L^p([0,\infty),L^p(A))$, passing to the limit we immediately get
\begin{align*}
\int_0^T\int_A|\gr v(t)|^p\,dxdt 
\ &\geq \liminf_{i\to\infty}\int_0^T\int_A|\gr v^{\tau_i}(t)|^p\,dxdt \\
\ &\geq  \int_0^T\bigg(\liminf_{i\to\infty}\int_A|\gr v^{\tau_i}(t)|^p\,dx\bigg)dt\ \geq  \int_0^T\int_A|\gr v(t)|^p\,dxdt,
\end{align*}
where we used Fatou lemma and the lower semicontinuity of the energy $E$ with respect to the weak convergence in $W^{1,\,p}_0$. Thus we have proved that for a.e. $t>0$
\begin{equation}\label{tech4}
\int_A|\gr v(t)|^p\,dx
=\liminf_{i\to\infty}\int_A|\gr v^{\tau_i}(t)|^p\,dx.
\end{equation}
Note that by \eqref{eq:Ediff} for every $i$ the fuctions $t\mapsto E(v^{\tau_i}(t))$ are decreasing. Therefore, by Helly's lemma (see \cite[Lemma 3.3.3]{Amb-Gig-Sav}), there exists a not relabelled subsequence such that for every $t>0$ there exists the limit of $E(v^{\tau_i}(t))$. This shows that, up to a subsequence, the $\liminf$ in \eqref{tech4} is indeed a limit, hence $\|\nabla v^{\tau_i}(t)\|_{L^p(A)}\to\|\nabla v(t)\|_{L^p(A)}$.  This, together with the weak convergence in $W^{1,p}_0(A)$ of $v^{\tau_i}(t)$   to $v(t)$ proved in Step 1, implies that, up to a not relabelled subsequence,  $v^{\tau_i}(t)$ converges to $v(t)$ in $W^{1,\,p}_0(A)$ for a.e. $t>0$.\\

{\it Step 3 (An energy inequality).} We claim that there exists $c(p)>0$ such that for a.e. $t>0$
\begin{equation}\label{eq:en id}
\int_0^{t}\|v'(s)\|^p_{L^p(A)}\,ds+c(p)\int_0^{t}\|\lap_pv(s)+E(v(s))|v(s)|^{p-2}v(s)\|_{W^{-1,\,p^\prime}(A)}^{p^\prime}\,ds\leq E(v_0)-E(v(t)).
\end{equation}
To this end, we introduce a third kind of interpolation due to De Giorgi. For every $t\in(\tau_i(k-1),\tau_ik]$, $k\geq1$, we denote by $\tilde v_i(t)$ a minimizer of
$$
\min_{w\in \M_p(A)}\bigg[\ \frac{1}{(t-\tau_i(k-1))^{p-1}}\int_A |w-v^{\tau_i}_{k-1}|^p\dx
+  \int_A |\gr w|^p\dx\ \bigg].
$$
Just as in \eqref{eq:Ediff}, here we have that for every $t\in(\tau_i(k-1),\tau_ik]$
\begin{equation}\label{tech5}
\frac{1}{(t-\tau_i(k-1))^{p-1}}\|\tilde v_i(t) -v^{\tau_i}_{k-1}\|^p_{L^p(A)}
\leq  E(v^{\tau_i}_{k-1})- E(\tilde v_i(t)).
\end{equation}
Hence, we have that for every $t>0$
$$
\|\tilde v_i(t) -v^{\tau_i}(t)\|_{L^p(A)}\leq  E(v_0)^{\frac1p}\tau_i^{\frac{p-1}{p}}.
$$
and from this inequality we conclude at once that for all $T>0$ also the curves $\tilde v_i(t)$ converge strongly in $L^p(A)$ to $v(t)$ uniformly with respect to $t\in[0,T]$. Note also that from \eqref{tech5}, for every $t>0$ we have $E(\tilde v_i(t))\leq E(v^{\tau_i}(t))$. Thus, for a.e. $t>0$
\begin{align*}
E(v(t)) \leq \liminf_{i\to\infty}E(\tilde v_i(t))\leq \limsup_{i\to\infty}E(\tilde v_i(t))\leq \lim_{i\to\infty}E(v^{\tau_i}_k(t))=E(v(t)).
\end{align*}
Therefore we may conclude that also the functions $\tilde v_i(t)$ converge strongly in $W^{1,\,p}_0(A)$ to $v(t)$ for a.e. $t>0$.
Now a very general argument which uses only the definition and no special properties of the local slope defined in \eqref{locslo} shows that for the interpolation defined above one has for every $i$ and every $k\geq1$
$$
\int_0^{k\tau_i}\|\hat v_i'(s)\|^p_{L^p(A)}\,ds +\frac{(p-1)}{p^{p^\prime}}\int_0^{k\tau_i}(|\partial\Phi|(\tilde v_i(s)))^{p^\prime}\,ds\leq E(v_0)-E(v^{\tau_i}_k),
$$
see the inequalities (3.2.16) and (3.2.17) in \cite{Amb-Gig-Sav}, where $\Phi$ is defined as in \eqref{AGS1}.
Thus, recalling \eqref{tech1} we deduce that for all $i$ and  for all $t>0$,  setting $c(p):=(p-1)/2^{p^\prime}$,  we have
$$
\int_0^{t}\|\hat v_i'(s)\|^p_{L^p(A)}ds+c(p)\!\int_0^{t}\|\lap_p\tilde v_i(s)+E(\tilde v_i(s))|\tilde v_i(s)|^{p-2}\tilde v_i(s)\|_{W^{-1,\,p^\prime}(A)}^{p^\prime}ds\leq E(v_0)-E(v^{\tau_i}(t)).
$$
Recalling that $\hat v_i^\prime$ converges weakly in $L^p([0,\infty),L^p(A))$ to $v'$ and that $\tilde v_i(t)$ and  $v^{\tau_i}(t)$ converge in $W^{1,\,p}_0(A)$ to $v(t)$ for a.e. $t>0$,  \eqref{eq:en id} follows letting $i\to\infty$.\\

{\it Step 4 (Conclusion of the proof).}
By \eqref{eq:Ediff}, for every $i$, the function $t\mapsto E(v^{\tau_i}(t))$ is decreasing. Therefore, denoting by $Z_0\subset(0,\infty)$ a set of zero  $\mathcal L^1$ measure such  that $v^{\tau_i}(t)$ converges to  $v(t)$ in $W^{1,\,p}_0(A)$ for all $t\in(0,\infty)\setminus Z_0$, we have \begin{equation}\label{bon1}
\qquad\qquad\qquad\qquad E(v(t))\leq E(v(s))\quad\text{ for all $0<s<t$ with $s,t\not\in Z_0$}.
\end{equation}
Since $v \in C^{\,0\,,(p-1)/p}([0,\infty),L^p(A))$, if $s\to t$ then $\|v(s)-v(t)\|_{L^p(A)}\to0$, hence by lower semicontinuity we have also
\begin{equation}\label{bon2}
E(v(t))\leq\liminf_{s\to t}E(v(s)) \quad \text{for all $t>0$.}
\end{equation}
 Moreover, since by assumption $v_0$ is not an eigenfunction, from Lemma~\ref{tech} and from the fact that $v \in C^{\,0\,,(p-1)/p}([0,\infty),L^p(A))$, it follows that there exist $t_0, c_0>0$ such that 
$$\|\lap_pv(t)+E(v(t))|v(t)|^{p-2}v(t)\|_{W^{-1,\,p^\prime}(A)}\geq c_0 \ \ 
\text{for all}\ t\in[0,t_0].$$ 
Hence, \eqref{eq:en id}, \eqref{bon1} and \eqref{bon2} yield that $E(v(t))<E(v_0)\leq\lambda_2(A)$ for all $t>0$. This proves the first inequality in \eqref{1}. Note that the assumption that  $v_0$ is not an eigenfunction  is crucial for the validity of  such estimate. Indeed, if $v_0$ were an eigenfunction then the above construction would produce the limit flow $v(t)\equiv v_0$ for all $t>0$.

Now, let us set 
\begin{equation}\label{tech10}
\alpha:= \lim_{\substack{t\to+\infty \\ t\not\in Z_0}} E(v(t))< \lambda_2(A).
\end{equation}
This limit exists and it is strictly smaller than $\lambda_2(A)$ since the function $E(v(t))$ is decreasing for $t\not\in Z_0$ and $E(v(t))<\lambda_2(A)$ for all $t>0$. 


Note that \eqref{tech3} yields
$$
\int_0^{\infty}\||\hat v_i'(t)|^{p-2}\hat v_i'(t)\|_{L^{p^\prime}\!(A)}^{p^\prime}\,dt\,\leq  E(v_0),
$$
for every $i$. 
Therefore,  we may assume that, up to a not relabelled subsequence,   $|\hat v_i'|^{p-2}\hat v_i'$ converges weakly in $L^{p^\prime}([0,\infty),L^{p^\prime}(A))$  to a curve $q$ such that
\begin{equation}\label{tech7}
\int_0^{\infty}\|q(t)\|_{L^{p^\prime}\!(A)}^{p^\prime}\,dt\,\leq  E(v_0).
\end{equation}
Now, let us integrate \eqref{eq:weakscheme} in $(0,t)$ and let us pass to the limit as $i\to\infty$. From the weak convergence of $|\hat v_i'|^{p-2}\hat v_i'$ in $L^{p^\prime}([0,\infty),L^{p^\prime}(A))$ and all the convergences proved in Step 2 we have, that for all $t>0$
$$
\int_{0}^{t}\int_A q(t)\phi\ dxdt
+ \int_{0}^{t} \int_A |\gr v(t)|^{p-2}\gr v(t)\cdot\gr\phi \ dxdt
 =\int_{0}^{t}\sigma(t)\,dt \int_A |v(t)|^{p-2}v(t)\phi\ dx.
$$
for all $\phi\in\mathcal D$, where $\mathcal D$ is a dense sequence in $W^{1,\,p}_0(A)$. Differentiating this equality with respect to $t$ yields that for a.e. $t>0$ and for all $\phi\in\mathcal D$
\begin{equation}\label{tech8}
\int_A q(t)\phi\ dx
+ \int_A |\gr v(t)|^{p-2}\gr v(t)\cdot\gr\phi \ dx
= \sigma(t) \int_A |v(t)|^{p-2}v(t)\phi\ dx.
\end{equation}
By density, this equation holds for a.e. $t>0$ and for every $\phi\in W^{1,\,p}_0(A)$. Now, let us choose a sequence $t_h\in(0,\infty)\setminus Z_0$ such that \eqref{tech8} holds, $t_h\to+\infty$,  $\|q(t_h)\|_{L^{p^\prime}\!(A)}^{p^\prime}\to0$ as $h\to\infty$. Note that this is possible thanks to \eqref{tech7}. Observe that since the sequence $v(t_h)$ is bounded in $W^{1,p}_0(A)$ up to a subsequence it converges strongly in $L^p(A)$ and weakly in $W^{1,p}_0(A)$ to a function $w\in\M_p(A)$. Testing the equation satisfied by $v(t_h)$ with $v(t_h)$,  we have that
\begin{equation}\label{tech9}
\lim_{h\to\infty}\sigma(t_h)=\lim_{h\to\infty}\bigg[E(v(t_h))+ \int_Aq(t_h)v(t_h)\,dx\bigg]=\alpha.
\end{equation}
Let us now fix $k>h\geq1$ and let us test with $v(t_k)-v(t_h)$ the equation \eqref{tech8} satisfied by $v(t_k)$ and the equation satisfied by $v(t_h)$. Subtracting the two resulting equations we have
\begin{align*}
&\int_A \big(|\gr v(t_k)|^{p-2}\gr v(t_k)-|\gr v(t_h)|^{p-2}\gr v(t_h)\big)\cdot(\gr v(t_k)-\gr v(t_h))\,dx \\
&\qquad\qquad\qquad =\int_A(\sigma(t_k)|v(t_k)|^{p-2}v(t_k)-\sigma(t_h)|v(t_h)|^{p-2}v(t_h))(v(t_k)-v(t_h))\,dx\\
&\qquad\qquad\qquad \qquad\qquad-\int_A(q(t_k)-q(t_h))(v(t_k)-v(t_h))\,dx.
\end{align*}
From this equation, recalling Lemma~\ref{lem:ineq} and \eqref{tech9}, and using  the  convergence of $v(t_h)$ to $w$ in $L^p$ and the convergence of $q(t_h)$ to $0$ in $L^{p^\prime}(A)$ we get that the sequence $\gr v(t_h)$ converges to $\gr w$ in $L^p(A)$.  Now, considering \eqref{tech8} at the time $t_h$ and letting $h\to\infty$, from \eqref{tech9} we finally get that for all $\phi\in W^{1,\,p}_0(A)$
$$
\int_A |\gr w|^{p-2}\gr w\cdot\gr\phi \ dx
 =\alpha \int_A |w|^{p-2}w\phi\ dx.
$$
Therefore $w$ is an eigenfunction. Then, from \eqref{tech10} we deduce that $\alpha=\lambda_1(A)$ and that $w$ is either equal to $u_1$ or to $-u_1$. 

Finally, observe that   $E(v(t))\geq\lambda_1(A)$ for all $t>0$. From this inequality, \eqref{bon2} and the fact that $E(v(t))\to\lambda_1(A)$ as $t\to+\infty$, $t\not\in Z_0$, we conclude that
\begin{equation}\label{finedim}
\lim_{t\to+\infty}E(v(t))=\lambda_1(A).
\end{equation}
 This establishes \eqref{2}. 
 
 To conclude the proof we need to show that $v(t)\to w$ in $L^p(A)$ as $t\to+\infty$. To this end we argue by contradiction assuming that there exists a sequence $s_h$, with $s_h\to+\infty$, such that $\|v(s_h)-w\|_{L^p(A)}\geq c>0$ for all $h$. Then, since the sequence $v(s_h)$ is bounded in $W^{1,p}_0(A)$, we may assume that, up to a not relabelled subsequence, it converges strongly in $L^p(A)$ and weakly in $W^{1,p}_0(A)$ to some function $z\in\M_p(A),  z\not=w$ and that $0<s_h<t_h$ for all $h$. Then, from \eqref{finedim} it follows that $E(z)=\lambda_1(A)$. This means that $z$ is a normalized eigenvalue and thus, since $\lambda_1(A)$ is simple, $z=-w$.  In particular, for $h$ sufficiently large we have
 $$
 \|v(t_h)-w\|_{L^p(A)}<\frac12,\qquad \|v(s_h)+w\|_{L^p(A)}<\frac12.
 $$
Note that, since $v\in C([0,\infty),L^p(A))$, the function $f(t)=\|v(t)-w\|_{L^p(A)}-\|v(t)+w\|_{L^p(A)}$ is continuous. From the two inequalities above it follows that 
$$
f(s_h)\geq 2\|w\|_{L^p(A)}-2\|v(s_h)+w\|_{L^p(A)}>1.
$$
Similarly, we have that $f(t_h)<-1$. Therefore, there exist $r_h\in(s_h,t_h)$, $r_h\to+\infty$, such that $f(r_h)=0$. However, arguing as above, we have that up to a not relabelled subsequence, $v(r_h)$ converges in $L^p(A)$ either to $w$ or to $-w$, that is $f(r_h)$ converges either to $-2$ or to $2$, which is impossible. This contradiction proves \eqref{3}.
  \end{proof}

\begin{Rem}\label{rem:repar}
{\rm The fact that the limit for the path of Lemma \ref{lem:path} exists as $t\to \infty$ allows us to 
reparametrize the path to finite time, preserving continuity. Hence, for 
our purposes we will assume that $v \in C\big([0,1],\M_p(A)\big)$ with 
$ v(1) = u_1$ or $-u_1$. }
\end{Rem}

\begin{proof}[of Theorem \ref{prop:mount pass}]
Recalling that $\Gamma(u_1,-u_1)$ is the set of all continuous 
paths with values in $\mathcal M_p(A)$ joining $u_1$ to $-u_1$, let us define 
$$ \lambda := \inf_{\gamma\in\Gamma(u_1,-u_1)}
  \bigg[\,\max_{w\in\gamma([0,1])} \int_A |\gr w|^p \dx\,\bigg]. $$
Clearly, $\lambda_1(A)\leq\lambda$. Observe that to prove the result it is enough to show that 
 there exists an admissible curve $\gamma \in \Gamma(u_1,-u_1)$ such that 
 \begin{equation}\label{eq:semcont}
\max_{\,t\in[0,1]} \int_A |\gr \gamma(t)|^p \dx =\lambda_2(A).
\end{equation}
Indeed if $\lambda_1(A)$ is not simple from the previous equality we trivially have $\lambda=\lambda_2(A)=\lambda_1(A)$. On the other hand, if $\lambda_1(A)$ is simple, then by Theorem~\ref{thm:Mpass} and Lemma~\ref{utile} we have that $\lambda$ is an eigenvalue; since by Definition~\ref{def:sec eigen} there is no other eigenvalue between $\lambda_1(A)$ and 
$\lambda_2(A)$, from \eqref{eq:semcont} we get $\lambda=\lambda_2(A)$, thus  concluding the proof of the theorem.

$ \textbf{Case 1 :}\ \lambda_1(A)$ is simple. In this case, thanks to Lemma~\ref{lem:first eigen} we may assume with no loss of generality that $u_1\geq0$. Then we set $U:=\{u_1>0\}$ and recall that
by Corollary~\ref{cor:first eigen support}, $U$ is a $p$-quasi connected component of $A$. Denote by $u_2$ a normalized second eigenfunction. 

Assume first that $u_2$ changes sign in $U$, hence $u_2^+$ cannot be an eigenfunction, otherwise by the minimum principle either $u_2^+>0$ or $u_2^+=0$ q.e. in $U$. Similarly $u_2^-$ is not an eigenfunction. In this case the  goal is to construct a continuous curve on $\M_p(A)$ from $u_1$ to $-u_1$, 
 such that the energy $E$ reaches the maximum value
 $\lambda_2(A)$ at a point and and stays below this value elsewhere.
Wee denote, for $t\in[0,1]$, 
 \begin{equation}\label{eq:curves}
  w(t):=\frac{(1-t)u_2^+}{\|u_2^+\|_{L^p(A)}}-\frac{(1-(1-t)^p)^{1/p}u_2^-}{\|u_2^-\|_{L^p(A)}}.
 \end{equation}
Note that $w$ is a curve with values in $\mathcal M_p(A)$ connecting
$ u_2^+/\|u_2^+ \|_{L^p(A)}$ to $-u_2^-/\|u_2^-\|_{L^p(A)}$  and that $E(w(t))=\lambda_2(A)$ for all $t$. Since $u^+_2$ is not an eigenfunction, using Lemma \ref{lem:path} 
and Remark \ref{rem:repar}, we may construct two curves $v_i \in C\big([0,1],\M_p(A)\big)$, $i=1,2$, with the property that $E(v_i(t))\leq\lambda_2(A)$ for all $t\in[0,1]$ and such that $v_1$ connects $u^+_2/\|u^+_2\|_{L^p(A)}$ to $u_1$ and $v_2$ connects  $-u^-_2/\|u^-_2\|_{L^p(A)}$ to $-u_1$. Then, denoting by $\inv{v_1}$ the path $v_1$ covered in the opposite direction, we set 
$$\gamma:=\,\inv{v_1}*\,w\,*v_2,$$
where $*$ denotes the concatenation of two curves.
By this construction, it is evident that $\gamma$ is an admissible curve satisfying \eqref{eq:semcont}. 

Assume now that $u_2$ does not change sign in $U$, say $u_2 \geq0$ in $U$.  By the minimum principle, either $u_2=0$ or $u_2>0$ q.e. in $U$. In the latter case from Lemma \ref{lem:first eigen}, we have that 
$\lambda_2(A) = \lambda_1(U)=\lambda_1(A)$ which is impossible since $\lambda_1(A)$ is simple. Hence, $u_2=0$ q.e. in $U$. Following
Brasco-Franzina \cite{Bra-Fra--hks}, we define a curve $\gamma\in \Gamma(u_1,-u_1)$ by setting
\begin{equation}\label{cbf}
 \gamma(t):= \frac{\cos(\pi t)u_1\,+\,t(1-t)u_2}{\big(|\cos(\pi t)|^p\, +\, t^p(1-t)^p\big)^{1/p}} 
 \end{equation}
for all $t\in[0,1]$. As before, $\gamma$ is an admissible curve satisfying \eqref{eq:semcont}. 

$ \textbf{Case 2 :}$ $\lambda_1(A)$ is not simple. Assume first that $u_1$ is supported in a $p$-quasi connected component $U$ of $A$. Then   there exists another first eigenfunction $v$ which is different from both $u_1$ and $-u_1$. If $v$  is supported in $U$, then by Proposition~\ref{extAH} $v$ must coincide in $U$ either with $u_1$ or $-u_1$. Therefore there exists a $p$-quasi connected component $U'$ of $A$, with $\pcap(U\cap U')=0$ where $v$ is not identically zero. Denote by $u_2$ the restriction of $v$ to $U'$, normalized so to have $L^p$ norm equal to 1. Using Lemma~\ref{lem:quasi res} we have that $u_2$ is still a first eigenfunction. Moreover  \eqref{cbf} provides again a curve satisfying \eqref{eq:semcont}.

Finally if there exist two or more connected components where $u_1$ is not identically zero, let us denote by $U$ one of these connected components and let us set
$$
\gamma(t):= \frac{\cos(\pi t)u_1\chi_U\,+\,a(t)u_1\chi_{A\setminus U}}{\big(|\cos(\pi t)|^p\|u_1\|_{L^p(U)}^p\, +\,|a(t)|^p\|u_1\|_{L^p(A\setminus U)}^p\big)^{1/p}}, 
$$
where $a:[0,1]\to[-1,1]$ is a strictly decreasing smooth function such that $a(0)=1$, $a(1/2)>0$, $a(1)=-1$. Then it is easily checked that   $\gamma$ is again an admissible curve satisfying \eqref{eq:semcont}. 
\end{proof}
We conclude this section with the following simple consequence of Theorem~\ref{prop:mount pass}.
\begin{Cor}\label{fili}
Let $A\subset B$ be two $p$-quasi open sets of finite measure. Then $\lambda_i(B)\leq\lambda_i(A)$ for $i=1,2$.
\end{Cor}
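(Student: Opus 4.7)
The plan is to establish the two inequalities separately. The bound $\lambda_1(B)\leq\lambda_1(A)$ follows at once from the variational characterization \eqref{fili1}: every $u\in W^{1,\,p}_0(A)$ lies in $W^{1,\,p}_0(B)$, since its $p$-quasi continuous representative vanishes $p$-q.e.\ outside $A\subset B$; moreover $\gr u=0$ a.e.\ outside $A$, so that $\|u\|_{L^p(B)}=\|u\|_{L^p(A)}$ and $\int_B|\gr u|^p\dx=\int_A|\gr u|^p\dx$. Hence $\M_p(A)\subset\M_p(B)$ and the infimum in \eqref{fili1} is monotone non-increasing in the ambient set.

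For the second eigenvalue the strategy is to build an admissible path on $B$ whose maximum energy is at most $\lambda_2(A)$ and then invoke the mountain pass characterization of Theorem \ref{prop:mount pass}. Let $u_1^A,u_1^B$ be normalized nonnegative first eigenfunctions on $A$ and $B$ respectively. If $\lambda_2(B)\leq\lambda_1(A)$, the conclusion is immediate since $\lambda_1(A)\leq\lambda_2(A)$. Hence I assume $\lambda_2(B)>\lambda_1(A)\geq\lambda_1(B)$, which in particular forces $\lambda_2(B)>\lambda_1(B)$, so that $\lambda_1(B)$ is simple by Definition \ref{def:sec eigen}. In the subcase $\lambda_1(B)=\lambda_1(A)$, the function $u_1^A$, viewed as an element of $\M_p(B)$, attains the infimum \eqref{fili1} on $B$ and is therefore a first eigenfunction on $B$; by simplicity $u_1^A=\pm u_1^B$, and any minimizing path from Theorem \ref{prop:mount pass} on $A$ is admissible on $B$ with the same maximum energy $\lambda_2(A)$.

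The main case is $\lambda_1(B)<\lambda_1(A)<\lambda_2(B)$. Since $\lambda_1(B)$ is simple it is isolated by Proposition \ref{prop: first eigen prop}, so no eigenvalue of the $p$-Laplacian on $B$ lies in the interval $(\lambda_1(B),\lambda_2(B))$; in particular, $u_1^A$ regarded as an element of $\M_p(B)$ with energy $\lambda_1(A)$ in this gap cannot be an eigenfunction on $B$. Applying Lemma \ref{lem:path} on $B$ with $v_0=u_1^A$ yields a path $v:[0,1]\to\M_p(B)$ from $u_1^A$ to $v(1)\in\{u_1^B,-u_1^B\}$ along which the energy $\int_B|\gr v(t)|^p\dx$ is monotone non-increasing, hence bounded above by $\lambda_1(A)\leq\lambda_2(A)$. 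A crucial observation is that the minimizing movements scheme \eqref{eq:minscheme} is odd: if $(v^\tau_k)$ is a minimizing sequence starting from $u_1^A$ then $(-v^\tau_k)$ is one starting from $-u_1^A$. Hence $-v$ is a legitimate output of Lemma \ref{lem:path} from $v_0=-u_1^A$, with endpoint $-v(1)$ opposite to $v(1)$. Concatenating $v^{-1}$, a minimizing mountain pass path $\gamma_A$ on $A$, and $-v$, with orientations chosen according to the sign of $v(1)$, yields an admissible path in $\Gamma(u_1^B,-u_1^B)$ on $B$ whose maximum energy equals $\max(\lambda_1(A),\lambda_2(A))=\lambda_2(A)$; Theorem \ref{prop:mount pass} then gives $\lambda_2(B)\leq\lambda_2(A)$. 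The principal subtlety is the endpoint ambiguity in Lemma \ref{lem:path}, which is precisely what the oddness of the minimizing movements scheme resolves.
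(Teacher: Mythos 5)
Your proof of $\lambda_1(B)\leq\lambda_1(A)$ is the same as the paper's. For $\lambda_2$ you take a genuinely different, and considerably heavier, route. The paper avoids all of your case analysis with one observation: by the hidden convexity of Brasco--Franzina, the curve $t\mapsto\big(t\,u_{1,A}^p+(1-t)\,u_{1,B}^p\big)^{1/p}$ (with $u_{1,A},u_{1,B}\geq0$ normalized first eigenfunctions) stays in $\M_p(B)$ and has energy at most $t\lambda_1(A)+(1-t)\lambda_1(B)\leq\lambda_1(A)$; concatenating it with a minimizing path on $A$ from Theorem~\ref{prop:mount pass} and with the reflected curve immediately produces an admissible path in $\Gamma(u_{1,B},-u_{1,B})$ of maximal energy $\lambda_2(A)$, with no need to distinguish whether $\lambda_1(B)$ is simple, whether $\lambda_1(B)=\lambda_1(A)$, or whether $u_{1,A}$ is an eigenfunction of $B$. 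Your alternative, running the minimizing-movement flow of Lemma~\ref{lem:path} on $B$ from $v_0=u_{1,A}$, does work in the main case, and your verification that $u_{1,A}$ is not an eigenfunction of $B$ (its energy would be an eigenvalue in the gap $(\lambda_1(B),\lambda_2(B))$) is correct; but it buys nothing over the convexity path and costs the extra subcases.

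Two caveats on your argument. First, you bound the energy along $v$ by $\lambda_1(A)$ by asserting that $E(v(t))$ is ``monotone non-increasing''; the statement of Lemma~\ref{lem:path} only gives $E(v(t))<\lambda_2(B)$, which is useless here since $\lambda_2(B)\leq\lambda_2(A)$ is exactly what you are trying to prove. The decay $E(v(t))\leq E(v_0)$ is indeed true, but it is established only inside the proof of the lemma (the energy inequality \eqref{eq:en id} together with \eqref{bon1} and \eqref{bon2}), so you must either cite that explicitly or restate the lemma with this stronger conclusion. Second, the appeal to the oddness of the scheme \eqref{eq:minscheme} is unnecessary: since $E$ is even and $\M_p(B)$ is symmetric, $-v$ is automatically a continuous path in $\M_p(B)$ from $-u_{1,A}$ to $-v(1)$ with the same energy profile, whether or not it arises as an output of Lemma~\ref{lem:path}. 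With these repairs your proof is correct.
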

\begin{proof}
The inequality $\lambda_1(B)\leq\lambda_1(A)$ is an immediate consequence of \eqref{fili1}.

To show that $\lambda_2(B)\leq\lambda_2(A)$, let us denote by $u_{1,A}$, $u_{1,B}$ two nonnegative normalized first eigenfunctions of $A$, and $B$ respectively.  Setting, for $t\in[0,1]$
$$
v_1(t)=\big(tu_{1,A}^p+(1-t)u_{1,B}^p\big)^{1/p},\qquad v_2=-v_1(t),
$$
we have, see \cite[Lemma 2.1]{Bra-Fra--hidconv},
$$
\int_\Omega|\nabla v_i(t)|^p\,dx\leq t\int_\Omega|\nabla u_{1,A}(t)|^p\,dx+(1-t)\int_\Omega|\nabla u_{1,B}(t)|^p\,dx\leq\lambda_1(A).
$$
On the other hand, thanks to Theorem~\ref{prop:mount pass} there exists a map $\gamma\in C([0,1],\M_p(A))$ such that $\gamma(0)=u_{1,A}$, $\gamma(1)=-u_{1,A}$ and  \eqref{eq:semcont} holds. Therefore, setting $w=v_1*\gamma*\inv{v_2}$, we have $w\in C([0,1],\M_p(B))$ and by construction
$$
\max_{\,t\in[0,1]} \int_B |\gr w(t)|^p \dx =\lambda_2(A).
$$
From this equality and Theorem~\ref{prop:mount pass} applied to $B$ we then get $\lambda_2(B)\leq\lambda_2(A)$. 
\end{proof}

\section{$\gamma_p$-lower semicontinuity of eigenvalues}\label{sec:gamma cont}

In this section we fix a bounded open set $\Om\subset \R^n$. Henceforth, given a  $p$-quasi open set $A \in \A_p(\Om)$  and a function $u\in W^{1,\,p}_0(A)$ we shall still denote by $u$  its zero extension in $\Omega\setminus A$, which is a function in $W^{1,\,p}_0(\Omega)$.

\subsection{$\gamma_p$-convergence and properties}
We now introduce the $\gamma_p$-convergence of $p$-quasi open sets. 
Differently from the case  $p=2$ considered in \cite{Bu-DM}, 
in the following definition we require the {\it weak} convergence in $W^{1,\,p}$ of the resolvents and not the strong one. Indeed, in view of the nonlinearity of the $p$-Laplacian, 
requiring the strong convergence of the resolvent operators would 
end up in a too strong topology in $\A_p(\Om)$ with very few compact  sets. 
Instead, the definition below provides plenty of compact 
families in $\A_p(\Om)$. However, the drawback is that now the proof of the
lower semicontinuity of the eigenvalues requires a more delicate argument. 
\begin{Def}\label{def:gamma}
{\rm
Let  $A_m, A$ be $p$-quasi open sets 
in $\A_p(\Om)$ for every $m\in\N$. We say that 
the sequence 
$A_m$ {\it $\gamma_p$-converges} to $A$ as $m\to \infty$ and 
we write $A_m \gto A$, if $ \Rsp{A_m}(f) \wto \Rsp{A}(f) $ weakly in $ W^{1,\,p}_0(\Om)$ 
for every $f\in W^{-1,\,p^\prime}\!(\Om)$,  
where the operators $\Rsp{A_m}$ are defined as in  Definition~\ref{def:res}.
}
\end{Def}
The above definition of $\gamma_p$-convergence of $p$-quasi 
open sets is strongly related to a convergence in the space 
$\mathcal M_0^p(\Omega)$ of Borel measures with values in $[0,\infty]$ 
vanishing on sets of zero $p$-capacity introduced by Dal Maso-Murat in \cite{DM-Mu97}. They say that a sequence $\mu_m\in\mathcal M_0^p(\Omega)$ {\it $\gamma$-converges} to a measure $\mu\in\mathcal M_0^p(\Omega)$ if for any $f\in W^{-1,\,p^\prime}\!(\Om)$ the solutions $u_m\in W^{1,\,p}_0(\Omega)$ of the  equations
$$
\int_\Omega|\gr u_m|^{p-2}\gr u_m\cdot\gr\varphi\,dx+ \int_\Omega|u_m|^{p-2}
u_m\varphi\ d\mu_m=\langle f,\varphi\rangle \qquad\text{for all $\varphi\in W^{1,\,p}_0(\Omega)$}
$$
converge weakly in $W^{1,\,p}(\Omega)$ to the solution of the corresponding equation with $\mu_m$ replaced by $\mu$.
It is evident that the Definition~\ref{def:gamma} is equivalent to the $\gamma$-convergence of the measures $\infty_{ A_m}$ to $\infty_{A}$ in the sense of Dal Maso-Murat \cite{DM-Mu97}, where by $\infty_{ A}$ we denote the measure in $\mathcal M_0^p(\Omega)$ defined by
\[
\infty_{A}(B):= \begin{cases}
0 \ \  &\text{if}\ \pcap(B\cap A) = 0,  \\
+\infty \ \ &\text{if}\ \pcap(B\setminus A) > 0
\end{cases}
\]
for all Borel sets $B\subset\Omega$. 
With this observation in mind, the next theorem follows immediately from a general 
result of Dal Maso and Murat, see \cite[Th.~6.3]{DM-Mu97}.
\begin{Thm}\label{DM6.3}
$A_m \gto A$ in $\A_p(\Om)$ if and only if  $\Rsp{A_m}(1) \wto \Rsp{A}(1) $ weakly in $ W^{1,\,p}_0(\Om)$.
\end{Thm}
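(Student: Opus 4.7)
The ``only if'' direction is immediate from Definition~\ref{def:gamma} by choosing $f\equiv 1$. For the converse, my plan is to reduce the statement to Theorem~6.3 of \cite{DM-Mu97} via the correspondence $A\mapsto\infty_A$ from $\A_p(\Om)$ into $\mathcal M_0^p(\Omega)$. As explained in the paragraph immediately preceding the statement, $A_m\gto A$ in $\A_p(\Om)$ is the same as $\gamma$-convergence of $\infty_{A_m}$ to $\infty_A$ in $\mathcal M_0^p(\Omega)$; the Dal Maso--Murat result then says precisely that this convergence of measures is already detected by the resolvents at the single datum $f\equiv 1$, which is exactly the claim.

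To indicate the structure behind the cited result, I would fix $f\in W^{-1,\,p^\prime}\!(\Om)$ and set $u_m:=\Rsp{A_m}(f)$, $w_m:=\Rsp{A_m}(1)$, $w:=\Rsp{A}(1)$. Testing the defining equation of $u_m$ with $u_m$ itself produces a uniform $W^{1,\,p}_0(\Om)$-bound, so up to a subsequence $u_m\wto u^*$ weakly in $W^{1,\,p}_0(\Om)$. Since $\Rsp{A}(f)$ is uniquely determined, it suffices to verify $u^*=\Rsp{A}(f)$, which splits into two tasks: (a) $u^*\in W^{1,\,p}_0(A)$, and (b) $u^*$ solves $-\lap_p u^*=f$ weakly on $A$.

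The main obstacle is (a), which is exactly where the hypothesis $w_m\wto w$ is used essentially. I would establish a pointwise bound $|u_m|\leq C_f\, w_m$, first for $f\in L^\infty(\Om)$ via the maximum principle applied to $\|f\|_\infty^{1/(p-1)}w_m- u_m$ (a nonnegative fine supersolution vanishing q.e. outside $A_m$), and then extend to general $f$ by density. Passing to the weak $W^{1,\,p}_0$-limit yields $|u^*|\leq C_f\,w$, and because $w=0$ q.e. on $\Om\setminus A$ the same is true of $u^*$, which places it in $W^{1,\,p}_0(A)$. Property (b) then follows by Minty's monotonicity trick, with test functions $\phi\in W^{1,\,p}_0(A)$ approximated by admissible ones in $W^{1,\,p}_0(A_m)$ through another obstacle-type construction based on the torsion functions $w_m$. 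These steps are carried out in detail in \cite{DM-Mu97}, and uniqueness of the limit $u^*=\Rsp{A}(f)$ finally upgrades subsequential convergence to convergence of the whole sequence.
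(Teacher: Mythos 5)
Your proposal is correct and follows exactly the paper's route: the paper proves Theorem~\ref{DM6.3} precisely by identifying $\gamma_p$-convergence of $A_m$ with Dal Maso--Murat $\gamma$-convergence of the measures $\infty_{A_m}$ and then invoking \cite[Th.~6.3]{DM-Mu97}, with the ``only if'' direction being the trivial specialisation $f\equiv1$. Your additional sketch of the internals of the cited result (the comparison $|u_m|\leq C_f\,w_{A_m}$, passage to the limit, and Minty's trick) is sound and consistent with the techniques the paper itself uses in Lemma~\ref{support} and Lemma~\ref{lem:p subtorsion}, though it is not needed once the citation is in place.
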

The following theorem is also contained in the above mentioned paper, see 
 \cite[Th.~6.8]{DM-Mu97}. 
\begin{Thm}\label{DM6.8}
Let $A_m,A\in \A_p(\Om)$ be such that  $A_m \gto A$. Then for every $f\in W^{-1,\,p^\prime}\!(\Om)$ we have that $ \Rsp{A_m}(f) \to \Rsp{A}(f) $ strongly in $ W^{1,r}_0(\Om)$ for all $1\leq r<p$.
\end{Thm}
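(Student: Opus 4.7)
\smallskip
\noindent\textbf{Proof proposal for Theorem~\ref{DM6.8}.} Set $u_m:=\Rsp{A_m}(f)$ and $u:=\Rsp{A}(f)$. The plan is to prove strong convergence directly in $W^{1,p}_0(\Om)$, which is a stronger conclusion than the stated one and immediately yields the claim for every $1\le r<p$, since $\Om$ is bounded. By Definition~\ref{def:gamma}, $u_m\wto u$ weakly in $W^{1,p}_0(\Om)$; in particular $\{u_m\}$ is bounded in $W^{1,p}_0(\Om)$ and, by Rellich--Kondrachov, $u_m\to u$ strongly in $L^p(\Om)$.

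The key step is an energy identity obtained by testing each equation against its own solution. Since by definition of the resolvent $u_m\in W^{1,p}_0(A_m)$ is itself an admissible test function for the weak formulation of $-\Delta_p u_m=f$ on $A_m$, and $u_m$ vanishes q.e. outside $A_m$, one gets
\[
\int_\Om|\gr u_m|^p\dx=\inp{f}{u_m}_{W^{-1,p'}\!(\Om),\,W^{1,p}_0(\Om)},
\]
and exactly the same identity for $u$ on $A$. Weak convergence $u_m\wto u$ in $W^{1,p}_0(\Om)$ together with $f\in W^{-1,p'}\!(\Om)$ gives $\inp{f}{u_m}\to\inp{f}{u}$, so
\[
\|\gr u_m\|_{L^p(\Om)}\longrightarrow\|\gr u\|_{L^p(\Om)}.
\]

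Since $1<p<\infty$, $L^p(\Om;\R^n)$ is uniformly convex, and so weak convergence $\gr u_m\wto\gr u$ in $L^p$ combined with convergence of the norms forces strong convergence $\gr u_m\to\gr u$ in $L^p(\Om;\R^n)$ (Radon--Riesz property). Combining this with Poincar\'e's inequality on the bounded set $\Om$ yields $u_m\to u$ strongly in $W^{1,p}_0(\Om)$, which is the desired conclusion (and actually more). The only delicate point to verify in executing this plan is the legitimacy of using the solution as a test function in its own equation; this is automatic here from the very definition of the resolvent map, so no serious technical obstacle arises. It is worth noting that the argument relies crucially on the fact that we are dealing with the measure $\infty_A$ (for which the extra term $\int|u|^p\,d\mu$ in the general Dal Maso--Murat setting vanishes identically on admissible test functions), which is why one obtains strong convergence in $W^{1,p}_0(\Om)$ rather than merely in $W^{1,r}_0(\Om)$ for $r<p$.
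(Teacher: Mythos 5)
Your argument is correct, and it is genuinely different from what the paper does: the paper offers no proof at all for this statement, simply quoting \cite[Th.~6.8]{DM-Mu97}, whereas you give a short self-contained argument. The comparison is instructive. The Dal Maso--Murat theorem is stated for arbitrary $\gamma$-converging sequences of measures $\mu_m\in\mathcal M_0^p(\Omega)$, where the energy balance $\int_\Omega|\gr u_m|^p\dx+\int_\Omega|u_m|^p\,d\mu_m=\langle f,u_m\rangle$ can lose mass to the zero-order term in the limit (the classical ``strange term'' for perforated domains); this is precisely why only $W^{1,r}_0(\Omega)$-convergence for $r<p$ survives in general. You correctly observe that under the hypothesis $A_m\gto A$ both the approximating measures $\infty_{A_m}$ and, crucially, the \emph{limit} measure $\infty_A$ annihilate the zero-order term on the respective solutions, so that $\|\gr u_m\|_{L^p}^p=\langle f,u_m\rangle\to\langle f,u\rangle=\|\gr u\|_{L^p}^p$, and the Radon--Riesz property of the uniformly convex space $L^p(\Omega;\R^n)$ upgrades the weak convergence of the gradients to strong convergence. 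Every step checks out: taking $u_m$ as its own test function is licit by the definition of the resolvent, $\gr u_m=0$ a.e.\ off $A_m$ so the integral over $A_m$ equals that over $\Omega$, and the duality pairing passes to the limit by the assumed weak convergence. Your conclusion is strictly stronger than the stated one (strong convergence in $W^{1,p}_0(\Omega)$ rather than only in $W^{1,r}_0(\Omega)$, $r<p$), which also shows that the paper could have avoided the citation here; note this does not conflict with the authors' remark before Definition~\ref{def:gamma}, since the relaxation phenomenon they allude to concerns sequences whose $\gamma$-limit measure fails to be of the form $\infty_A$, a situation excluded by the hypothesis of this theorem. The only (cosmetic) point worth tightening in a final write-up is to say explicitly that the pairing $\langle f,u_m\rangle$ computed in the weak formulation on $A_m$ coincides with the pairing in $W^{-1,p'}(\Omega)\times W^{1,p}_0(\Omega)$ because $u_m\in W^{1,p}_0(A_m)\subset W^{1,p}_0(\Omega)$ and $f$ acts by restriction.
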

Now we show that, if the underlying quasi open sets $\gamma_p$-converge, then 
the limit of the sequence of eigenvalues is still an eigenvalue and the corresponding eigenfunctions converge strongly in $W^{1,r}(\Omega)$ for all $1\leq r<p$.  
\begin{Prop}\label{prop:gamma stability}
Let $A_m\in\A_p(\Om)$ be a sequence of $p$-quasi open sets and let $\lambda_m $ be, for every $m \in \N$,  an eigenvalue of the $p$-Laplacian in $A_m$ with a normalized 
eigenfunction $u_m\in W^{1,\,p}_0(A_m)$. 
If there exist $A\in\A_p(\Om)$ and $\lambda\in\R$ such that $A_m \gto A$ 
and $\lambda_m \to \lambda$ as $m \to \infty$, then $\lambda$ is an eigenvalue 
of the $p$-Laplacian in $A$ (hence $\lambda>0$) and the eigenfunctions $u_m$ converge in $W^{1,r}(\Omega)$, up to a subsequence, to an eigenfunction $u$ of $\lambda$, for all $1\leq r<p$. 
\end{Prop}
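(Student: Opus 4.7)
The plan is to identify the weak $W^{1,p}$-limit of a subsequence of $u_m$ as the resolvent of $\lambda|u|^{p-2}u$ in $A$, and then to upgrade this to strong $W^{1,r}_0$-convergence through the monotonicity of the $p$-Laplacian.

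First, from the Sobolev-type lower bound \eqref{andr} one has $\lambda_m\geq c(n,p)|\Omega|^{-p/n}$, so the limit $\lambda$ is strictly positive. Since $\|u_m\|_{L^p(\Omega)}=1$ and $\int_\Omega|\gr u_m|^p\,dx=\lambda_m$ is bounded, the zero-extended sequence $u_m$ is bounded in $W^{1,\,p}_0(\Omega)$. Rellich--Kondrachov provides a not relabelled subsequence with $u_m\wto u$ weakly in $W^{1,\,p}_0(\Omega)$ and strongly in $L^p(\Omega)$; in particular $\|u\|_{L^p(\Omega)}=1$, so $u\not\equiv 0$.

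Next, write $u_m=\Rsp{A_m}(f_m)$ with $f_m:=\lambda_m|u_m|^{p-2}u_m$. From $\lambda_m\to\lambda$, the strong $L^p$-convergence $u_m\to u$, and the boundedness and a.e.~convergence of $|u_m|^{p-2}u_m$, a uniform integrability argument gives $f_m\to f:=\lambda|u|^{p-2}u$ strongly in $L^{p^\prime}(\Omega)\hookrightarrow W^{-1,\,p^\prime}(\Omega)$. Set $v_m:=\Rsp{A_m}(f)\in W^{1,\,p}_0(A_m)$. Since $A_m\gto A$, Theorem~\ref{DM6.8} yields $v_m\to v:=\Rsp{A}(f)\in W^{1,\,p}_0(A)$ strongly in $W^{1,r}_0(\Omega)$ for every $1\leq r<p$.

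Then I would compare $u_m$ with $v_m$. Both lie in $W^{1,\,p}_0(A_m)$, so $u_m-v_m$ is an admissible test function, and subtracting the defining weak equations yields
\begin{equation*}
\int_\Omega\big(|\gr u_m|^{p-2}\gr u_m-|\gr v_m|^{p-2}\gr v_m\big)\cdot\gr(u_m-v_m)\,dx=\int_\Omega(f_m-f)(u_m-v_m)\,dx.
\end{equation*}
The right-hand side tends to zero by H\"older's inequality, because $f_m\to f$ in $L^{p^\prime}(\Omega)$ while $u_m-v_m$ is bounded in $L^p(\Omega)$. The standard monotonicity of the $p$-Laplacian (Lemma~\ref{lem:ineq}) then forces $\|\gr(u_m-v_m)\|_{L^p(\Omega)}\to 0$ when $p\geq 2$, and, after a H\"older interpolation against the uniform $W^{1,\,p}_0$-bound, $\|\gr(u_m-v_m)\|_{L^r(\Omega)}\to 0$ for every $1\leq r<p$ when $1<p<2$. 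Combined with $v_m\to v$ this gives $u_m\to v$ in $W^{1,r}_0(\Omega)$, so $u=v$ by uniqueness of the weak $W^{1,\,p}$-limit. Therefore $u\in W^{1,\,p}_0(A)$ weakly solves $-\lap_pu=\lambda|u|^{p-2}u$, and being non-trivial is an eigenfunction for $\lambda$.

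The main obstacle is the final strong-convergence step in the subquadratic regime $1<p<2$: monotonicity alone provides only a weighted $L^2$-type control of $\gr(u_m-v_m)$, and extracting $W^{1,r}_0$-strong convergence for $r<p$ requires carefully exploiting the uniform $W^{1,\,p}_0$-bound on $u_m$ and $v_m$ via H\"older's inequality.
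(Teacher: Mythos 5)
Your proposal is correct and follows essentially the same route as the paper: identify the weak limit $u$ as $\Rsp{A}(\lambda|u|^{p-2}u)$ by comparing $u_m$ with $v_m=\Rsp{A_m}(\lambda|u|^{p-2}u)$, use Theorem~\ref{DM6.8} for the convergence of $v_m$, and close the gap $\|u_m-v_m\|$ via monotonicity of the $p$-Laplacian after showing the right-hand side of the subtracted equations vanishes. The only minor remark is that the ``obstacle'' you flag for $1<p<2$ is already handled by inequality \eqref{eq:str mon2} of Lemma~\ref{lem:ineq}, which (being exactly the H\"older interpolation you describe) yields the full $\|\gr(u_m-v_m)\|_{L^p(\Omega)}\to0$, not merely convergence in $L^r$ for $r<p$.
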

\begin{proof}
Since $u_m$ is a normalized eigenfunction of  $\lambda_m$ and $\lambda_m\to \lambda$, 
the sequence  $ \{u_m\}$ is  bounded 
in $ W^{1,\,p}_0(\Om)$. Therefore there exists a function 
$u\in W^{1,\,p}_0(\Om)$ such that, up to a not relabelled subsequence,  $ u_m \wto u$ in $W^{1,\,p}(\Omega)$, 
$ u_m \to u$ in $L^p(\Om)$ and a.e. in $\Omega$. Hence $\|u\|_{L^p(\Om)}=1$. 
Let  us now set
 $$ v_m := \Rsp{A_m}(\lambda |u|^{p-2}u).$$
We claim that
\begin{equation}\label{gamsta1}
\lim_{m\to \infty}\|u_m-v_m\|_{W^{1,\,p}_0(\Om)}=0. 
\end{equation}
Since $A_m \gto A$, by Theorem~\ref{DM6.8} we have that
$ v_m \to \Rsp{A}(\lambda |u|^{p-2}u) $ in $W^{1,r}(\Omega)$ for all $1\leq r<p$. Then  the claim \eqref{gamsta1} yields that
$ u_m \to \Rsp{A}(\lambda |u|^{p-2}u)$ in $W^{1,r}(\Omega)$ for all $1\leq r<p$. But since   $u_m\wto u$ weakly in $W^{1,\,p}(\Omega)$, 
we conclude that $ u = \Rsp{A}(\lambda |u|^{p-2}u)$. Thus $u\in W^{1,\,p}_0(A)$, $\lambda$ is an eigenvalue of $A$ and $u$ is a corresponding eigenfunction. This concludes the proof, provided we show that the claim \eqref{gamsta1} holds.

To this end, note that $u_m$ and $v_m$ satisfy the 
following equations in $A_m$. 
\begin{align*}
 -\dv(|\gr u_m|^{p-2}\gr u_m)\ &=\ \lambda_m |u_m|^{p-2}u_m; \\ 
 -\dv(|\gr v_m|^{p-2}\gr v_m)\ &=\ \lambda\, |u|^{p-2}u.
\end{align*}
Testing both equations by the  function $ u_m-v_m$ and subtracting the resulting equalities, we obtain 
\begin{align*}
&\int_{A_m}\Big( |\gr u_m|^{p-2}\gr u_m - |\gr v_m|^{p-2}\gr v_m\Big)\cdot\big( \gr u_m-\gr v_m\big)\dx\\
 &\qquad\qquad\qquad =\ \int_{A_m}\Big [\lambda_m|u_m|^{p-2} u_m - \lambda| u|^{p-2}u\Big](  u_m- v_m)\,dx.
\end{align*}
By the a.e. convergence of $u_m$ to $u$, using a well known variant of the Lebesgue dominated convergence theorem, we get that  the sequence $\lambda_m|u_m|^{p-2}u_m$ converges in $L^{p^\prime}\!(\Omega)$ to $\lambda|u|^{p-2}u$. Since the sequence $u_m-v_m$ is bounded in $L^p(\Omega)$, we get that the right hand side of the above equality converges to zero. Then, from 
Lemma \ref{lem:ineq} we get immediately that  
$$ \lim_{m\to\infty}\int_\Om |\gr u_m-\gr v_m|^p\dx=0. $$
This proves the claim \eqref{gamsta1}, thereby completing the proof of the lemma.
\end{proof}

\subsection{$\gamma_p$-lower semicontinuity of eigenvalues}\noindent
Now we investigate the behavior of the $p$-Laplacian eigenvalues with respect to the $\gamma_p$-convergence of quasi open sets. The case of  
the first eigenvalue is easy to deal with.
\begin{Cor}[(Lower semicontinuity of $\lambda_1$)]\label{prop:gam cont}
Let $A_m,A\in \mathcal A_p(\Omega)$ be such that $A_m\gto A $.  Then 
\begin{equation}\label{prop:gam cont1}
\lambda_1(A)\,\leq\, \liminf_{m\to\infty}\lambda_1(A_m). 
\end{equation}
\end{Cor}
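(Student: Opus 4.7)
The plan is to reduce the claim to Proposition~\ref{prop:gamma stability}. Pass to a subsequence, not relabelled, so that
$$
\lambda_1(A_m)\longrightarrow L:=\liminf_{m\to\infty}\lambda_1(A_m).
$$
If $L=+\infty$ the inequality \eqref{prop:gam cont1} is trivial, so assume $L<+\infty$. Since every $A_m$ is contained in the bounded open set $\Omega$, estimate \eqref{andr} yields $\lambda_1(A_m)\geq c(n,p)|\Omega|^{-p/n}>0$, so $L>0$ as well.

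For each $m$ choose a normalized first eigenfunction $u_m\in W^{1,\,p}_0(A_m)$. Because $\int_\Omega |\gr u_m|^p\,dx=\lambda_1(A_m)$ is bounded and $\|u_m\|_{L^p(\Omega)}=1$, the sequence $u_m$ is bounded in $W^{1,\,p}_0(\Omega)$, so the hypotheses of Proposition~\ref{prop:gamma stability} are met: $A_m\gto A$, $\lambda_1(A_m)\to L$, and each $u_m$ is a normalized eigenfunction of $\lambda_1(A_m)$ on $A_m$. The proposition then gives that $L$ is an eigenvalue of the $p$-Laplacian on $A$.

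By Definition~\ref{def:1 eigen}, $\lambda_1(A)$ is the smallest eigenvalue of the $p$-Laplacian on $A$, hence $L\geq \lambda_1(A)$, which is exactly \eqref{prop:gam cont1}. There is essentially no obstacle here: the only subtlety is making sure the limit eigenvalue is genuinely positive so that Proposition~\ref{prop:gamma stability} applies, and this is secured by the uniform lower bound from \eqref{andr} together with $A_m\subset\Omega$ having uniformly bounded measure.
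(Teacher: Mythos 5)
Your proposal is correct and follows the same route as the paper: pass to a subsequence realizing the liminf, invoke Proposition~\ref{prop:gamma stability} to conclude the limit is an eigenvalue of $A$, and use that $\lambda_1(A)$ is the minimum of all eigenvalues. The extra remarks on positivity of the limit are harmless but not needed, since Proposition~\ref{prop:gamma stability} already delivers that the limit is a (positive) eigenvalue.
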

\begin{proof}
Without loss of generality we may assume that the above $\liminf$  is indeed a limit, say $\lambda$, and that $\lambda$ is finite.
From Proposition \ref{prop:gamma stability}, we know that $\lambda$ is an eigenvalue of $A$. As $\lambda_1(A)$ is the 
minimum of all eigenvalues, $\lambda_1(A)\leq\lambda$ and \eqref{prop:gam cont1} follows.
\end{proof}
The proof of lower semicontinuity for the second eigenvalue, is more involved. To this end we   have to use both the result stated in  Theorem~\ref{prop:mount pass} and a construction  based  on Lemma~\ref{lem:path}. 

\begin{Prop}[(Lower semicontinuity of $\lambda_2$)]\label{prop:lsc}
Let $A_m,A\in \mathcal A_p(\Omega)$ be such that $A_m\gto A $. Then 
\begin{equation}\label{prop:lsc1}
 \lambda_2(A) \,\leq\, \liminf_{m\to \infty}\lambda_2(A_m). 
 \end{equation}
\end{Prop}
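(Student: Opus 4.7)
The plan is to reduce the desired lower semicontinuity to a contradiction argument based on the minimax formula of Theorem~\ref{prop:mount pass}. I would first assume that the $\liminf$ is actually a finite limit $L$ (otherwise the inequality is trivial) and, by passing to subsequences, that $\lambda_1(A_m)\to\mu\in[\lambda_1(A),L]$, where the lower bound is Corollary~\ref{prop:gam cont}. Let $u_{1,m}\geq 0$ be a normalized first eigenfunction of $A_m$ and $u_{2,m}$ a normalized second eigenfunction. Applying Proposition~\ref{prop:gamma stability} would then produce, up to a further subsequence, limits $u_{1,A},u_{2,A}\in \mathcal M_p(A)$ that are eigenfunctions of $A$ with eigenvalues $\mu$ and $L$ respectively; in particular $L$ is itself an eigenvalue of $A$.

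Next I would dispose of two easy cases. If $\lambda_1(A)$ is not simple, then $\lambda_2(A)=\lambda_1(A)\leq L$ by Definition~\ref{def:sec eigen} and the fact that $L\geq\lambda_1(A)$. If $\lambda_1(A)$ is simple and $L>\lambda_1(A)$, then since $L$ is an eigenvalue and $\lambda_1(A)$ is isolated by Proposition~\ref{prop: first eigen prop}, Definition~\ref{def:sec eigen} forces $L\geq\lambda_2(A)$. The only residual case is $\lambda_1(A)$ simple with $L=\lambda_1(A)$; necessarily $\mu=\lambda_1(A)$ as well, so $u_{1,A}$ is the unique normalized nonnegative first eigenfunction of $A$, and my goal becomes to derive a contradiction.

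For this I would invoke Theorem~\ref{prop:mount pass} to choose, for each $m$, an optimal path $\gamma_m\in\Gamma(u_{1,m},-u_{1,m})$ with $\max_{t\in[0,1]} E(\gamma_m(t))=\lambda_2(A_m)$. Applying the intermediate value theorem to the continuous function $t\mapsto\|\gamma_m(t)-u_{1,m}\|_{L^p}-\|\gamma_m(t)+u_{1,m}\|_{L^p}$ would yield some $t_m\in(0,1)$ at which the two norms coincide; writing $2u_{1,m}=(u_{1,m}-\gamma_m(t_m))+(u_{1,m}+\gamma_m(t_m))$ and applying the triangle inequality then forces both norms to be $\geq 1$. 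Setting $w_m:=\gamma_m(t_m)$, from $E(w_m)\leq\lambda_2(A_m)\to\lambda_1(A)$ I get boundedness in $W^{1,p}_0(\Omega)$, hence up to subsequence $w_m\to w$ strongly in $L^p$ and weakly in $W^{1,p}$, with $\|w\|_{L^p}=1$ and $E(w)\leq\lambda_1(A)$ by lower semicontinuity. Passing to the limit in $\|w_m\pm u_{1,m}\|_{L^p}\geq 1$ gives $\|w\pm u_{1,A}\|_{L^p}\geq 1$, so $w\ne\pm u_{1,A}$. Provided that $w\in W^{1,p}_0(A)$, one then has $w\in\mathcal M_p(A)$ with $E(w)\leq\lambda_1(A)$, so by \eqref{fili1} $w$ would be a normalized first eigenfunction of $A$, contradicting the simplicity of $\lambda_1(A)$.

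The main obstacle is this last claim: that the weak $W^{1,p}$ limit $w$ of a sequence $w_m\in W^{1,p}_0(A_m)$ belongs to $W^{1,p}_0(A)$ when $A_m\gto A$. I would handle it by appealing to the $\Gamma$-liminf inequality for the capacitary measures $\infty_{A_m}\to\infty_A$ in the sense of Dal Maso--Murat \cite{DM-Mu97}, mentioned in the paper right after Definition~\ref{def:gamma}: since $\int_\Omega|w_m|^p\,d\infty_{A_m}=0$ for every $m$, that inequality yields $\int_\Omega|w|^p\,d\infty_A=0$, which is precisely $w=0$ q.e.\ on $\Omega\setminus A$, i.e.\ $w\in W^{1,p}_0(A)$.
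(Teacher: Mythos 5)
Your overall strategy is sound, and in one respect it is genuinely leaner than the paper's argument: instead of passing to the limit along the whole optimal paths $\gamma_m$ (which in the paper requires uniform bounds $\int_0^1\|\gamma_m'(t)\|_{L^p}^p\,dt\leq C$, hence the ad hoc reconstruction of the curves via Lemma~\ref{lem:path}, plus Arzel\`a--Ascoli and an application of Theorem~\ref{prop:mount pass} to the limit path), you extract from each $\gamma_m$ a single ``midpoint'' $w_m=\gamma_m(t_m)$ equidistant in $L^p$ from $\pm u_{1,m}$, and derive the contradiction from $E(w)\leq\lambda_1(A)$, $\|w\|_{L^p}=1$, $w\neq\pm u_1$ and the simplicity of $\lambda_1(A)$ via \eqref{fili1}. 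The intermediate value argument and the triangle inequality giving $\|w_m\pm u_{1,m}\|_{L^p}\geq1$ are correct, and the strong $L^p$ convergence of $u_{1,m}$ to $u_1$ needed to pass to the limit is indeed supplied by (the proof of) Proposition~\ref{prop:gamma stability}. The preliminary reductions (the non-simple case, and the case $L>\lambda_1(A)$ via Proposition~\ref{prop:gamma stability} and Definition~\ref{def:sec eigen}) also match the paper.

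The genuine gap is exactly the step you flag as ``the main obstacle'': the claim that the weak $W^{1,p}_0(\Omega)$ limit $w$ of $w_m\in W^{1,p}_0(A_m)$ belongs to $W^{1,p}_0(A)$. You justify it by appealing to ``the $\Gamma$-liminf inequality for the capacitary measures $\infty_{A_m}\to\infty_A$'', but no such inequality is stated in the paper. What is recalled after Definition~\ref{def:gamma} is the Dal Maso--Murat notion of $\gamma$-convergence of measures, which is defined through the weak convergence of \emph{solutions} of the equations $-\Delta_p u_m+|u_m|^{p-2}u_m\mu_m=f$, not through $\Gamma$-convergence of the energy functionals $u\mapsto\int_\Omega|\nabla u|^p\,dx+\int_\Omega|u|^p\,d\mu_m$. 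Deducing a $\Gamma$-liminf inequality for these functionals from the convergence of resolvents is a substantive matter in the nonlinear setting (it is precisely because the natural topology here is so weak that the paper's Section~5 exists), and the closure property you need is the content of Lemma~\ref{support}: if $w_{A_m}=\mathscr R_{p,A_m}(1)\rightharpoonup w_A$ (which holds by Theorem~\ref{DM6.3}) and $u_m\in W^{1,p}_0(A_m)$ converge weakly to $u$, then $u=0$ q.e.\ on $\{w_A=0\}\supset\Omega\setminus A$. The paper proves this with a full page of work (Dal Maso--Modica $\Gamma$-compactness, strict convexity, subdifferentials, the comparison principle of Lemma~\ref{lem:p subtorsion}); it cannot be dismissed as an available liminf inequality. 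If you replace your citation by an appeal to Lemma~\ref{support} — which is exactly what the paper does at the corresponding point of its own proof — your argument closes and yields a shorter proof of the proposition than the one in the text.
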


\begin{proof}
If $\lambda_1(A)$ is not simple then \eqref{prop:lsc1} follows at once from Definition \ref{def:sec eigen} and 
Corollary \ref{prop:gam cont}.
Hence in the rest of the proof we assume that $\lambda_1(A)$ is simple. 

Let  $u_1$be  the first nonnegative normalized eigenfunction of $\lambda_1(A)$. Without loss of generality we may assume that the $\liminf$ in  \eqref{prop:lsc1}  is a limit and that it is finite. Then
\begin{equation}\label{prop:lsc2}
\lambda_2(A_m) \to \lambda\quad\text{as}\ m\to\infty 
 \end{equation}
and by Proposition~\ref{prop:gamma stability}, $\lambda$ is an eigenvalue.
For every $m\in\N$, let $u_{1,m}$ be a normalized nonnegative eigenfunction of $\lambda_1(A_m)$ supported in a $p$-quasi connected component $U_m$ of $A_m$. Note that such eigenfunction always exists thanks to Lemma~\ref{lem:quasi res}. Moreover, by Proposition~\ref{prop:gamma stability}, we may assume that the sequence $u_{1,m}$ converges weakly in $W^{1,\,p}(\Omega)$ to an eigenfunction $w$. 

For every $m$ let us denote by $u_{2,m}$ a normalized eigenfunction for $\lambda_{2,m}(A_m)$. We now construct a suitable sequence of curves $\gamma_m\in W^{1,\,p}\big([0,1], \M_p(A_m)\big)$ with endpoints $\pm u_{1,m}$, by considering the following cases.

$ \textbf{Case 1 :}$  $u_{2,m}$ changes sign in $U_m$.
\\ 
 We first construct a continuous curve $w_m$ on $\M_p$ from $u_{2,m}^+/\|u_{2,m}^+ \|_{L^p(\Om)}$ to $-u_{2,m}^-/\|u_{2,m}^-\|_{L^p(\Om)}$, in a  different way from what we did in \eqref{eq:curves}.  We set 
 $$
 w_{m}(t):=\frac{\frac{(1-t)u^+_{2,m}}{\|u^+_{2,m}\|_{L^p(\Om)}}-\frac{tu^-_{2,m}}{\|u^-_{2,m}\|_{L^p(\Om)}}}{((1-t)^p+t^p)^{1/p}}.
 $$
Note that  the above construction yields that  there exists a constant $C$ independent of $m$ such that for all $m\in\N$
\begin{equation}\label{prop:lsc3}
\int_0^1\|w_{m}'(t)\|_{L^p(\Omega)}^p\,dt\,\leq  C 
\end{equation}
and furthermore that $E(w_{m}(t))=\lambda_2(A_m)$ for all $t\in[0,1]$. Now, since $u^+_{2,m}$ and $u^-_{2,m}$ are not eigenfunctions, using Lemma \ref{lem:path} 
and Remark \ref{rem:repar}, we can find two curves $v_{i,m}\in W^{1,\,p}\big([0,1], \M_p(A_m)\big)$ with $i=1,2$, which have the property that $E(v_{i,m}(t))\leq\lambda_2(A_m)$ for all $t\in[0,1]$ and such that $v_{1,m}$ connects $u^+_{2,m}/\|u^+_{2,m}\|_{L^p(A_m)}$ to $u_{1,m}$ and $v_{2,m}$ connects  $-u^-_{2,m}/\|u^-_{2,m}\|_{L^p(A_m)}$ to $-u_{1,m}$. Then, denoting by $\inv{v_{1,m}}$ the path $v_{1,m}$ covered in the opposite direction, we set 
$$
\gamma_m:=\,\inv{v_{1,m}}*\,w_m\,*v_{2,m}.
$$
Note that from \eqref{prop:lsc3} and the second inequality in \eqref{1}, we have that
$$
\int_0^1\|\gamma_{m}'(t)\|_{L^p(\Omega)}^p\,dt\,\leq C.
$$

$ \textbf{Case 2 :}$ $u_{2,m}$ has  constant sign in $U_m$.

\noindent
In this case, arguing as in the proof of Theorem~\ref{prop:mount pass}, we may always find another second eigenfunction, still denoted by $u_{2,m}$, whose support is disjoint from $U_m$ up to a set of zero $p$-capacity.
Thus 
we define the curves $\gamma_m\in \Gamma(u_{1,m},-u_{1,m})$ by setting
$$
 \gamma_m(t)=\frac{\cos(\pi t)u_{1,m}\,+\,t(1-t)u_{2,m}}{\big(|\cos(\pi t)|^p\, +\, t^p(1-t)^p\big)^{1/p}} \qquad t\in[0,1].
$$
Then it is easily checked that $E(\gamma_m(t))\leq\lambda_2(A_m)$ for all $m$ and $t$ and that also in this case there exists a constant $C$ such that for all $m$
$$
\int_0^1\|\gamma_{m}'(t)\|_{L^p(\Omega)}^p\,dt\,\leq C.
$$

Combining the two cases, we  conclude that there exists a sequence  $\gamma_m\in W^{1,\,p}\big([0,1], \M_p(A_m)\big)$ of curves with endpoints 
 $\pm u_{1,m}$, such that for all $m\in\N$, we have 
$$
\int_0^1\|\gamma_m'(t)\|_{L^p(\Omega)}^p\,dt\,\leq C\quad\text{and}\quad E(\gamma_m(t))\leq\lambda_2(A_m),\quad\text{ for all $t>0$}.
$$
Now we prove that $\lambda\geq\lambda_2(A)$, where $\lambda$ is the limit in \eqref{prop:lsc2}.

To this end we argue by contradiction, assuming that $\lambda<\lambda_2(A)$. Then we have that $\lambda=\lambda_1(A)$ and that $u_{1,m}$ converges to $u_1$.
Therefore, using Arzel\`a-Ascoli theorem as in Step 1 of the proof of Theorem~\ref{prop:mount pass}, we conclude that there exists  $\gamma \in W^{1,\,p}\big([0,1], L^p(\Omega)\big)$ such that,  up to a  subsequence,
$\gamma_m(t) \to \gamma(t) $ in $ L^p(\Om)$ and weakly in $W^{1,p}_0(\Omega)$ for all $t\in[0,1]$.

Note also that $\gamma(t)\in W^{1,\,p}_0(A)$ for all $t\in[0,1]$, hence $\gamma \in W^{1,\,p}\big([0,1], \M_p(A)\big)$. Indeed,  the functions $w_{A_m}= \Rsp{A_m}(1)\wto\Rsp{A}(1)$ weakly in $W^{1,p}_0(\Omega)$. Thus, Lemma~\ref{support} below yields that $\gamma(t)=0$ q.e. in $\Omega\setminus A$. 

 Since the endpoints of $\gamma$ are $\pm u_1$, from Theorem~\ref{prop:mount pass} we conclude that 
\begin{align*}
\lambda_2(A) \leq \max_{t\in[0,1]}\int_A |\gr\gamma(t)|^p\dx
\,\leq\, \liminf_{m\to \infty}\bigg[\max_{t\in[0,1]}\int_A |\gr\gamma_m(t)|^p\dx\bigg]
\leq\lim_{m\to \infty}\lambda_2(A_m)=\lambda_1(A), 
\end{align*}
which is impossible since $\lambda_1(A)$ is simple. This contradiction concludes the proof.
\end{proof}

\section{A shape optimization problem}\label{sec:existence}
In this section we prove the following theorem, which is the $p$-Laplacian 
counterpart of the existence theorem of Buttazzo-Dal Maso \cite{Bu-DM}. 
With this theorem in hand, 
Theorem~\ref{thm:mainthm2} follows at once, thanks to Corollaries~\ref{fili} and \ref{prop:gam cont} and to Proposition~\ref{prop:lsc}.

\begin{Thm}\label{thm:existence}
Let  $\Om \subset \R^n$ be a bounded open set and 
 $F : \A_p(\Om) \to \R$ be a decreasing function, lower semicontinuous with respect to 
$\gamma_p$-convergence. Then the minimization problem 
\begin{equation}\label{eq:min prob}
\min\set{F(A)}{A\in \A_p(\Om),\ |A|=c}, 
\end{equation}
where $\ 0< c \leq |\Om| $, always has a solution.
\end{Thm}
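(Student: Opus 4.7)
My plan is to follow the scheme of Buttazzo--Dal Maso \cite{Bu-DM}, replacing the linear potential theory by the $p$-Laplacian tools developed in Section~\ref{sec:gamma cont} together with Dal Maso--Murat's compactness of $\gamma_p$-convergence in the space of measures $\mathcal{M}_0^p(\Omega)$.

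First I would fix a minimizing sequence $\{A_m\} \subset \A_p(\Omega)$ with $|A_m|=c$ and $F(A_m)\to\alpha:=\inf F$. Testing the equation $-\lap_p w_m = 1$ with $w_m:=\Rsp{A_m}(1)$ and using Poincar\'e's inequality on the bounded set $\Omega$, one sees that $\{w_m\}$ is bounded in $W^{1,\,p}_0(\Omega)$. By the Dal Maso--Murat compactness, a subsequence of the measures $\infty_{A_m}$ $\gamma_p$-converges to some $\mu\in\mathcal M_0^p(\Omega)$; in particular (Theorem~\ref{DM6.3}) $w_m\wto w:=w_\mu$ weakly in $W^{1,\,p}_0(\Omega)$, and by Theorem~\ref{DM6.8} the convergence is strong in $W^{1,r}_0(\Omega)$ for every $r<p$, hence pointwise a.e. Setting $A^*:=\{w>0\}\in\A_p(\Omega)$, the fact that $w_m=0$ q.e. on $\Omega\setminus A_m$ combined with the a.e. convergence yields $A^*\subset\liminf_{m\to\infty}A_m$ up to a null set, and Fatou gives $|A^*|\leq c$.

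The central step is to prove $F(A^*)\leq\alpha$. Since $w_\mu$ vanishes q.e. outside $A^*$, one has $w_\mu\in W^{1,\,p}_0(A^*)$, and a standard comparison for monotone operators gives $w_\mu\leq w_{A^*}$, which in turn translates into the measure inequality $\infty_{A^*}\leq\mu$. I would then extend $F$ to $\mathcal M_0^p(\Omega)$ by $\widetilde F(\nu):=F(\{w_\nu>0\})$. The map $\nu\mapsto\{w_\nu>0\}$ is decreasing with respect to the ordering of measures, so the hypothesis that $F$ is decreasing on $\A_p(\Omega)$ translates into monotonicity of $\widetilde F$ on $\mathcal M_0^p(\Omega)$; in particular $F(A^*)=\widetilde F(\infty_{A^*})\leq\widetilde F(\mu)$. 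Combining this with the $\gamma_p$-lower semicontinuity of $\widetilde F$ (deduced from the $\gamma_p$-lsc of $F$ on $\A_p(\Omega)$ via the stability of $\nu\mapsto\{w_\nu>0\}$ under $\gamma_p$-convergence, which uses Theorem~\ref{DM6.8}), I would obtain $\widetilde F(\mu)\leq\liminf_m\widetilde F(\infty_{A_m})=\liminf_m F(A_m)=\alpha$, hence $F(A^*)\leq\alpha$.

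Finally, to pass from $|A^*|\leq c$ to a set of measure exactly $c$, the continuous function $r\mapsto|A^*\cup(\Omega\cap B_r(0))|$ takes every value between $|A^*|$ and $|\Omega|$, so one can choose $r>0$ with $\tilde A:=A^*\cup(\Omega\cap B_r(0))\in\A_p(\Omega)$ of measure $c$; the monotonicity of $F$ yields $F(\tilde A)\leq F(A^*)\leq\alpha$, so $\tilde A$ solves \eqref{eq:min prob}. The hard part will be the $\gamma_p$-lsc of the measure extension $\widetilde F$: upgrading the lower semicontinuity assumption from $\A_p(\Omega)$ to $\mathcal M_0^p(\Omega)$ is the delicate step in the original Buttazzo--Dal Maso argument, and it is here that the interplay between monotonicity of $F$ and the nonlinear resolvent convergence (Theorem~\ref{DM6.8}) must be used carefully.
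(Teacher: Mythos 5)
Your overall architecture is sound and is indeed the Buttazzo--Dal Maso scheme that the paper follows, but the proposal leaves unproven precisely the step that constitutes the entire substance of the paper's Section~5. You reduce everything to the $\gamma$-lower semicontinuity of the extension $\widetilde F(\nu)=F(\{w_\nu>0\})$ on $\mathcal M_0^p(\Omega)$, and then flag this as ``the delicate step'' to be ``used carefully''. But this statement is not a routine consequence of the hypotheses: the hypothesis only gives lower semicontinuity of $F$ along sequences $A_m\gto A$ whose $\gamma$-limit is again of the form $\infty_A$, whereas the $\gamma$-limit $\mu$ of $\infty_{A_m}$ is in general a genuine measure, not of that form. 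Deducing $F(A^*)\leq\liminf_m F(A_m)$ with $A^*=\{w_\mu>0\}$ therefore requires constructing an auxiliary sequence of \emph{quasi-open sets} that $\gamma_p$-converges to $A^*$ and along which $F$ does not increase. In the paper this is done by introducing the truncated sets $A^{\eps}=\{w_{A}>\eps\}$, proving via a $\Gamma$-convergence argument (Lemma~\ref{support}) that weak $W^{1,\,p}_0$-limits of functions supported in $A_h$ vanish q.e.\ on $\{w=0\}$, and then showing by a diagonal argument that $w_{A_{h_i}\cup A^{\eps_i}}\to w_{A}$ strongly in $W^{1,\,p}_0(\Omega)$, so that $A_{h_i}\cup A^{\eps_i}\gto A$ and monotonicity plus the lsc hypothesis close the loop (Lemma~\ref{lower semicontinuity}). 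None of this appears in your proposal, so as it stands the central inequality $F(A^*)\leq\alpha$ is asserted rather than proved.

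Two smaller points. First, your chain $\liminf_m\widetilde F(\infty_{A_m})=\liminf_m F(A_m)$ needs $\{w_{A_m}>0\}=A_m$ up to sets of zero $p$-capacity; since $\{w_{A_m}>0\}\subseteq A_m$ and $F$ is decreasing, you only get $\widetilde F(\infty_{A_m})\geq F(A_m)$, which is the wrong direction for the $\liminf$ (the discrepancy comes from quasi-connected components of $A_m$ of zero Lebesgue measure, where $w_{A_m}$ vanishes identically). This is repairable but should be addressed. Second, the measure inequality $\infty_{A^*}\leq\mu$ is unnecessary for your argument, since $\widetilde F(\mu)=F(\{w_\mu>0\})=F(A^*)$ by definition; invoking it only adds an unverified claim. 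The final enlargement of $A^*$ to a set of measure exactly $c$ and the use of monotonicity there are fine and match the paper. Structurally you differ from the paper in that you compactify in the Dal Maso--Murat measure space, while the paper works instead with the convex set $K$ of subsolutions of $-\lap_pu=1$ and the $L^p$-lower semicontinuous envelope $G$ of $J(w)=\inf\{F(A):w_A\le w\}$; these are equivalent packagings, but in either one the proof cannot be completed without the analogue of Lemmas~\ref{support} and~\ref{lower semicontinuity}.
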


For every $A\in \A_p(\Omega)$, we set $w_A := \Rsp{A}(1)$.  We claim that   $w_A$ is a subsolution of the equation $-\lap_pu= 1$
 in $\Om$. This is the content of  the following 
lemma. The proof is similar to the one given in  \cite[p. 190]{Bu-DM}, for the case $p=2$. For the reader's convenience, we provide the proof in the Appendix.
\begin{Lem}[(Comparison principle)]\label{lem:p subtorsion} Let $\Om\subset\R^n$ be a bounded open set and  $A\in \A_p(\Om)$. Then $ w_A = \Rsp{A}(1)$ is a   subsolution 
 of the equation $-\lap_pu= 1$
 in $\Om$, i.e.
 \begin{equation}\label{eq:p subtorsion}
\int_\Om|\gr w_A|^{p-2}\gr w_A\cdot\gr \varphi\,dx\leq \int_\Om\varphi\,dx 
\end{equation}
for all nonnegative functions $\varphi\in W^{1,\,p}_0(\Om)$. Moreover, if 
$ w \in W^{1,\,p}_0(\Om)$ is another  subsolution satisfying \eqref{eq:p subtorsion} and  such  that    
$ w \leq 0$ q.e. on $\ \Om \mns A$, then $ w_A \geq w$ q.e. in $\ \Om$.
\end{Lem}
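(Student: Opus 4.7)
The plan is to prove the subsolution property via a truncation test function and the comparison assertion via standard monotonicity applied to the positive part $(w-w_A)^+$. As a preliminary I would first note that $w_A\geq 0$ q.e.\ in $\Omega$: testing the equation $-\lap_pw_A=1$ on $A$ with $w_A^-\in W^{1,\,p}_0(A)$ gives
$$
-\int_{\{w_A<0\}}|\gr w_A|^p\,dx\,=\,\int_Aw_A^-\,dx\,\geq 0,
$$
so both sides vanish and $w_A^-=0$. Here, and throughout, I work with quasi-continuous representatives, so all pointwise statements in $\Omega\setminus A$ hold q.e.

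For the subsolution property, let $\varphi\in W^{1,\,p}_0(\Omega)$ with $\varphi\geq 0$, and for $k\in\N$ set $\varphi_k:=\min\{\varphi,kw_A\}$. Since $w_A=0$ q.e.\ in $\Omega\setminus A$ and $\varphi\geq 0$, we have $\varphi_k=0$ q.e.\ in $\Omega\setminus A$, so $\varphi_k\in W^{1,\,p}_0(A)$. Testing the equation $-\lap_pw_A=1$ in $A$ with $\varphi_k$ and splitting the integral along $\{\varphi<kw_A\}$ and its complement gives
$$
\int_{\{\varphi<kw_A\}}|\gr w_A|^{p-2}\gr w_A\cdot\gr\varphi\,dx\,+\,k\!\int_{\{\varphi>kw_A\}}\!|\gr w_A|^p\,dx\,=\,\int_\Omega\varphi_k\,dx\,\leq\,\int_\Omega\varphi\,dx.
$$
The second term on the left is nonnegative, while on $\{w_A=0\}$ we have $\gr w_A=0$ a.e. Since $\{\varphi<kw_A\}\nearrow\{w_A>0\}$ as $k\to\infty$, dominated convergence yields the desired inequality \eqref{eq:p subtorsion}.

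For the comparison part, let $w\in W^{1,\,p}_0(\Omega)$ be a subsolution with $w\leq 0$ q.e.\ in $\Omega\setminus A$. Then $(w-w_A)^+\in W^{1,\,p}_0(\Omega)$ and, because $w_A\geq 0$ and $w\leq 0$ q.e.\ in $\Omega\setminus A$, it vanishes q.e.\ outside $A$, so it belongs to $W^{1,\,p}_0(A)$. I would use it as a test function in the equation for $w_A$ on $A$ and in the subsolution inequality for $w$ on $\Omega$, then subtract to obtain
$$
\int_{\{w>w_A\}}\bigl(|\gr w|^{p-2}\gr w-|\gr w_A|^{p-2}\gr w_A\bigr)\cdot(\gr w-\gr w_A)\,dx\,\leq\,0.
$$
The monotonicity inequality for the vector field $\xi\mapsto|\xi|^{p-2}\xi$ (Lemma~\ref{lem:ineq}) forces $\gr(w-w_A)^+=0$ a.e., and since $(w-w_A)^+\in W^{1,\,p}_0(\Omega)$ on the bounded set $\Omega$, Poincar\'e's inequality gives $(w-w_A)^+\equiv 0$, i.e.\ $w\leq w_A$ q.e.

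The main subtlety will be ensuring that the truncated test functions $\min\{\varphi,kw_A\}$ and $(w-w_A)^+$ genuinely lie in $W^{1,\,p}_0(A)$, which relies on the fact that $w_A=0$ q.e.\ outside $A$ and that the zero-set identity $\gr w_A=0$ a.e.\ on $\{w_A=0\}$ allows us to recover the full integral after the limit $k\to\infty$; everything else reduces to standard monotonicity for the $p$-Laplacian.
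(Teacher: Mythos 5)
Your proof is correct, but it follows a genuinely different route from the paper's. The paper proves both assertions at once through an obstacle-problem formulation: it introduces the solution $u_K$ of the variational inequality \eqref{eq:var ineq} over the convex set $K=\set{v\in W^{1,\,p}_0(\Om)}{v\leq 0 \text{ q.e. in }\Om\setminus A}$, shows by a comparison argument that $u_K$ dominates every subsolution lying in $K$ (whence $u_K\geq 0$, so $u_K\in W^{1,\,p}_0(A)$ and $u_K=w_A$ by uniqueness), and then reads off \eqref{eq:p subtorsion} by taking $v=w_A-\psi$ as a competitor. You instead prove the two claims separately and directly: the subsolution property via the truncations $\varphi_k=\min\{\varphi,kw_A\}$, which are legitimate test functions in $W^{1,\,p}_0(A)$ because $w_A=0$ q.e. outside $A$ and $\varphi\geq0$ q.e., followed by monotone exhaustion of $\{w_A>0\}$ and the fact that $\gr w_A=0$ a.e. on $\{w_A=0\}$; and the comparison via the test function $(w-w_A)^+\in W^{1,\,p}_0(A)$ together with the strict monotonicity of $\xi\mapsto|\xi|^{p-2}\xi$ from Lemma~\ref{lem:ineq} and Poincar\'e's inequality. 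Your argument is more elementary and self-contained, avoiding the existence theory for variational inequalities (Kinderlehrer--Stampacchia) that the paper invokes; the paper's route is slightly slicker in that the identification $u_K=w_A$ packages the maximality of $w_A$ among subsolutions in $K$ and the subsolution property into a single object. All the delicate points in your version (membership of the truncated test functions in $W^{1,\,p}_0(A)$ per Definition~\ref{def:$p$-quasi sobolev}, the passage from an a.e. to a q.e. inequality between quasi-continuous representatives) are handled or at least correctly flagged, so I see no gap.
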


\subsection{The main construction}
 Following \cite{Bu-DM}, we now fix a closed convex subset $K \subset W^{1,\,p}_0(\Om) $ defined by imposing an obstacle condition. Precisely, we set
 $$
 K:  = \set{w\in W^{1,\,p}_0(\Om)}{w\geq 0,\ -\lap_p w-1\le 0}. 
 $$
From  Lemma~\ref{lem:p subtorsion} we have that $w_A\in K$ for every $A\in \A_p(\Om)$. Moreover, if $w\in K$, multiplying
the inequality $-\lap_p w\le 1 $ by $w$, we get
\begin{equation}\label{equ 1}
\int_{\Omega} |\gr w|^p\,dx\leq  \int_{\Omega} w \, dx.
\end{equation}
Thus, by the Poincar\'e inequality we conclude that $K$ is bounded in $W^{1,\,p}_0(\Om)$ and compact in $L^p(\Omega)$. At this point, still following
 \cite{Bu-DM}, we define an auxiliary functional $G:K \to \R$ and 
reduce  the proof of the existence of a minimizer of the problem \eqref{eq:min prob} to  showing  the existence of  a minimizer of the problem
$ \min\set{G(w)}{w\in K,\, |\{w>0\}|\le c }$. Before defining  $G$, we list the properties that we require from this functional. \\

\begin{enumerate}[(i)]
\item $G$ is decreasing, i.e. for every $u,\,v\in K$ with $u\le v$ q.e. in $\Omega$,  then $G(u)\ge G(v)$; \label{i}
\item $G$ is lower semicontinuous on $K$ with respect to the strong topology of $L^p(\Omega)$; \label{ii}
\item $G(w_A)=F(A)$ for every $A\in \A_p(\Omega)$, where $w_A = \Rsp{A}(1)$. \label{iii}\\
\end{enumerate}
\begin{Def}\label{def:G}
{\rm
For every $w\in K$ we set $J(w) = \inf\set{F(A)}{A\in \A_p(\Omega),\, w_A\le w}$ and define $G$ as the $L^p(\Omega)$-lower semicontinuous envelope of $J$. In other words
$G:K \to \R$ is defined by setting 
$$
G(w) =\inf\set{\liminf_{h\to \infty}J(w_h)}{w_h \in K,\ w_h \to w\ \text{in}\ L^p(\Om)}.
$$
}
\end{Def}
We now show that the functional $G$  satisfies  properties 
\ref{i}, \ref{ii} and \ref{iii}. The verification of the first two is relatively easy, as shown in 
the next lemma.
\begin{Lem}\label{i and ii}
The functional $G$  satisfies \ref{i} and \ref{ii}. 
\end{Lem}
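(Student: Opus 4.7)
The plan is to treat the two properties separately, with (ii) essentially automatic from the definition and (i) requiring one nontrivial ingredient from nonlinear potential theory.

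For (ii), I would proceed by the standard diagonal argument that shows a relaxation is itself lower semicontinuous. Fix $w\in K$ and let $w_h\in K$, $w_h\to w$ in $L^p(\Omega)$; I need $G(w)\le\liminf_h G(w_h)$. For each $h$, invoke the definition of $G(w_h)$ to pick a sequence $w_{h,k}\in K$ with $w_{h,k}\to w_h$ in $L^p(\Omega)$ as $k\to\infty$ and $\liminf_k J(w_{h,k})\le G(w_h)+1/h$. Using the strong $L^p$ convergence I would extract a diagonal $w_{h,k(h)}\in K$ with $w_{h,k(h)}\to w$ in $L^p(\Omega)$ and $J(w_{h,k(h)})\le G(w_h)+2/h$ along a suitable subsequence. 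Since $w_{h,k(h)}\in K$ is an admissible sequence in the infimum defining $G(w)$, this yields $G(w)\le\liminf_h G(w_h)$.

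For (i), the starting observation is that $J$ itself is decreasing: if $u\le v$ q.e. in $\Omega$, then $\{A\in\mathcal A_p(\Omega):w_A\le u\}\subset\{A\in\mathcal A_p(\Omega):w_A\le v\}$, so $J(u)\ge J(v)$. To lift this to $G$, suppose $u\le v$ q.e.\ and let $w_h\in K$ with $w_h\to u$ in $L^p(\Omega)$ be an arbitrary admissible sequence for $G(u)$. I would set $\tilde w_h:=\max\{w_h,v\}$ and verify three things: (a) $\tilde w_h\in K$; (b) $\tilde w_h\to v$ in $L^p(\Omega)$; (c) $J(\tilde w_h)\le J(w_h)$. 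Property (b) follows from
$$|\tilde w_h-v|=(w_h-v)^+\le (w_h-u)^+\le|w_h-u|,$$
and (c) is immediate since any $A$ with $w_A\le w_h$ also satisfies $w_A\le\tilde w_h$. Combining (b) with the definition of $G(v)$ and (c) gives $G(v)\le\liminf_h J(\tilde w_h)\le\liminf_h J(w_h)$; taking the infimum over admissible sequences for $G(u)$ yields $G(v)\le G(u)$.

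The only nontrivial point is (a), which is where the main obstacle lies: one needs that the pointwise maximum of two subsolutions of $-\Delta_p u=1$ is again a subsolution. Nonnegativity of $\tilde w_h$ is clear from $w_h,v\ge 0$, but the inequality $-\Delta_p\tilde w_h\le 1$ in the weak sense is the classical \emph{pasting lemma} for the $p$-Laplacian, proved for instance in Heinonen--Kilpel\"ainen--Martio (by duality from the fact that $\min$ of two supersolutions is a supersolution, which itself can be established by testing the supersolution inequalities against $\varphi\chi_{\{u_1\le u_2\}}$ after suitable smooth truncation). Granting this standard fact, $\tilde w_h\in K$ and the argument above completes the proof of~(i).
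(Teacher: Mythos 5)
Your proposal is correct and follows essentially the same route as the paper: property (ii) is the automatic lower semicontinuity of a relaxed functional, and property (i) is obtained by replacing an optimal sequence $w_h\to u$ with $\max\{w_h,v\}$, using that $J$ is decreasing and that the maximum of two subsolutions of $-\Delta_p w=1$ is again a subsolution. The only cosmetic difference is that you justify the convergence $\max\{w_h,v\}\to v$ by the pointwise bound $(w_h-v)^+\le|w_h-u|$ rather than by dominated convergence as in the paper; both are fine.
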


\begin{proof}
 Property \ref{ii} is an immediate consequence of the definition of $G$. 
 
 In order to show \ref{i}, first note that $J$ is decreasing. Fix $u,\,v\in K$ with $u\le v$ q.e. in $\Omega$. Then by the definition of $G(u)$, there exists a sequence of $\{u_h\}\in K$ such that $u_h$ converge strongly in $L^p(\Omega)$ and pointwise a.e. to $u$ and 
$$G(u)=\lim_{h\to \infty}J(u_h).$$
Set $v_h=\max\{v,\,u_h\}$. Then $v_h\in K$ since the maximum of two  subsolutions of the equation $-\lap_pw=1$ is still a  subsolution. Moreover $v_h\to v$ in  in $L^p(\Omega)$ by the dominate convergence theorem and the assumption that $u\le v$ q.e. in $\Omega$. Hence
$$G(v)\le \liminf_{h\to \infty}J(v_h)\le \lim_{h\to \infty}J(u_h)=G(u),$$
which proves \ref{i}. 
\end{proof}

Property \ref{iii} will  follow from Lemma \ref{lower semicontinuity} below. First, 
we recall the definition of   $\Gamma$-convergence and prove the auxiliary Lemma~\ref{support}. This lemma is the $p$-Laplacian counterpart of Lemma 3.2 in \cite{Bu-DM}
and the main result of this section. 
\begin{Def}[($\Gamma$-convergence)]\label{def:Gamma func}
{\rm 
 Let $\Phi_h, \Phi :L^p(\Om) \to \R\cup\{+\infty\}$. We say that the functionals $\Phi_h$ {\it $\Gamma$-converge  in $L^p(\Omega)$ to $\Phi$} if 
 the following two conditions are satisfied:
 \begin{enumerate}
 \item for every $u_h\in L^p(\Om)$ such that $u_h\to u\in L^p(\Om)$, then 
$$\Phi(u)\le \liminf_{h\to \infty} \Phi_h(u_h);$$
  \item for every $u\in L^p(\Om)$ there exists a sequence $u_h\to u$ such that
$$\Phi(u)\ge \limsup_{h\to \infty} \Phi_h(u_h).$$
 \end{enumerate}
This convergence shall be denoted by 
 $\Phi_h\Gto\Phi$. 
 }
\end{Def}
With this definition in hand we are  ready to prove the key lemma of this section.
\begin{Lem}\label{support}
Let $\{A_h\}$ be a sequence in $\A_p(\Omega)$ such that the functions $w_{A_h}$ 
converge weakly to $w$ in $W^{1,\,p}_{0}(\Omega)$ 
and let $u_h$ be a sequence in $W^{1,\,p}_{0}(\Omega)$ such that $u_h=0$ q.e. 
in $\Omega\setminus A_h$. If $u_h \wto u$ in  $W^{1,\,p}_{0}(\Omega)$, then $u=0$ 
q.e. in $\{w=0\}$. 
\end{Lem}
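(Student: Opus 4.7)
The plan is to derive a Hardy-type bound from the Picone identity and the equation satisfied by the $p$-torsion function $w_{A_h}$, and then to pass to the weak limits. First, replacing $u_h$ by $|u_h|$ (which still lies in $W^{1,\,p}_0(A_h)$, is uniformly bounded in $W^{1,\,p}_0(\Omega)$, and converges weakly in $W^{1,\,p}_0(\Omega)$ to $|u|$ along a subsequence), I may assume $u_h\ge 0$. Applying the argument to each truncation $\min\{u_h,M\}$ and letting $M\to+\infty$ at the end, I may further assume that the sequence $(u_h)$ is uniformly bounded in $L^\infty(\Omega)$.

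Fix $\eta>0$. The function $\varphi_h:=u_h^p(w_{A_h}+\eta)^{1-p}$ is bounded and belongs to $W^{1,\,p}_0(A_h)$. Applying the Picone-type identity \eqref{extAH0} to the pair $(u_h,\,w_{A_h}+\eta)$ and integrating by parts using the equation $-\lap_p w_{A_h}=1$ in $A_h$ tested against $\varphi_h$, one obtains the Hardy-type estimate
\begin{equation*}
\int_\Omega \frac{u_h^p}{(w_{A_h}+\eta)^{p-1}}\dx\,\le\,\int_\Omega |\gr u_h|^p\dx\,\le\,C,
\end{equation*}
with $C$ independent of $h$ and $\eta>0$. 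Since $u_h\to u$ and $w_{A_h}\to w$ strongly in $L^p(\Omega)$ by Rellich, hence a.e.\ up to a subsequence, Fatou's lemma gives $\int_\Omega u^p(w+\eta)^{1-p}\dx\le C$ for every $\eta>0$; sending $\eta\to 0^+$ by monotone convergence yields
\begin{equation*}
\int_\Omega \frac{u^p}{w^{p-1}}\dx\,\le\,C\,<\,+\infty,
\end{equation*}
forcing $u=0$ almost everywhere on $\{w=0\}$.

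The main obstacle, and the technical heart of the argument, is upgrading this a.e.\ vanishing to q.e.\ vanishing: the set $\{w=0\}$ may carry positive $p$-capacity while having zero Lebesgue measure, making the a.e.\ conclusion vacuous there. I would complete the proof by exploiting the $p$-fine continuity of quasi-continuous representatives of $u$ and $w$ granted by Theorem~\ref{thm:quasi fine}, in conjunction with the quantitative decay $\int_{\{w<\delta\}} u^p\dx\le C\delta^{p-1}$ furnished by the Hardy bound. At each point $x_0$ of $p$-fine continuity of both $u$ and $w$ with $w(x_0)=0$, every $p$-fine neighbourhood of $x_0$ is contained in $\{w<\delta\}$ for $\delta$ arbitrarily small; combined with the $L^p$-smallness of $u$ there, a capacity-based Wiener-type argument rules out the possibility $u(x_0)\neq 0$. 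Since the set of non-$p$-fine-continuity points of the pair $(u,w)$ has zero $p$-capacity, one concludes that $u=0$ q.e.\ on $\{w=0\}$, as required.
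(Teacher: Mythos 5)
Your first step is sound: applying the Picone inequality \eqref{extAH0} to the pair $(u_h,w_{A_h}+\eta)$ (after reducing to $u_h\geq0$ and bounded) and testing $-\lap_p w_{A_h}=1$ with $\varphi_h=u_h^p(w_{A_h}+\eta)^{1-p}\in W^{1,\,p}_0(A_h)$ does give the uniform Hardy bound $\int_\Omega u_h^p(w_{A_h}+\eta)^{1-p}\dx\leq\int_\Omega|\gr u_h|^p\dx\leq C$, and Fatou plus monotone convergence then yields $\int_\Omega u^pw^{1-p}\dx<\infty$, hence $u=0$ \emph{a.e.} on $\{w=0\}$. But you have correctly identified, and then not closed, the actual content of the lemma: the set $\{w=0\}$ is precisely the set one expects to have zero Lebesgue measure but positive $p$-capacity, so the a.e. statement is vacuous exactly where the conclusion matters. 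More seriously, the upgrade you sketch cannot work with the information you have assembled. The facts you propose to use are: $u,w$ quasi continuous (hence $p$-finely continuous q.e.), $\int_\Omega u^pw^{1-p}\dx<\infty$, and the decay $\int_{\{w<\delta\}}u^p\dx\leq C\delta^{p-1}$. These are all compatible with $u\neq0$ on a set of positive capacity inside $\{w=0\}$: for $p=2$, take $K$ a piece of hyperplane (zero measure, positive capacity), $w=\dist(\cdot,K)^{3/4}$ near $K$ and $u\equiv1$ near $K$; then $w\in W^{1,2}$, $\int u^2/w\simeq\int \dist(\cdot,K)^{-3/4}<\infty$, the decay estimate holds, both functions are finely continuous, yet $u=1$ q.e. on $K=\{w=0\}$. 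A fine neighbourhood of a point $x_0$ with $u(x_0)>0$, $w(x_0)=0$ has Lebesgue density one at $x_0$, but its measure can still shrink to zero with $\delta$, so no contradiction arises from the decay estimate. The q.e. conclusion therefore needs structural input beyond the Hardy inequality, namely that $w$ is a limit of torsion functions and $u$ a limit of functions vanishing q.e. outside $A_h$.

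The paper supplies that input by an entirely different mechanism. It introduces the functionals $\Phi_h(v)=\frac1p\int_{A_h}|\gr v|^p\dx$ (with value $+\infty$ off $W^{1,\,p}_0(A_h)$), extracts a $\Gamma$-limit $\Phi$, and shows that every $v$ in the effective domain of $\Phi$ is the strong $W^{1,\,p}_0(\Omega)$-limit, as $\eps\to0$, of weak limits $v^\eps$ of resolvents $v_h^\eps=\Rsp{A_h}(f^\eps)$ with $f^\eps\in L^\infty(\Omega)$. The decisive point is the comparison principle of Lemma~\ref{lem:p subtorsion}, which gives the \emph{pointwise q.e.} bound $|v_h^\eps|\leq\|f^\eps\|_{L^\infty(\Om)}^{1/(p-1)}w_{A_h}$, hence $|v^\eps|\leq\|f^\eps\|_{L^\infty(\Om)}^{1/(p-1)}w$ in the limit; this transfers the vanishing of $w$ to $v^\eps$ at the level of capacity, and strong $W^{1,\,p}_0$-convergence preserves q.e. information. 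If you want to salvage your approach, you would need an analogous pointwise-q.e. domination of (approximations of) $u$ by a power of $w$, which is exactly what the integral Hardy bound does not provide.
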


\begin{proof}
Following \cite{Bu-DM}, we define the functionals
$\Phi_h : W^{1,\,p}(\Om) \to \R\cup\{+\infty\}$ by setting 
$$
\Phi_h(v)=\begin{cases}
\displaystyle\frac1p\int_{A_h} |\gr v|^p\, dx \ \  &\text{for}\ \ v\in W^{1,\,p}_{0}(A_h), \\
+\infty \ \ & \text{otherwise}.\ \ 
\end{cases}
$$
By a general compactness result, 
see \cite[Th. 4.18 and Prop. 4.11]{DMM1981}, 
there exists a functional $\Phi :W^{1,p}(\Omega) \to \mathbb R\cup\{+\infty\}$ such that, up to a not relabelled subsequence, $\Phi_h \Gto \Phi$.  Let 
 $\D(\Phi_h)$ and $\D(\Phi)$ be the effective domains of $\Phi_h$ and $\Phi$, 
 respectively, see \eqref{effe}. 
 
Observe that if  $u_h \in W^{1,\,p}_{0}(A_h)$ and $u_h \wto u$ 
weakly in $W^{1,\,p}_{0}(\Omega)$  then by condition (i) in  
Definition \ref{def:Gamma func}, we  get that
$$\Phi(u)\leq \liminf_{h\to \infty} \Phi_h(u_h)
=\liminf_{h\to \infty}\frac1p\int_{\Omega} |\gr u_h|^p\,dx< \infty,$$
hence $u\in \D(\Phi)$. Conversely, for
every $v \in \D(\Phi)$, from Definition~\ref{def:Gamma func} it follows that 
there exists a sequence $v_h\in W^{1,\,p}(\Omega)$ converging strongly in $L^p(\Omega)$ to $v$ 
such that $ \Phi_h(v_h) \to \Phi(v) $. Thus the sequence $v_h$ actually converges weakly to $v$ in $W^{1,\,p}(\Omega)$.

Note that $\Phi$ is convex, actually strictly convex on $\D(\Phi)$. The latter property is proved by observing that if 
$\Phi((z+z')/2)=1/2(\Phi(z)+\Phi(z'))<\infty$, then denoting by $z_h$ and $z_h'$ two sequences, converging in $L^p(\Omega)$ to $z$ and $z'$ respectively, such that $\Phi(z)=\displaystyle\lim_{h\to\infty}\Phi(z_h)$ and $\Phi(z')=\displaystyle\lim_{h\to\infty}\Phi(z_h')$, then we have also that $\Phi((z+z')/2)=
\displaystyle\lim_{h\to\infty}\Phi_h((z_h+z_h')/2)$. Therefore we have in particular that
$$
\lim_{h\to\infty}\bigg[\frac{1}{2}\int_\Omega(|\nabla z_h|^p+|\nabla z_h'|^p)\,dx-\int_\Omega\Big|\frac{\nabla z_h+\nabla z'_h}{2}\Big|^p\,dx\bigg]=0,
$$
from which we easily conclude, using the Clarkson's inequality, that $\nabla z_h-\nabla z_h'\to0$  in $L^p(\Omega)$ and  thus $z=z'$.

In view of the above remarks, the proof of the lemma will be achieved if we prove the following
 \noindent
\\
$\textbf{Claim :}$ For every $v\in \D(\Phi)$, one has $v=0$ q.e. on $\{w=0\}$. \\

To prove the claim, it is enough to assume that $v\in Int(\D(\Phi))$, since the general case follows by approximation. Since  $v\in Int(\D(\Phi))$, the subdifferential $\partial\Phi(v)$ is not empty, hence we may fix $f \in \partial\Phi(v)$. Denoting by
$\inp{.}{.}$  the duality action between $W^{-1,\,p^\prime}(\Omega)$ and 
$ W^{1,\,p}_{0}(\Omega)$, we have 
$$
v=\text{argmin}\set{\Phi(z)-\inp{f}{z}}{z\in W^{1,\,p}_{0}(\Omega)}.
$$
Let us show  that $v$ is the weak limit in $W^{1,\,p}_0(\Omega)$ of a sequence of minimizers of 
the functional $\Phi_h$, i.e., if 
$$
v_h=\text{argmin}\set{\Phi_h(z)-\inp{f}{z}}{z\in W^{1,\,p}_{0}(\Omega)},
$$
then $v_h \rightharpoonup v$ in $W^{1,\,p}_0(\Omega)$. To this end, observe that up to a subsequence
 $v_h \wto \tilde{v}$ weakly in
 $W^{1,\,p}_0(\Omega)$ and that 
$ \tilde{v}\in \D(\Phi)$. 
On the other hand from (2) of Definition~\ref{def:Gamma func}  there exists $w_h \in W^{1,\,p}_{0}(A_h)$ such that $w_h$ converge in $L^p(\Omega)$ to $v$ and $\Phi_h(w_h)\to\Phi(v)$. Moreover, passing possibly to a not relabelled subsequence, we may assume that $w_h\wto v$ weakly in $W^{1,\,p}_0(\Om)$. Thus, by  lower semicontinuity, using the minimality of $v_h$, we get
\begin{align*}
\Phi(\tilde{v}) - \inp{f}{\tilde{v}} \leq \liminf_{h\to \infty}\Phi_h(v_h) - \inp{f}{v_h}
\leq \lim_{h\to \infty}\Phi_h(w_h) - \inp{f}{w_h}\leq \Phi(v) - \inp{f}{v}.
\end{align*}
Therefore by uniqueness  we have $\tilde{v} = v$  and that, up to another not relabelled subsequence, $v_h\wto v$ in $W^{1,\,p}(\Omega)$. At this point, a standard compactness argument shows the convergence of the whole sequence $v_h$.

We now fix $\eps\in(0,1)$ and a function $f^{\eps}\in L^{\infty}(\Omega)$ such 
that $\|f-f^{\eps}\|_{W^{-1,\,p^\prime}(\Omega)}\le \eps$. Then, for every $h$ we denote by  $v_h^{\eps}$ the function $v_h^{\eps}: = \Rsp{A_h}(f^\eps)$. Testing the equations satisfied by $v_h^{\eps}$ and $v_h$ with the  function $v_h^\eps - v_h$ 
and subtracting the two resulting equalities, we have
\begin{equation}\label{eq:vheps}
  \int_{\Om}\big( |\gr v_h^\eps|^{p-2}
  \gr v_h^\eps - |\gr v_h|^{p-2}\gr v_h\big)\cdot( \gr v_h^\eps-\gr v_h)\dx
   =\langle f^\eps-f\,,\ v_h^\eps-v_h\rangle .
\end{equation}
If $p\geq2$, we recall \eqref{eq:str mon1} which combined with \eqref{eq:vheps},  
followed by 
a standard use of Young's inequality and Poincar\'e inequality, yields 
that for some $c(p)>0$
\begin{equation}\label{eq:vheps1}
  \int_\Om |\gr v_h^\eps -\gr v_h|^p\dx
  \leq c(p)\|f-f^{\eps}\|_{W^{-1,\,p^\prime}(\Omega)}^\frac{p}{p-1}
  \leq c(p)\,\eps^\frac{p}{p-1}.
\end{equation}
If $1<p<2$, we recall \eqref{eq:str mon2}, which combined with \eqref{eq:vheps},  
yields the following.
 \begin{align*}
  \int_\Om |\gr v_h^\eps -\gr v_h|^p\dx 
  \ &\leq \big| \langle f^\eps-f\,,\ v_h^\eps-v_h\rangle \big|^\frac{p}{2}
  \Big(\int_\Om (|\gr v_h^\eps|^p+|\gr v_h|^p)\dx\Big)^{1-\frac{p}{2}}\\
  &=\ \big| \langle f^\eps-f\,,\ v_h^\eps-v_h\rangle \big|^\frac{p}{2}
  \Big(\inp{f^\eps}{v_h^\eps} + \inp{f}{v_h}\Big)^{1-\frac{p}{2}}.
 \end{align*}
Since the sequence $v_h$ is bounded in $W^{1,\,p}(\Om)$, we easily get
\begin{equation*}
 \begin{aligned}
  \int_\Om |\gr v_h^\eps -\gr v_h|^p\dx 
  \ &\leq \|f-f^{\eps}\|_{W^{-1,\,p^\prime}(\Omega)}^\frac{p}{2}
  \|v_h^\eps-v_h\|_{W^{1,\,p}_0(\Omega)}^\frac{p}{2}
  \Big(\inp{f^\eps}{v_h^\eps-v_h} + \inp{f+f_\eps}{v_h}\Big)^{1-\frac{p}{2}}\\
  &\leq C\,\eps^\frac{p}{2}
  \Big(\, \|v_h^\eps-v_h\|_{W^{1,\,p}_0(\Omega)} \,+\, 
  \|v_h^\eps-v_h\|_{W^{1,\,p}_0(\Omega)}^\frac{p}{2}\,\Big),
 \end{aligned}
\end{equation*}
for some constant $C$ depending on $\|f\|_{W^{-1,\,p^\prime}(\Omega)}$ and $p$, but  independent of $h$ and $\eps$. Thus, from Young's inequality and Poincar\'e inequality, we have 
\begin{equation}\label{eq:vheps2}
 \begin{aligned}
  \int_\Om |\gr v_h^\eps -\gr v_h|^p\dx 
  & \leq \frac{1}{2}\, \int_\Om|\gr v_h^\eps-\gr v_h|^p\,dx
  + C\big(\,\eps^\frac{p^2}{2(p-1)}\, +\, \eps^p\,\big)\\
  &\leq \frac12\int_\Om |\gr v_h^\eps -\gr v_h|^p\dx 
  + C\eps^p,
 \end{aligned}
\end{equation}
for $0<\eps<1$. Thus, from \eqref{eq:vheps1} and \eqref{eq:vheps2} we may conclude that there exists a positive constant $C$ depending only on $\Omega, \|f\|_{W^{-1,\,p^\prime}(\Omega)}$ and $p$ but independent of $h$ and $\eps$,  such that 
for every $1<p<\infty$, 
$$\|v_h -v_h^{\eps}\|_{W_0^{1,\,p}(\Omega)}
\leq C\eps^{\frac1p}.
$$
Note that for every $\eps\in(0,1)$ there exists a not relabelled subsequence $v^\eps_h$ converging weakly and a.e. to a function $v^\eps\in W^{1,\,p}_0(\Omega)$. Then, from the previous inequality, we have by  lower semicontinuity 
$$\|v -v ^{\eps}\|_{W_0^{1,\,p}(\Omega)}\leq C\eps^{\frac1p}. $$
Note that by the  comparison principle (Lemma~\ref{lem:p subtorsion}) we have 
$|v_h^{\eps}|\le \|f^\eps\|_{L^\infty(\Om)}^{1/(p-1)}\,w_{A_h}$ q.e.\ in $\Omega$, 
hence $|v ^{\eps}|\le \|f^\eps\|_{L^\infty(\Om)}^{1/(p-1)}\,w$ a.e.\ in $\Omega$. In particular, we have that the precise representative of  $v^\eps$ is $0$ q.e. in the set where the precise representative of $w$ vanishes. Then, the claim follows from the strong convergence in $W^{1,\,p}_0(\Omega)$ of $v^\eps$ to $v$. Hence the proof of the lemma is completed.
\end{proof}

\subsection{Proof of Theorem \ref{thm:existence}}
Now we have all the ingredients to prove the existence theorem. The proof 
follows  the one in \cite{Bu-DM} with   some extra difficulties due to the nonlinearity of the $p$-Laplacian.

\begin{Lem}\label{lower semicontinuity}
Let $w_h$ be a sequence in $K$ converging in $L^p(\Omega)$ to $w_A$ for some $A\in \A_p(\Omega)$.
Then 
$$
F(A)\leq \liminf_{h\to \infty}J(w_h).
$$
\end{Lem}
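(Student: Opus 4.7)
By the definition of $J$ as an infimum, for each $h$ I would pick a set $B_h \in \A_p(\Omega)$ with $w_{B_h} \leq w_h$ q.e.\ in $\Omega$ and $F(B_h) \leq J(w_h) + 1/h$. Passing to a subsequence I may assume that $J(w_h)$ converges to $\lambda := \liminf_h J(w_h)$, and (discarding the trivial case $\lambda = +\infty$) that $\lambda$ is finite. The lemma then reduces to showing $F(A) \leq \lambda$.

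\textbf{Main idea.} The plan is to work with the enlarged sets $C_h := A \cup B_h \in \A_p(\Omega)$. By the monotonicity of $F$ we have $F(C_h) \leq F(B_h)$. If, along a further subsequence, one can show that $C_h\gto A$ in $\A_p(\Omega)$, then the $\gamma_p$-lower semicontinuity of $F$ assumed in Theorem~\ref{thm:existence} gives
\[
F(A) \,\leq\, \liminf_{h\to\infty}F(C_h) \,\leq\, \liminf_{h\to\infty}F(B_h) \,\leq\, \lambda,
\]
which is exactly the conclusion of the lemma.

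\textbf{Strategy for $C_h\gto A$.} By Theorem~\ref{DM6.3} this reduces to showing $w_{C_h} \wto w_A$ weakly in $W^{1,p}_0(\Omega)$. The lower bound $w_{C_h} \geq w_A$ q.e.\ is immediate from $A \subseteq C_h$ and the uniqueness part of Lemma~\ref{lem:p subtorsion}. Since $\{w_{C_h}\} \subset K$ is bounded in $W^{1,p}_0(\Omega)$, up to subsequence $w_{C_h} \wto \tilde v \geq w_A$. To obtain the upper bound $\tilde v \leq w_A$ I would first extract, using $L^p$-compactness of $K$, a subsequence with $w_{B_h} \to v$ strongly in $L^p(\Omega)$ and weakly in $W^{1,p}_0(\Omega)$; the hypothesis $w_{B_h} \leq w_h \to w_A$ yields $v \leq w_A$ q.e., hence $v = 0$ q.e.\ on $\Omega\setminus A$. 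Next I would apply Lemma~\ref{support} to $A_h := C_h$: with the constant sequence $u_h := w_A \in W^{1,p}_0(A) \subseteq W^{1,p}_0(C_h)$ one forces $\{\tilde v = 0\} \subseteq \{w_A = 0\}$ up to capacity, while with $u_h := w_{B_h}$ one locates the weak limit $v$ inside the support of $\tilde v$. Finally, passing to the limit in the Euler equation for $w_{C_h}$, tested against $w_A$ and against $w_{C_h}-w_A \in W^{1,p}_0(C_h)$, one expects to identify $\tilde v$ as a solution of $-\Delta_p u = 1$ in $W^{1,p}_0(A)$, hence $\tilde v = w_A$.

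\textbf{Main obstacle.} The nonlinearity of $-\Delta_p$ obstructs direct weak-limit passage in $|\nabla w_{C_h}|^{p-2}\nabla w_{C_h}$, so the crux is to upgrade the weak $W^{1,p}_0$-convergence $w_{C_h} \wto \tilde v$ to strong convergence along a subsequence. This should be achievable through a Minty-type monotonicity argument of the kind encoded in Lemma~\ref{lem:ineq} and already employed in the proof of Lemma~\ref{utile}, combined with the pointwise sandwich $w_A \leq w_{C_h}$ and the asymptotic smallness of $w_{B_h}$ on $\Omega\setminus A$ provided by $v\leq w_A$. Once strong convergence is available, the limit equation identifies $\tilde v$ with $w_A$ and the lemma follows.
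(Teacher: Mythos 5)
Your overall architecture is the same as the paper's: pass to sets $B_h$ almost realizing $J(w_h)$, enlarge them, show the enlarged sets $\gamma_p$-converge to $A$, and conclude by monotonicity and $\gamma_p$-lower semicontinuity of $F$. The gap is in the crucial step, namely proving $w_{C_h}\wto w_A$, which amounts to showing that the weak limit $\tilde v$ of $w_{C_h}$ vanishes q.e.\ on $\Omega\setminus A$ (so that uniqueness in $W^{1,\,p}_0(A)$ can identify it with $w_A$). Your two invocations of Lemma~\ref{support} do not deliver this. In that lemma the conclusion ``$u=0$ q.e.\ on $\{w=0\}$'' concerns the zero set of the limit of the torsion functions of the ambient sets $A_h$; taking $A_h:=C_h$ and $u_h:=w_A$ yields only $w_A=0$ q.e.\ on $\{\tilde v=0\}$, i.e.\ $\{\tilde v=0\}\subseteq\{w_A=0\}$, which is already trivial from the comparison $w_{C_h}\ge w_A$, and taking $u_h:=w_{B_h}$ similarly gives $v=0$ on $\{\tilde v=0\}$ --- both inclusions point the wrong way. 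What you actually need is to apply Lemma~\ref{support} with the sequence $B_h$ (whose torsion functions converge to some $v\le w_A$, hence vanishing off $A$) to a sequence of functions supported in $B_h$; but $w_{C_h}$ is supported in $A\cup B_h$, not in $B_h$, and the obvious cutoff $\mathbbm{1}_{\Omega\setminus A}$ that would kill the part over $A$ is not a Sobolev function. Without this step, $\tilde v$ could a priori be the torsion function of a strictly larger quasi open set, and the concluding Minty argument has nothing to bite on.

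The paper resolves exactly this difficulty by enlarging $B_h$ not by $A$ but by $A^\eps:=\{w_A>\eps\}$, and introducing the Sobolev cutoff $v^\eps=1-\min\{w_A,\eps\}/\eps$, which equals $0$ q.e.\ on $A^\eps$ and $1$ q.e.\ on $\Omega\setminus A$, the transition taking place in the layer $A\setminus A^\eps$. Then $u_h:=\min\{v^\eps,w_{B_h\cup A^\eps}\}$ vanishes q.e.\ outside $B_h$, so Lemma~\ref{support} applied to the sequence $B_h$ shows that the limit $\min\{v^\eps,w^\eps\}$ vanishes q.e.\ on $\Omega\setminus A$, whence $w^\eps=0$ there and $w^\eps\in W^{1,\,p}_0(A)$; a diagonal argument in $\eps$ and a comparison of energies with $(w_A-\eps_i)^+$ then identify the limit with $w_A$, give strong $W^{1,\,p}_0$ convergence, and hence $B_{h_i}\cup A^{\eps_i}\gto A$ via Theorem~\ref{DM6.3}. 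If you want to salvage your version with $C_h=A\cup B_h$, you would still have to route the support identification through some such $\eps$-truncation; as written, the claim $C_h\gto A$ is asserted but not proved.
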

\begin{proof} Without loss of generality we may assume that the above $\liminf$  is indeed a finite limit.
From Definition~\ref{def:G} we have  there exists $A_h\in \mathscr A_p(\Omega)$ such that $w_{A_h}\le w_h$ and  
$$F(A_h)\leq J(w_h)+ 1/h. $$
Thanks to \eqref{equ 1}, the sequence $\{w_{A_h}\}$ is bounded 
in $W^{1,\,p}_0(\Omega)$, and hence there exists $w\in W^{1,\,p}_0(\Omega)$ such that, up a not relabelled subsequence, 
 $w_{A_h}\wto w$ weakly in $W^{1,\,p}_0(\Omega)$, strongly in $L^p(\Omega)$ and a.e. in $\Omega$.
Therefore since $w_{A_h}\le w_h$ a.e., 
we have that also $w\le w_A$ a.e. in $\Omega$.

Let us fix $\eps>0$ and set $A^{\eps}:=\{w_A>\eps\}.$ Clearly $(w_A-\eps)^+\in W^{1,\,p}_0(A^\eps)$. Passing possibly to another not relabelled subsequence we may assume that the functions $w_{A_h\cup A^\eps}$ converge weakly in $W^{1,\,p}(\Omega)$ and strongly in $L^p(\Omega)$ to a function $w^\eps$. Define now
$v^{\eps}:=1-  \min\{w_A,\,\eps \}/\eps.$
Then  $v^{\eps}\in W^{1,\,p}(\Omega)$ and   $0\le v^{\eps}\le 1$ q.e. in $\Omega$, $v^{\eps}=0$ q.e. in $A^{\eps}$,  $v^{\eps}=1$ q.e. in $\Omega\setminus A$. 
Now set
$$u_h=\min\{v^{\eps},\,w_{A_h\cup A^{\eps}}\}.$$ 
Then  $u_h=0$ q.e. on $\Omega\setminus A_h,$ and, up to another not relabelled subsequence,  $u_h$ converge weakly in $W^{1,\,p}_0(\Om)$ and strongly in $L^p(\Om)$ to $\min\{v^{\eps},\,w^\eps\}$. By Lemma~\ref{support} we conclude that $\min\{v^{\eps},\,w^\eps\}=0$ q.e. in $\{w=0\}$ and in particular that $\min\{v^{\eps},\,w^\eps\}=0$ q.e. in $\Omega\setminus A$. In turn, recalling that $v^\eps=1$ in $\Omega\setminus A$ we have that $w^\eps=0$ q.e. in $\Omega\setminus A$, hence $w^\eps\in W^{1,\,p}_0(A)$.

Now, let us take a sequence $\eps_i$ converging to zero and such that $w^{\eps_i}$ converge strongly in $L^p(\Omega)$ and weakly in $W^{1,\,p}_0(\Om)$ to some function $\widetilde w\in W^{1,\,p}_0(A)$. By a standard diagonal argument we may find a subsequence $w_{A_{h_i}\cup A_{\eps_i}}$  also converging to $\widetilde w$ in $L^p(\Omega)$ and weakly in $W^{1,\,p}_0(\Om)$. By the minimality of $w_{A_{h_i}\cup A^{\eps_i}}$, since $(w_A-\eps_i)^+\in W^{1.p}_0(A_{h_i}\cup A^{\eps_i})$ we have
\begin{equation}\label{nuova1}
\frac1p\int_{\Om}|\gr w_{A_{h_i}\cup A^{\eps_i}}|^p\,dx-\int_{\Om}w_{A_{h_i}\cup A^{\eps_i}}\,dx\leq \frac1p\int_{\Om}|\gr(w_A-\eps_i)^+|^p\,dx-\int_{\Om}(w_A-\eps_i)^+\,dx.
\end{equation}
Passing to the limit, by lower semicontinuity we get that 
\begin{equation}\label{nuova2}
\frac1p\int_{\Om}|\gr\widetilde w|^p\,dx-\int_{\Om}\widetilde w\,dx\leq \frac1p\int_{\Om}|\gr w_A|^p\,dx-\int_{\Om}w_A\,dx.
\end{equation}
Thus, since $\widetilde w\in W^{1,\,p}_0(A)$, by uniqueness we conclude that $\widetilde w=w_A$. Moreover, combining \eqref{nuova1} and \eqref{nuova2} we have also that   $\|\nabla w_{A_{h_i}\cup A^{\eps_i}}\|_{L^p(\Omega)}\to\|\nabla w_{A}\|_{L^p(\Omega)}$, hence $w_{A_{h_i}\cup A^{\eps_i}}$ converges strongly in $W^{1,\,p}_0(\Om)$ to $w_A$. In turn, Theorem~\ref{DM6.3} implies  that 
 $A_{h_i}\cup A^{\eps_{i}}$ $\gamma_p$-converges to $A$. Thus, using the lower semicontinuity of $F$ we conclude that
$$F(A)\leq \liminf_{i\to \infty} F(A_{h_i}\cup A^{\varepsilon_{i}})
\leq \liminf_{i\to \infty} F(A_{h_i})\leq \liminf_{h\to \infty}  J(w_h), $$
thereby finishing the proof.
\end{proof}
Combining  the definition of $G$ with Lemma~\ref{lower semicontinuity}, we conclude that $G$ satisfies \ref{iii}, and hence it is the desired functional. 
\begin{proof}[of Theorem~\ref{thm:existence}]
First, we observe that if $0< c\leq|\Omega|$ the following problem has a solution
$$\inf \{G(w)\colon w\in K,\, |\{w>0\}|\le c\}.$$
Indeed, if $w_h$ is a minimizing sequence, from \eqref{equ 1} it follows that $w_h$ is bounded in $W^{1,\,p}_0(\Om)$. Therefore, up to a not relabelled subsequence, we may assume that $w_h$ converges strongly in $L^p(\Omega)$ and weakly in $W^{1,\,p}_0(\Omega)$ to a function $w_0\in W^{1,\,p}_0(\Omega)$. Since $K$ is closed and convex,  by Hahn- Banach theorem it is also weakly closed, hence $w_0\in K$. Moreover, $|\{w_0>0\}|\leq c$.  Thus, by the lower semicontinuity  property \ref{ii},  we conclude that $w_0$ is a minimizer of the above problem.

Denote now by $A_0$ a $p$-quasi open set such that $\{w_0>0\}\subset A_0\subset\Omega$, with $|A_0|=c$. We claim that $A_0$ is a solution to 
$\min\set{F(A)}{A\in \A_p(\Om),\ |A|\le c}$, hence a solution to the problem \eqref{eq:min prob}.
Indeed, by Lemma~\ref{lem:p subtorsion},  $w_0\le w_{A_0}$ q.e.\ in $\Omega$, and then the properties \ref{i} and \ref{iii} of $G$ imply
$$F(A_0)=G(w_{A_0})\le G(w_0).$$
For any $A\in \mathscr A_p(\Omega)$ and $|A|\le c$, we have $w_A\in K$ and $|\{w_A>0\}|\le c$. Hence the minimality  of $w_0$ yields
$$G(w_0)\le G(w_A)= F(A),$$
which implies $F(A_0)\le F(A)$. By the arbitrariness of $A$ the claim follows and the proof is complete. 
\end{proof}
\begin{Rem}\label{rem5}
{\rm All the statements in this section and in the previous one have been given in the context of $p$-quasi open sets, assuming that $1<p\leq n$. However, all the arguments and tools used in the proofs, including the characterization of the second eigenvalue given by Theorem~\ref{thm:Mpass}, do apply without changes also when $p>n$ and $\mathcal A_p(\Omega)$ reduces to the family of open sets contained in $\Omega$. Therefore both the lower semicontinuity results of Section~4 and Theorem~\ref{thm:existence} still hold in this case.}
\end{Rem}
\section{Appendix}

First, for the reader's convenience, we provide some details 
on the behavior of quasi 
open sets, when restricted to lines parallel to axis. These are useful for 
the purpose of Lemma \ref{lem:quasi res}. 
Let $\Upsilon$ be the set of all compact rectifiable curves 
$\gamma: [0,1] \to \R^n$. Given any family of curves $\Gamma \subset \Upsilon$, the 
$p$-{\it modulus} of the family $\Gamma$ is defined as 
\begin{equation}\label{eq:pmod}
 M_p(\Gamma):= \inf\left\{\int_{\R^n} \rho(x)^p\dx\ |\ \rho:\R^n\to [0,\infty]\ 
 \text{is Borel}, 
 \ \int_\gamma\rho\, ds\geq 1\ \ \forall \gamma\in\Gamma\right\}
\end{equation}
for every $1\leq p<\infty$. 
The notion of $p$-modulus appeared first in \cite{Fuglede1} and later on was extended 
 in the framework of general metric spaces in \cite{Hein-Kos}. It is easy to see that $M_p$ is 
an outer measure on $\Upsilon$.
\begin{Lem}\label{lem:a1}
Let $A\subset\R^n$ be a $p$-quasi open set. Then the set $\inv{\gamma}(A)$ 
 is open in $[0,1]$ for $M_p$-a.e. rectifiable curve $\gamma\in \Upsilon$.
\end{Lem}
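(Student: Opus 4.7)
The plan is to use Theorem~\ref{thm:quasi fine}(iii) to represent $A=\{u>0\}$ for some nonnegative $p$-quasi continuous function $u\in W^{1,p}(\R^n)$. With this representation, the lemma reduces to showing that $u\circ\gamma$ is continuous on $[0,1]$ for $M_p$-a.e. rectifiable curve $\gamma$, since then $\gamma^{-1}(A)=\{t\in[0,1]:u(\gamma(t))>0\}$ is automatically an open subset of $[0,1]$.

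To establish this continuity statement I would invoke the classical comparison between $p$-capacity and $p$-modulus: for every set $E\subset\R^n$, the family $\Gamma_E$ of rectifiable curves whose image meets $E$ satisfies $M_p(\Gamma_E)\leq C\,\pcap(E)$ for a dimensional constant $C$. Granted this, by the definition of quasi continuity there exist open sets $V_j$ with $\pcap(V_j)<2^{-j}$ such that the restriction of $u$ to $\R^n\setminus V_j$ is continuous. The countable subadditivity of $M_p$ combined with a Borel--Cantelli type argument then implies that the exceptional family $\Gamma^{\ast}:=\bigcap_{k\geq1}\bigcup_{j\geq k}\Gamma_{V_j}$ has $M_p(\Gamma^{\ast})=0$. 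For every $\gamma\in\Upsilon\setminus\Gamma^{\ast}$, the trace $\gamma([0,1])$ is contained in $\R^n\setminus V_j$ for all sufficiently large $j$, and hence $u\circ\gamma$ is continuous on $[0,1]$, which finishes the argument.

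The main technical input is thus the comparison estimate $M_p(\Gamma_E)\leq C\,\pcap(E)$. This is a classical result going back to Fuglede and follows directly from the definition \eqref{eq:pmod}: given $v\in W^{1,p}(\R^n)$ with $v\geq1$ on an open neighborhood of $E$, one verifies that $\rho:=|v|+|\gr v|$, after replacing $v$ by a representative absolutely continuous on lines, is admissible for $\Gamma_E$ up to a universal multiplicative constant. Minimizing over such $v$ then yields the claimed bound, and this is the only step where one needs to be careful with representatives.
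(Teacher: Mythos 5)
Your reduction is the right one and is essentially the standard route (the paper does not prove this lemma itself but refers to \cite[Remark 3.5]{Shan}, which argues along the lines of your first paragraph): writing $A=\{u>0\}$ with $u\geq0$ quasi continuous and showing that $u\circ\gamma$ is continuous for $M_p$-a.e.\ $\gamma$ does give at once that $\inv{\gamma}(A)$ is open. The gap is in your ``main technical input''. The inequality $M_p(\Gamma_E)\leq C\,\pcap(E)$ is \emph{false} when $\Gamma_E$ is the family of \emph{all} rectifiable curves whose image meets $E$ and $E$ is a nonempty open set. Indeed, if $B_r(x_0)\subset E$ and $\rho$ is admissible for $\Gamma_E$, then every segment of length $\ell$ contained in $B_{r/2}(x_0)$ lies in $\Gamma_E$, so the average of $\rho$ over each such segment is at least $1/\ell$; letting $\ell\to0$ and using Lebesgue differentiation along lines forces $\rho\notin L^1_{\loc}(B_{r/2}(x_0))$, hence $M_p(\Gamma_E)=+\infty$ no matter how small $\pcap(E)$ is. For the same reason your candidate $\rho=|v|+|\gr v|$ is not admissible: on a very short curve inside $\{v\geq1\}$ both $\int_\gamma|v|\,ds$ and $\int_\gamma|\gr v|\,ds$ can be arbitrarily small. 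Since each $V_j$ is a nonempty open set, every $M_p(\Gamma_{V_j})$ equals $+\infty$ and the Borel--Cantelli step collapses.

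The argument can be repaired by stratifying by length. Fix $m\in\N$ and consider only curves of length at least $1/m$. For such a curve meeting $E$, with $v\geq1$ on a neighbourhood of $E$, either the curve stays in $\{v>1/2\}$, so $\int_\gamma|v|\,ds\geq1/(2m)$, or it reaches a point where $v\leq1/2$, so $\int_\gamma|\gr v|\,ds\geq1/2$ (this upper-gradient inequality holds for all curves outside an $M_p$-null family, which is the representative issue you already flagged). Hence $2\max\{m,1\}\,(|v|+|\gr v|)$ is admissible and $M_p\big(\Gamma_E\cap\{\mathrm{length}\geq1/m\}\big)\leq C(m,p)\,\pcap(E)$. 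Borel--Cantelli then applies for each fixed $m$, a countable union over $m$ exhausts all nonconstant curves, and constant curves are harmless since for them $\inv{\gamma}(A)$ is either $[0,1]$ or $\emptyset$. With this correction the proof closes; note that the only curves actually used downstream (Lemma~\ref{lem:a2}, Corollary~\ref{cor:addendum}) are segments of length bounded below, for which your original comparison is valid.
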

 For a proof of the above lemma, we refer to 
\cite[Remark 3.5]{Shan}. 

\begin{Lem}\label{lem:a2}
 Let $E \subset \R^{n-1}$ be a Borel set  and 
$$
  \Gamma_{E} :=\ \{\gamma_{x'}: [0,1] \to E\times [0,1]
 \ |\ x'\in E,\ \gamma_{x'}(t) = (x',t)\}.
$$
If $p>1$, then we have $M_p(\Gamma_E) =0$ if and only if 
  $\mathcal{L}^{n-1}(E) =0$.
\end{Lem}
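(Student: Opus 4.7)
The proof splits naturally into the two directions of the equivalence.

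For the implication $\mathcal L^{n-1}(E)=0\Rightarrow M_p(\Gamma_E)=0$, the plan is to use outer regularity of Lebesgue measure on Borel sets. Given $\eps>0$, I would choose an open set $U\supset E$ in $\R^{n-1}$ with $\mathcal L^{n-1}(U)<\eps$, and take as admissible density $\rho_\eps:=\chi_{U\times[0,1]}$. For every $x'\in E\subset U$ one has $\int_{\gamma_{x'}}\rho_\eps\,ds=\int_0^1\chi_U(x')\,dt=1$, so $\rho_\eps$ is admissible in \eqref{eq:pmod}. Moreover
$$
\int_{\R^n}\rho_\eps^p\,dx=\mathcal L^{n-1}(U)<\eps,
$$
which forces $M_p(\Gamma_E)=0$. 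Notice this direction works for every $p\geq1$.

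For the harder implication $M_p(\Gamma_E)=0\Rightarrow \mathcal L^{n-1}(E)=0$, the key tool is Hölder's inequality, which is where the hypothesis $p>1$ enters. Let $\rho$ be any admissible density for $\Gamma_E$. Then for every $x'\in E$, using $\int_0^1 1\,dt=1$, Hölder gives
$$
1\,\leq\,\int_0^1\rho(x',t)\,dt\,\leq\,\Bigl(\int_0^1\rho(x',t)^p\,dt\Bigr)^{1/p},
$$
so $\int_0^1\rho(x',t)^p\,dt\geq1$ for every $x'\in E$. Integrating over $E$ and applying Fubini's theorem (here one uses that $\rho$ is Borel and nonnegative, so no measurability issue arises), we obtain
$$
\int_{\R^n}\rho^p\,dx\,\geq\,\int_E\int_0^1\rho(x',t)^p\,dt\,dx'\,\geq\,\mathcal L^{n-1}(E).
$$
Taking the infimum over all admissible $\rho$ yields $M_p(\Gamma_E)\geq \mathcal L^{n-1}(E)$. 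Hence $M_p(\Gamma_E)=0$ forces $\mathcal L^{n-1}(E)=0$, completing the proof.

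The only subtle point is ensuring Hölder's inequality is valid for arbitrary Borel densities (one reduces by monotone convergence to the case where $\rho$ is bounded if desired), but this is essentially routine. The fact that the bound $M_p(\Gamma_E)\geq \mathcal L^{n-1}(E)$ breaks down at $p=1$ (since Hölder degenerates) is consistent with the failure of the conclusion for $p=1$, where one can concentrate mass on a single horizontal slice at negligible $L^1$-cost.
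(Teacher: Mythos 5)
Your proof is correct, and both directions rest on the same two ideas as the paper's proof: an indicator density for the easy implication, and H\"older plus Fubini--Tonelli for the hard one. The differences are worth noting. For the easy direction you invoke outer regularity and use $\chi_{U\times[0,1]}$, which works but is more than needed: since $E$ is Borel with $\mathcal L^{n-1}(E)=0$, the paper simply takes $\rho=\mathbbm{1}_{E\times[0,1]}$, which is already admissible and has $L^p$-norm equal to zero. For the hard direction you apply H\"older fiberwise in the $t$-variable and then integrate over $E$, obtaining the clean quantitative bound $M_p(\Gamma_E)\geq\mathcal L^{n-1}(E)$; the paper instead applies H\"older jointly on $K\times[0,1]$ for compact $K\subset E$ and concludes via inner regularity. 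Your version is slightly sharper and avoids the compact exhaustion. One caveat: your closing aside about $p=1$ is wrong. The fiberwise estimate $\int_0^1\rho(x',t)\,dt\geq 1$ followed by integration over $E$ gives $M_1(\Gamma_E)\geq\mathcal L^{n-1}(E)$ with no H\"older needed at all, and concentrating mass on a slab $E\times[0,\delta]$ with density $1/\delta$ costs exactly $\mathcal L^{n-1}(E)$ in $L^1$, not a negligible amount. So the equivalence does not in fact fail at $p=1$; the hypothesis $p>1$ in the lemma reflects the paper's standing assumptions rather than a genuine obstruction. This does not affect the validity of your proof of the stated lemma.
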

\begin{proof}
 Let $M_p(\Gamma_{E}) =0$. From the definition \eqref{eq:pmod} for every $m\in \mathbb{N}$ there   exists $\rho_m:\R^n\to [0,\infty]$ such that 
 $\|\rho_m\|_{L^p(\R^n)}^p \leq 1/m $ and $\int_\gamma \rho_m\, ds \geq 1$ for every $\gamma\in \Gamma_{E}$. Hence, if $K$ is a compact subset of $E$, we have 
$$ \mathcal{L}^{n-1}(K)\leq \int_K \int_0^1 \rho_m(x',t)\,dt\,dx'
\leq \mathcal{L}^{n-1}(K)^{1-\frac{1}{p}}\Big(\int_{\R^n} \rho_m(x)^p\dx\Big)^\frac{1}{p}$$
which implies $\mathcal{L}^{n-1}(K)\leq 1/m$  for all $m\in\N$, hence 
$\mathcal{L}^{n-1}(K) =0$. This proves that  $\mathcal{L}^{n-1}(E) =0$.

The converse follows  by observing that if $\mathcal{L}^{n-1}(E) =0$, then the function
$\rho_E = \mathbbm{1}_{E\times [0,1]}$ satisfies  $\int_\gamma\rho_{E}\,ds=1$ for all $\gamma\in\Gamma_{E}$ and  $\|\rho_E\|_{L^p(\R^n)} =0$.
\end{proof}

\begin{Cor}\label{cor:addendum}
Let $A\subset\R^n$ be a $p$-quasi open set. Then 
$A_{x^\prime}:=\set{t\in\R}{(x',t)\in A}$ is an open set for $\mathcal L^{n-1}$-a.e. $x'\in\R^{n-1}$. 
\end{Cor}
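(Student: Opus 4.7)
The plan is to combine Lemmas \ref{lem:a1} and \ref{lem:a2}, applied to a countable collection of translated vertical segments that together cover every line $\{x'\}\times\R$. Concretely, for each $k\in\Z$ introduce the family $\Gamma_k=\{\gamma_{x',k}:x'\in\R^{n-1}\}$, where $\gamma_{x',k}:[0,1]\to\R^n$ is the unit segment $\gamma_{x',k}(t):=(x',t+k/2)$.

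By Lemma \ref{lem:a1}, the collection $\mathcal B\subset\Upsilon$ of curves $\gamma$ for which $\gamma^{-1}(A)$ fails to be open in $[0,1]$ satisfies $M_p(\mathcal B)=0$. Setting
\[
E_k:=\bigl\{x'\in\R^{n-1}:\gamma_{x',k}\in\mathcal B\bigr\},
\]
the subfamily $\{\gamma_{x',k}:x'\in E_k\}$ is contained in $\mathcal B$, hence also has vanishing $p$-modulus. The proof of Lemma \ref{lem:a2} uses only the translation invariance of Lebesgue measure together with H\"older's inequality, so it adapts verbatim to these translated families; the conclusion is $\mathcal L^{n-1}(E_k)=0$ for every $k\in\Z$.

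Let $E:=\bigcup_{k\in\Z}E_k$, which is $\mathcal L^{n-1}$-null. For any $x'\notin E$ and any $k$, $\gamma_{x',k}^{-1}(A)=(A_{x'}-k/2)\cap[0,1]$ is open in $[0,1]$, which means $A_{x'}\cap[k/2,k/2+1]$ is open in the relative topology of $[k/2,k/2+1]$. To deduce that $A_{x'}$ is open in $\R$, pick $t_0\in A_{x'}$: if $t_0\notin \frac{1}{2}\Z$, then $t_0$ lies in the interior of $[k/2,k/2+1]$ for some $k$ and relative openness directly supplies a two-sided neighbourhood; if $t_0=k/2\in\frac{1}{2}\Z$, apply relative openness on each of the adjacent segments $[k/2-1/2,k/2]$ and $[k/2,k/2+1]$ and paste the two resulting one-sided neighbourhoods.

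There is no genuine obstacle in this argument; the only point worth verifying is that Lemma \ref{lem:a2} survives translation, which is immediate from its proof. The half-integer step length is chosen purely to ensure that every real number lies in the interior of at least one of the covering intervals.
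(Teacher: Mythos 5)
Your proof is correct and takes essentially the same approach as the paper, which covers the vertical line by the nested intervals $[-h,h]$, $h\in\N$, instead of your overlapping translates of $[0,1]$, and likewise applies Lemma~\ref{lem:a1} together with a translated (resp.\ rescaled) version of Lemma~\ref{lem:a2} on each segment before taking a countable union of null sets. (Your separate pasting step at half-integers is unnecessary, since every $t_0$, including $t_0=k/2$, already lies in the interior of some interval $[j/2,j/2+1]$.)
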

\begin{proof}
 Let A be a quasi open set $A\subset\R^n$. Fix  $h\in\mathbb N$ and set
 $$E_h=\left\{x'\in \R^{n-1}\ |\ A\cap\big(\{x'\}\times[-h,h]\big) \,\,\text{is not open in}\,\,\{x'\}\times[-h,h]\right\}.$$ 
 From Lemma \ref{lem:a1}, we have $M_p(\Gamma_{E_h}) =0$, where
 $\Gamma_{E_h}=\{(x^\prime,t): t\in[-h,h]\}$.  Thus from Lemma~\ref{lem:a2} we have $\mathcal{L}^{n-1}(E_h) =0$. Hence, the result  follows.
\end{proof}

Here, we provide the proof of Lemma \ref{lem:p subtorsion}. 

\begin{proof}[of Lemma \ref{lem:p subtorsion}]
Given a closed, convex subset $K \subset W^{1,\,p}_0(\Om)$, we denote by
 $u_K \in K$ a solution of the following variational inequality
 \begin{equation}\label{eq:var ineq}
  \int_{\Omega}|\nabla u_K|^{p-2} 
 \nabla u_K\cdot \nabla (v-u_K)\, dx\geq \int_{\Omega} (v-u_K)\, dx
 \end{equation}
 for all $v\in K$.  Let us now define the operator $\mathcal L: W^{1,\,p}_0(\Om)\to W^{-1,\,p^\prime}\!(\Om)$ setting $\mathcal Lu:=-\Delta_pu$ for all $u\in W^{1,\,p}_0(\Omega)$. Using Lemma~\ref{lem:ineq}, it is immediate to check that $\mathcal L$ satisfies all the monotonicity and coercivity assumptions that guarantee the existence of a solution of the variational inequality \eqref{eq:var ineq}, see  Corollary 1.8 in Ch. III of \cite{Kind-Stamp}.
 
Given $A\in\mathcal A_p(\Omega)$, let us now choose
 $$K:= \set{v\in W^{1,\,p}_0(\Om) }{v\leq 0\ \text{ q.e. in $\Omega\mns A$}}.$$ 
Let $w \in W^{1,\,p}_0(\Om)$ be any subsolution of the equation  $-\lap_p\, w\, = 1 $ in $\Om$ such that $w \leq 0$ q.e. in  $\Om\mns A$. Setting $\varphi:= \min\{u_K-w,0\}\in W^{1,\,p}_0(A)$, using the fact the $w$ is a subsolution, that $\varphi\leq0$  and that $u_K$ satisfies the variational inequality  \eqref{eq:var ineq},  we get
 \begin{align*}
\int_\Om |\gr w|^{p-2}\gr w &\cdot\gr \varphi\,dx
  \geq \int_\Om\varphi\,dx=\int_\Om(u_K-\max\{u_K,w\})\,dx \\
& \geq \int_\Om |\gr u_K|^{p-2}\gr u_K\cdot\gr (u_K-\max\{u_K,w\})\,dx  =\int_\Om |\gr u_K|^{p-2}\gr u_K\cdot\gr \varphi\,dx. 
\end{align*}
In turn, this equivalently can be written as 
$$
\int_{\{u_K<w\}}\big(|\gr w|^{p-2}\gr w-|\gr u_K|^{p-2}\gr u_K)\cdot(\gr w-\gr u_K)\,dx\leq 0.
$$
Thus, from Lemma~\ref{lem:ineq} we conclude that
 $$ \int_{\{u_K<w\}} |\gr u_K-\gr w|^p \dx =0, $$
hence $|\{u_K<w\}| = 0 $ and thus  $ u_K \geq w$ q.e. in $\Om$. 

In particular, taking $ w = 0$, we have that  $u_K\, \geq\, 0$ q.e. in $\Om$. On the other hand  $u_K \in K$, hence $u_K\leq0$ q.e. in $\Omega\setminus A$. Thus, recalling Definition~\ref{def:$p$-quasi sobolev} we have that  $ u_K \in W^{1,\,p}_0(A)$. At this point, choosing as a test function in \eqref{eq:var ineq} $ v = u_K+\psi$  for any  $\psi \in W^{1,\,p}_0(A)$, 
 we get that  $u_K$ is a weak solution of $-\lap_pu=1$ in $A$. By uniqueness, 
 $$u_K=w_A.$$
 In a similar way, choosing $ v = w_A - \psi$ in \eqref{eq:var ineq} 
 for any nonnegative 
 $\psi\in W^{1,\,p}_0(\Om) $ yields that $w_A$ is a  subsolution in $\Om$ of the equation  $-\lap_pu=1$. Hence, the proof is complete.
\end{proof}
We conclude this section by recalling some well known inequalities in the following technical lemma. These have been applied in various places  in the preceeding sections.
For the proof we refer the interested reader to the paper  \cite{Lind1}.

\begin{Lem}\label{lem:ineq} Let $ 1<p<\infty$.  There exists $c(p) >0$ such that
\begin{align*}
(i)\quad&|\xi|^p  - |\eta|^p -p\,|\eta|^{p-2}\eta \cdot(\xi-\eta)\geq  c(p)
\begin{cases}
 |\xi-\eta|^2(|\xi|+|\eta|)^{p-2}\ \ &\text{if}\ \ 1<p<2\\
 |\xi-\eta|^p \ \ &\text{if}\ \ p\geq 2;
\end{cases}\\
(ii)\quad &\left(|\xi|^{p-2}\xi  - |\eta|^{p-2}\eta\right)\cdot(\xi-\eta) \geq c(p)
\begin{cases}
 |\xi-\eta|^2(|\xi|+|\eta|)^{p-2}\ \ &\text{if}\ \ 1<p<2\\
 |\xi-\eta|^p \ \ &\text{if}\ \ p\geq 2.
\end{cases}
\end{align*}
The second inequality of the above implies, in particular, that there exists $c=c(p)>0$ such that for every $u,v\in W^{1,\,p}(\R^n)$ and a measurable set 
$E\subset\R^n$, if $p\geq2$ then 
we have 
 \begin{equation}\label{eq:str mon1}
 \int_{E} |\gr u -\gr v|^p\dx
  \leq c\int_{E}\big( |\gr u|^{p-2}
  \gr u - |\gr v|^{p-2}\gr v\big)\cdot( \gr u-\gr v)\dx,
 \end{equation}
while if $1<p<2$, we have 
 \begin{align}\label{eq:str mon2}
& \int_{E} |\gr u -\gr v|^p\dx
\\
& \qquad\qquad  \leq c\Big(\int_{E}\big( |\gr u|^{p-2}
  \gr u - |\gr v|^{p-2}\gr v\big)\cdot( \gr u-\gr v)\dx\Big)^\frac{p}{2}
  \Big( \int_{E}(|\gr u|+|\gr v|)^p\dx\Big)^{1-\frac{p}{2}}. \nonumber
 \end{align}
\end{Lem}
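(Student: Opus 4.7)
The plan is to derive both (i) and (ii) from a one-variable Taylor expansion of $\phi(\xi) = |\xi|^p$ along the segment joining $\eta$ and $\xi$. Set $\eta_t := \eta + t(\xi-\eta)$ and $g(t) := |\eta_t|^p$ for $t \in [0,1]$. A direct computation gives $g'(0) = p|\eta|^{p-2}\eta\cdot(\xi-\eta)$ and
$$
g''(t) = p\,|\eta_t|^{p-2}|\xi-\eta|^2 + p(p-2)|\eta_t|^{p-4}\big(\eta_t\cdot(\xi-\eta)\big)^2.
$$
For $p \geq 2$ both summands are nonnegative, so $g''(t) \geq p|\eta_t|^{p-2}|\xi-\eta|^2$; for $1 < p < 2$ the second term is negative, and applying Cauchy--Schwarz $\big(\eta_t\cdot(\xi-\eta)\big)^2 \leq |\eta_t|^2|\xi-\eta|^2$ yields $g''(t) \geq p(p-1)|\eta_t|^{p-2}|\xi-\eta|^2$. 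Thus in either case $g''(t) \geq c(p)|\eta_t|^{p-2}|\xi-\eta|^2$, and Taylor's formula with integral remainder produces
$$
|\xi|^p - |\eta|^p - p|\eta|^{p-2}\eta\cdot(\xi-\eta) \;=\; \int_0^1 (1-t)\,g''(t)\,dt \;\geq\; c(p)|\xi-\eta|^2\int_0^1(1-t)|\eta_t|^{p-2}\,dt.
$$

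The crux of the argument is then to control the one-dimensional integral $\int_0^1(1-t)|\eta_t|^{p-2}\,dt$ from below in the two regimes. For $1<p<2$ the exponent is negative and the bound $|\eta_t| \leq |\xi|+|\eta|$ yields the factor $(|\xi|+|\eta|)^{p-2}$ immediately. For $p \geq 2$ the integrand can vanish when the segment passes near $0$, so I would argue by homogeneity: the inequality is $0$-homogeneous after dividing by $|\xi-\eta|^p$, so one may reduce to $|\xi - \eta| = 1$ and split into the cases $|\eta| \leq 2$ and $|\eta| > 2$; the first case is handled by a compactness/continuity argument on a compact set, the second by $|\eta_t| \geq |\eta|/2$. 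This gives the desired $|\xi-\eta|^p$ bound in (i).

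Inequality (ii) is obtained by symmetrization: writing (i) once and once more with $\xi$ and $\eta$ interchanged and summing cancels the $|\xi|^p$ and $|\eta|^p$ terms, producing exactly $p$ times the scalar product $\big(|\xi|^{p-2}\xi - |\eta|^{p-2}\eta\big)\cdot(\xi-\eta)$ on the left, while the right-hand side gains a factor of $2$. Dividing by $p$ gives (ii) with a renamed constant.

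Finally, the integrated forms follow directly. Applying (ii) pointwise with $\xi = \nabla u(x)$, $\eta = \nabla v(x)$ and integrating over $E$ immediately gives \eqref{eq:str mon1} when $p \geq 2$. When $1<p<2$, integration produces the factor $|\nabla u-\nabla v|^2(|\nabla u|+|\nabla v|)^{p-2}$ under the integral, and \eqref{eq:str mon2} follows by writing $|\nabla u - \nabla v|^p = \big[|\nabla u-\nabla v|^2(|\nabla u|+|\nabla v|)^{p-2}\big]^{p/2}\cdot(|\nabla u|+|\nabla v|)^{p(2-p)/2}$ and using H\"older's inequality with exponents $2/p$ and $2/(2-p)$. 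The main obstacle in the whole scheme is the lower bound on the one-dimensional integral in the $p \geq 2$ case, since the naive upper estimate for $|\eta_t|$ goes the wrong way; once this is overcome by the scaling/case-split above, everything else is routine.
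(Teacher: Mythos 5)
Your proof is correct. Note that the paper does not prove this lemma at all: it simply defers to Lindqvist's paper \cite{Lind1}, so you are supplying a self-contained argument where the authors give none. Your route --- Taylor expansion with integral remainder of $g(t)=|\eta+t(\xi-\eta)|^p$, the two-sided bound $g''(t)\geq \min\{p,\,p(p-1)\}\,|\eta_t|^{p-2}|\xi-\eta|^2$, and then the lower bound on $\int_0^1(1-t)|\eta_t|^{p-2}\,dt$ --- is a standard and complete way to get (i); the symmetrization trick for (ii) and the H\"older step for \eqref{eq:str mon2} are exactly right (the exponents $2/p$ and $2/(2-p)$ are conjugate and the powers of $|\gr u|+|\gr v|$ cancel as you claim). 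Two small points are worth a sentence in a written-up version: first, when the segment $[\eta,\xi]$ passes through the origin, $g$ is not $C^2$ (for $1<p<2$ the second derivative blows up like $|t-t_0|^{p-2}$ at the crossing time $t_0$), but $g'$ is still absolutely continuous and $g''$ is locally integrable since $p-2>-1$, so the integral form of the remainder remains valid; second, in the compactness step for $p\geq2$ you should observe that the integral $\int_0^1(1-t)|\eta_t|^{p-2}\,dt$ is continuous in $(\xi,\eta)$ and strictly positive on the compact set $\{|\xi-\eta|=1,\ |\eta|\leq2\}$ because $\eta_t$ is a nonconstant affine path and hence cannot vanish on a set of positive measure. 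With those remarks your argument is airtight and entirely elementary, arguably more transparent than chasing the reference.
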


\vskip 0.6cm

\centerline{\sc Acknowledgments}
\vskip 0.15cm
\noindent
The authors thank T. Kilpel\"ainen and J. Mal\'{y} for 
providing them with appropriate 
references and valuable suggestions. The authors are also thankful to A. Bj\"orn and J. Bj\"orn, who quickly answered a question asked by one of the authors by proving in \cite{BB} the key result stated in Theorem~\ref{thBB}.

%
\end{document}